\newcounter{todocounter}
\newtheorem{theorem}[subsection]{Theorem}
\newtheorem{lemma}[subsection]{Lemma}
\newtheorem{proposition}[subsection]{Proposition}
\newtheorem{corollary}[subsection]{Corollary}
\theoremstyle{definition}
\newtheorem{definition}[subsection]{Definition}
\newtheorem{example}[subsection]{Example}
\newtheorem{remark}[subsection]{Remark}
\newtheorem{assumption}[subsection]{Assumption}
\numberwithin{equation}{subsection}
\renewcommand{\O}{\mathsf{O}}
\newcommand{\M}{\mathcal{M}}
\newcommand{\Mtoc}{\M^{\fC}}
\newcommand{\msigmanop}{\M^{\sigmaop_n}}
\newcommand{\sigman}{\Sigma_n}
\newcommand{\Mdelta}{\M^{\Delta}}
\newcommand{\rkm}{R_K \M}
\newcommand{\rkmsigmanop}{(\rkm)^{\sigmaop_n}}
\newcommand{\rkmc}{(\rkm)^{\fC}}
\newcommand{\rkmdelta}{(\rkm)^{\Delta}}
\newcommand{\C}{\mathcal{C}}
\newcommand{\D}{\mathcal{D}}
\newcommand{\sF}{\mathscr{F}}
\newcommand{\sI}{\mathscr{I}}
\newcommand{\sJ}{\mathscr{J}}
\renewcommand{\emptyset}{\varnothing}
\renewcommand{\tilde}[1]{\widetilde{#1}}
\DeclareMathOperator{\colim}{colim}
\renewcommand{\hom}{\operatorname{Hom}}
\DeclareMathOperator{\Ho}{Ho}
\newcommand{\po}{\ar@{}[dr]|(.7){\Searrow}}
\newcommand{\pb}{\ar@{}[dr]|(.3){\Nwarrow}}
\newcommand{\Map}{\mathsf{Map}}
\DeclareMathOperator{\map}{map}
\newcommand{\Ch}{\mathsf{Ch}}
\newcommand{\cat}[1]{\mathcal{#1}}
\newcommand{\boxprod}{\mathbin\square}
\tikzset{auto}
\tikzset{empty/.style={circle,inner sep=0pt,minimum size=6mm}}
\tikzset{emptyvt/.style={circle,inner sep=0pt,minimum size=0mm}}
\tikzset{plain/.style={circle,draw,very thick,
inner sep=0pt,minimum size=6mm}}
\tikzset{fatplain/.style={rounded rectangle,draw,very thick,minimum size=6mm}}
\tikzset{bigplain/.style={rounded rectangle,draw,very thick,minimum size=.8cm}}
\tikzset{yellowvt/.style={circle,draw,fill=yellow,very thick,inner sep=0pt,minimum size=6mm}}
\tikzset{bluevt/.style={circle,draw,fill=blue!20,very thick,inner sep=0pt,minimum size=6mm}}
\tikzset{greenvt/.style={circle,draw,fill=green!30,very thick,inner sep=0pt,minimum size=6mm}}
\tikzset{redvt/.style={circle,draw,fill=red!30,very thick,inner sep=0pt,minimum size=6mm}}
\tikzset{arrow/.style={->,thick}}
\tikzset{dashedarrow/.style={->,dashed,thick}}
\tikzset{dottedarrow/.style={->,dotted,thick}}
\tikzset{mapto/.style={|->,thick}}
\tikzset{implies/.style={thick,double,double equal sign distance,-implies}}
\tikzset{line/.style={thick}}
\tikzset{dottedline/.style={dotted,thick}}
\tikzset{dashedline/.style={dashed,thick}}
\tikzset{inputleg/.style={<-,thick}}
\tikzset{outputleg/.style={->,thick}}
\tikzset{dottedinput/.style={<-,dotted,thick}}
\newcommand{\adjoint}{
\nicearrow\xymatrix{ \ar@<2pt>[r] & \ar@<2pt>[l]}}
\renewcommand{\hookrightarrow}{\nicexy{\ar@{^{(}->}[r] &}}
\newcommand{\nicearrow}{\SelectTips{cm}{10}}
\newcommand{\nicexy}{\nicearrow\xymatrix@C+5pt}
\newcommand{\pushout}{\ar@{}[dr]|(0.75){\Searrow}}
\newcommand{\drrpushout}{\ar@{}[drr]|(0.90){\Searrow}}
\renewcommand{\to}{\hspace{-.1cm}\nicearrow\xymatrix@C-.2cm{\ar[r]&}\hspace{-.1cm}}
\newcommand{\comp}{\circ}
\newcommand{\defn}{\,\overset{\mathrm{def}}{=\joinrel=}\,}
\newcommand{\tensorover}[1]{\underset{#1}{\otimes}}
\newcommand{\timesover}[1]{\underset{#1}{\times}}
\newcommand{\psharp}{p^\#}
\newcommand{\bA}{\mathbf{A}}
\newcommand{\fB}{\mathfrak{B}}
\newcommand{\frakC}{\mathfrak{C}}
\newcommand{\fC}{\mathfrak{C}}
\newcommand{\fD}{\mathfrak{D}}
\newcommand{\sC}{\mathsf{C}}
\renewcommand{\sF}{\mathsf{F}}
\renewcommand{\sI}{\mathsf{I}}
\renewcommand{\sJ}{\mathsf{J}}
\newcommand{\sO}{\mathsf{O}}
\newcommand{\fO}{F_{\sO}}
\newcommand{\Path}{\mathsf{Path}}
\newcommand{\scrI}{\mathscr{I}}
\newcommand{\ua}{\underline{a}}
\newcommand{\ub}{\underline{b}}
\newcommand{\uc}{\underline{c}}
\newcommand{\smallop}{{\scalebox{.5}{$\mathrm{op}$}}}
\newcommand{\cof}{{\scalebox{.5}{$\mathrm{cof}$}}}
\newcommand{\acof}{{\scalebox{.5}{$\mathrm{t.cof}$}}}
\newcommand{\clubcof}{\clubsuit_{\cof}}
\newcommand{\clubacof}{\clubsuit_{\acof}}
\newcommand{\clubacofm}{\clubacof^{\M}}
\newcommand{\clubm}{\clubsuit^{\M}}
\newcommand{\clubk}{\clubsuit^{\rkm}}
\newcommand{\clubcofk}{\clubcof^{\rkm}}
\newcommand{\clubacofk}{\clubacof^{\rkm}}
\newcommand{\cala}{\mathcal{A}}
\newcommand{\calm}{\mathcal{M}}
\newcommand{\Top}{\mathsf{Top}}
\newcommand{\Sp}{\mathsf{Sp}}
\newcommand{\seq}{\mathsf{Seq}}
\newcommand{\seqc}{\seq_{\fC}}
\newcommand{\seqcc}{\seqc(\C)}
\newcommand{\symseq}{\mathsf{SymSeq}}
\newcommand{\symseqc}{\symseq_{\fC}}
\newcommand{\symseqcm}{\symseqc(\calm)}
\newcommand{\symseqcrkm}{\symseqc(R_K\M)}
\newcommand{\alg}{\mathsf{Alg}}
\newcommand{\algo}{\alg(\sO)}
\newcommand{\algom}{\alg(\sO;\M)}
\newcommand{\algorkm}{\alg(\sO;\rkm)}
\newcommand{\algoc}{\alg(\sO;\C)}
\newcommand{\algtm}{\alg(T;\M)}
\newcommand{\algtrkm}{\alg(T;\rkm)}
\newcommand{\gop}{G^{\smallop}}
\newcommand{\sigmab}{\Sigma_{[\ub]}}
\newcommand{\sigmac}{\Sigma_{[\uc]}}
\newcommand{\sigmaop}{\Sigma^{\smallop}}
\newcommand{\sigmaopb}{\Sigma^{\smallop}_{\smallbrb}}
\newcommand{\sigmaopc}{\Sigma^{\smallop}_{\smallbrc}}
\newcommand{\ctothec}{\C^{\fC}}
\newcommand{\operad}{\mathsf{Operad}}
\newcommand{\operadsigmac}{\operad^{\sigmaofc}}
\newcommand{\operadsigmacc}{\operadsigmac(\C)}
\newcommand{\omegac}{\Omega_{\fC}}
\newcommand{\operadomegac}{\operad^{\omegac}}
\newcommand{\operadomegacc}{\operadomegac(\C)}
\newcommand{\pofc}{\Sigma_{\frakC}}
\newcommand{\pofcop}
{\pofc^{\scalebox{.6}{$\mathrm{op}$}}}
\newcommand{\prof}{\mathsf{Prof}}
\newcommand{\profc}{\prof(\fC)}
\newcommand{\profcc}{\profc \times \fC}
\renewcommand{\pb}{\mathcal{P}(\fB)}
\newcommand{\smallprof}[1]
{\raisebox{.05cm}{\scalebox{0.8}{#1}}}
\newcommand{\smallbinom}[2]
{\raisebox{.05cm}{\scalebox{0.8}{$\binom{#1}{#2}$}}}
\newcommand{\duc}
{\smallprof{$\binom{d}{\uc}$}}
\newcommand{\singledbrc}
{\smallprof{$\binom{d}{[\uc]}$}}
\newcommand{\smallbr}[1]
{\raisebox{.03cm}{\scalebox{0.5}{#1}}}
\newcommand{\smallbrb}{\smallbr{$[\ub]$}}
\newcommand{\smallbrc}{\smallbr{$[\uc]$}}
\newcommand{\sigmabrc}{\Sigma_{\smallbr{$[\uc]$}}}
\newcommand{\sigmabrcop}{\sigmabrc^{\smallop}}
\newcommand{\sigmabrcopd}{\sigmabrcop \times \{d\}}
\newcommand{\sigmaofc}{\pofc}
\newcommand{\sigmacop}{\pofcop}
\newcommand{\sigmacopc}{\sigmacop \times \fC}
\newcommand{\impliesspace}{\quad\text{implies}\quad}
\renewcommand{\lim}{\mathsf{lim}\,}
\DeclareMathOperator*{\hocolim}{\mathsf{hocolim}\,}
\DeclareMathOperator{\Hom}{Hom}
\DeclareMathOperator{\Kan}{\mathsf{Kan}}
\newcommand{\deltan}{\Delta[n]}
\newcommand{\ddeltan}{\partial\Delta[n]}
\newcommand{\Lambdak}{\Lambda(K)}
\newcommand{\Lambdakbar}{\overline{\Lambdak}}
\newcommand{\tensoroversigman}{\otimes_{\Sigma_n}}
\newcommand{\boxn}{\boxprod n}
\renewcommand{\diamond}{\blacklozenge}
\newcommand{\staro}{\filledstar^{\sO}}
\newcommand{\starx}{\filledstar^X}
\newcommand{\tensorunit}{I}
\newcommand{\symseqcc}{\symseqc(\C)}
\newcommand{\sigmainv}{\sigma^{-1}}
\begin{document}

\title{Right Bousfield Localization and Operadic Algebras}

\author{David White}
\address{\noindent Department of Math and Computer Science \\ Denison University
\\ Granville, OH}
\email{david.white@denison.edu}

\author{Donald Yau}
\address{\noindent Department of Mathematics \\ The Ohio State University at Newark \\ Newark, OH}
\email{dyau@math.osu.edu}

\begin{abstract}
It is well known that under some general conditions right Bousfield localization exists.  We provide general conditions under which right Bousfield localization yields a monoidal model category.  Then we address the questions of when this monoidal model structure on a right Bousfield localization induces a model structure on the category of algebras over a colored operad and when a right Bousfield localization preserves colored operadic algebras. We give numerous applications, to topological spaces, equivariant spaces, chain complexes, stable module categories, and to the category of small categories. We recover a wide range of classical results as special cases of our theory, and prove several new preservation results.
\end{abstract}

\maketitle

\tableofcontents

\section{Introduction}

The CW Approximation Theorem is a fundamental result in homotopy theory. It allows us to work with CW complexes without losing information, up to weak homotopy equivalence. This result is one of a suite of similar results, and all are examples of right Bousfield localization \cite{hirschhorn}, also called cellularization and colocalization. Examples include $A$-cellular homotopy theory in topological spaces or simplicial sets \cite{chacholski-thesis}, $n$-connected covers and Postnikov pieces \cite{nofech}, analogous constructions in the category of small categories and in the category of simplicial abelian groups, point-set models in chain complexes and $R$-modules for localizing subcategories in the derived category of $R$ and the stable module category of $R$, and family model structures in equivariant homotopy theory (see section \ref{sec:app-equivar-spaces}). Right Bousfield localization also has applications to homotopy limits of left Quillen presheaves, and to constructing Postnikov towers in simplicial or spectral model categories \cite{barwickSemi}.

While much work has been done to understand how much structure is preserved by cellularization \cite{castellana-crespo-scherer-conn-covers}, \cite{mcgibbon-moller}, \cite{chacholski-parent-stanley}, \cite{nofech}, little is understood. In this paper, we find conditions under which right Bousfield localization preserves algebraic structure as encoded by a colored operad. We then prove that these new conditions are satisfied in the examples of interest, and provide specific preservation results about commonly used operads.

Operads are used to encode algebraic structure in general symmetric monoidal categories, and hence have become central to modern algebraic topology. Operads have also been applied to deformation theory and mathematical physics \cite{mss}, in gauge theory and symplectic geometry, in representation theory and graph cohomology \cite{fresse}, and in Goodwillie calculus.

In recent years, the importance of \textit{colored} operads has become clear, e.g. in \cite{bm07}, \cite{jy2}, and \cite{batanin-berger}. Colored operads encode even more general algebraic structures, including the category of operads itself, other categories which encode algebraic structure (e.g. modular operads, higher operads, colored operads), morphisms between algebras over an operad, modules over an operad, other enriched categories, and diagrams in such categories. Colored operads have been applied in enriched category theory, factorization homology, higher category theory (leading to $\infty$-operads), and topological quantum field theories.

Our setting in this paper is a monoidal model category $\M$, a set of objects $K$ (the cells we want to build things out of), and a right Bousfield localization $\rkm$. This means $id:\M \to \rkm$ is right Quillen and $\rkm$ satisfies the universal property that any right Quillen functor $F:\M \to \cat{N}$, taking the $K$-colocal equivalences to weak equivalences, factors through $\rkm$. As a category, $\rkm$ is the same as $\M$, hence is monoidal, but the model structure is different (with more weak equivalences), and it is not automatic that $\rkm$ will be a monoidal model category; i.e., the pushout product axiom could fail in $\rkm$. In order to prove our preservation results, we will need to bring homotopy into the realm of colored operads by building model structures and semi-model structures on categories of algebras over operads. Such structures provide a powerful computational tool which has been crucial in many of the applications above, and are therefore of independent interest. 

In Section \ref{sec:colored-operads} we provide definitions and notations concerning colored operads. In Section \ref{sec:right-bous} we review right Bousfield localization. We assume the reader is familiar with the basics of model categories (an excellent overview is \cite{hovey}), and we encourage the reader to proceed with a copy of \cite{hirschhorn} near at hand. In Section \ref{sec:monoidal} we define the notion of a monoidal right Bousfield localization, dualizing \cite{white-localization}, and we provide an easy to check condition guaranteeing that a right Bousfield localization $\rkm$ satisfies the pushout product axiom. In Section \ref{sec:model-on-algebras} we build our model structures on categories of algebras taken in $\rkm$. In Section \ref{sec:preservation} we dualize our general preservation results from \cite{white-yau}.  In Sections \ref{sec:colocal-colored-operads} and \ref{sec:cofibrant-operads} we derive specific preservation results, often with easier to check hypotheses. Finally, in Sections \ref{sec:applications}--\ref{app:small-cat} we apply our results to numerous examples of interest, including spaces, chain complexes, $R$-modules, small categories, and equivariant spaces.

En route to these examples, we establish many results of independent interest. For example, in Theorem \ref{thm:oalg-g-spaces-model}, we prove that algebras over any colored operad in $G$-spaces have a transferred model structure. In Corollary \ref{cor:oalg-g-spectra-model}, we prove the same for positive model structures on orthogonal $G$-spectra. To our knowledge, these results have not appeared before, and are essential to the study of brave new equivariant algebra. Similarly, in Section \ref{app:stmod}, we conduct what we believe to be the first ever study of operad algebras in the stable module category. This has the potential to give powerful new tools to representation theorists using the stable module category.

\textbf{Acknowledgments:} The authors are grateful to Sarah Wolff for helpful conversations as Section \ref{app:stmod} was being worked out, and to Sinem Odaba\c{s}i for a helpful conversation related to Section \ref{subsec:cotorsion}.

\section{Colored Operads}
\label{sec:colored-operads}

\begin{assumption}
\label{basicassumption}
Fix a symmetric monoidal closed category $(\C, \otimes, \tensorunit, \Hom)$ with all small limits and colimits, initial object $\varnothing$, and terminal object $*$.
\end{assumption}

Let us first recall some notations regarding colors from \cite{jy2}.

\begin{definition}
\label{profiles}
Fix a non-empty set $\fC$ once and for all, whose elements are called \emph{colors}.
\begin{enumerate}
\item
A \emph{$\fC$-profile} is a finite sequence of elements $\uc = (c_1, \ldots, c_m)$ in $\fC$.  The empty $\fC$-profile is denoted $\emptyset$.  Write $|\uc|=m$ for the \emph{length} of a profile $\uc$.
\item
The set of all $\fC$-profiles is denoted by $\profc$.
\item
If $\ua = (a_1,\ldots,a_m)$ and $\ub$ are $\fC$-profiles, then a \emph{left permutation} $\sigma : \ua \to \ub$ is a permutation $\sigma \in \Sigma_{m}$ such that
\[
\sigma\ua = (a_{\sigmainv(1)}, \ldots , a_{\sigmainv(m)}) = \ub
\]
This necessarily implies $|\ua| = |\ub| = m$.  A left permutation is also called a \emph{map}.
\item
The \emph{groupoid of $\fC$-profiles}, with left permutations as the isomorphisms, is denoted by $\pofc$.  The opposite groupoid $\pofcop$ is regarded as the groupoid of $\fC$-profiles with \emph{right permutations}
\[
\ua\sigma = (a_{\sigma(1)}, \ldots , a_{\sigma(m)})
\]
as isomorphisms.
\item
The \emph{orbit} of a profile $\ua$ is denoted by $[\ua]$.  The maximal connected sub-groupoid of $\pofc$ containing $\ua$ is written as $\Sigma_{[\ua]}$.  Its objects are the left permutations of $\ua$.  There is a decomposition
\begin{equation}
\label{pofcdecomp}
\pofc \cong \coprod_{[\ua] \in \pofc} \Sigma_{[\ua]},
\end{equation}
where there is one coproduct summand for each orbit $[\ua]$ of a $\fC$-profile.
\item
A \emph{$\fC$-colored object in $\C$} is an object in the product category 
\[
\ctothec \defn \prod_{\fC} \C.
\]
A typical $\fC$-colored object $X$ is also written as $\{X_a\}$ with $X_a \in \C$ for each color $a$.
\item
A $\fC$-colored object $X$ is said to be \emph{concentrated at $c \in \fC$} if $X_d = \varnothing$ for all colors $d \not= c$.  A map of $\fC$-colored objects is said to be \emph{concentrated at $c \in \fC$} if both its domain and codomain are concentrated at $c$.
\item
The category of \emph{$\fC$-colored symmetric sequences in $\sC$} is the diagram category
\[
\symseqcc \defn \C^{\pofcop \times \fC}.  
\]
\item
The category of \emph{$\fC$-colored sequences in $\sC$} is the diagram category
\[
\seqcc \defn \C^{\profcc} = \prod_{\profcc} \C.  
\]
\item
A typical $\fC$-colored (symmetric) sequence is also written as $\{X\duc\}$ with $X\duc \in \C$ for $d \in \fC$ and $\uc \in \profc$.
\item
Suppose $k \geq 0$.  A $\fC$-colored (symmetric) sequence $X$ is said to be \emph{concentrated at $k$} if, for $d \in \fC$ and $\uc \in \profc$,
\[
|\uc| \not= k \impliesspace X\duc = \varnothing
\]

\end{enumerate}
\end{definition}

\begin{remark}
\begin{enumerate}
\item
A $\fC$-colored (symmetric) sequence concentrated at $0$ is equivalent to a $\fC$-colored object.
\item
Since there is a coproduct decomposition
\[
\pofcop \times \fC  \cong \coprod_{d \in \fC} \coprod_{[\uc] \in \pofc} \Sigma_{[\uc]}^{\mathrm{op}} \times \{d\},
\]
there is a product decomposition
\begin{equation}
\label{csonedecomp}
\symseqcc
\cong \prod_{d\in \fC} \prod_{[\uc] \in \pofc} \C^{\Sigma_{[\uc]}^{\mathrm{op}} \times\{d\}} 
\cong \prod_{d\in \fC} \prod_{[\uc] \in \pofc} \C^{\Sigma_{[\uc]}^{\mathrm{op}}}.
\end{equation}
If $X \in \symseqcc$, then a typical component with respect to this decomposition is written as
\[
X\smallbinom{d}{[\uc]} \in \C^{\Sigma_{[\uc]}^{\mathrm{op}} \times\{d\}}.
\]
A  $\fC$-colored symmetric sequence is the $\fC$-colored version of a $1$-colored symmetric sequence \cite{harper-jpaa} (3.1).
\item
Suppose $\C$ is a cofibrantly generated model category.  Then  by the decomposition \eqref{csonedecomp} and  \cite{hirschhorn} (11.1.10 and 11.6.1), the category $\symseqcc$ inherits from $\C$ a cofibrantly generated model category structure, in which weak equivalences and fibrations are defined entrywise in $\C$.  Likewise, the category $\seqcc$ inherits from $\C$ a cofibrantly generated model category structure in which fibrations, cofibrations, and weak equivalences are all defined entrywise in $\C$.
\end{enumerate}
\end{remark}

The following colimit construction is needed to define the colored version of the circle product.

\begin{definition}
\label{tensoroverg}
Suppose $G$ is a finite non-empty connected groupoid, $X \in \C^{\gop}$, and $Y \in \C^G$.  Define
\begin{equation}
\label{tensor-over}
X \tensorover{G} Y = 
\colim \left[
\nicexy{
\gop \ar[r]^-{\Delta} & \gop \times \gop \cong \gop \times G 
\ar[r]^-{X \times Y} &
\C \times \C \ar[r]^-{\otimes} & \C
}\right].
\end{equation}
\end{definition}

We now recall from \cite{white-yau} the colored circle product, the monoids with respect to which are $\fC$-colored operads.

\begin{definition}
\label{coloredcircleproduct}
Suppose:
\begin{itemize}
\item
$A, B \in \symseqcc$, and $Y \in \ctothec$;
\item
$d \in \fC$, $\uc = (c_1,\ldots,c_m) \in \pofc$, and $[\ub] \in \pofc$ is an orbit.
\end{itemize}
\begin{enumerate}
\item
Define $Y^{\otimes [\ub]} \in \C^{\Sigma_{[\ub]}}$ as the diagram with values
\begin{equation}
\label{y-tensor-b}
Y^{\otimes [\ub]}(\ub') = Y^{\otimes \ub'} 
\defn Y_{b_1'} \otimes \cdots \otimes Y_{b_n'}
\end{equation}
if $\ub' = (b_1',\ldots,b_n') \in \sigmab$.  The structure maps in the diagram $Y^{\otimes [\ub]}$ are given by permutation of the tensor factors.
\item
Define the object $B^{\otimes \uc} \in \C^{\pofcop}$ as having the $[\ub]$-component
\begin{equation}
\label{btensorc}
B^{\otimes \uc}([\ub]) 
=
\coprod_{\substack{\{[\ub_j] \in \pofc\}_{1 \leq j \leq m} \,\mathrm{s.t.} \\
[\ub] = [(\ub_1,\ldots,\ub_m)]}} 
\Kan^{\sigmaopb} 
\left[\bigotimes_{j=1}^m B\smallbinom{c_j}{[\ub_j]}\right] \in \C^{\sigmaopb}.
\end{equation}
The left Kan extension is defined as
\[
\nicexy{
\prod_{j=1}^m \sigmaop_{[\ub_j]} 
\ar[d]_-{\mathrm{concatenation}} 
\ar[rr]^-{\prod B\binom{c_j}{-}} 
&& 
\C^{\times m} \ar[r]^-{\otimes} 
& \C \ar[d]^-{=}\\
\sigmaopb \ar[rrr]_-{\Kan^{\sigmaopb}\left[\otimes B(\vdots)\right]}^-{\mathrm{left ~Kan~ extension}} &&& \C.
}\]
\item
By allowing left permutations of $\uc$ in \eqref{btensorc}, we obtain $B^{\otimes[\uc]} \in \C^{\sigmac \times \pofcop}$ with
\begin{equation}
\label{tensorbracket}
B^{\otimes [\uc]}([\ub]) \in \C^{\Sigma_{[\uc]} \times \sigmaopb}.
\end{equation}
\item
The \emph{circle product}
\[
A \circ B \in \symseqcc
\]
is defined to have components
\begin{equation}
\label{acircleb}
(A \circ B)\smallbinom{d}{[\ub]} 
= \coprod_{[\ua] \in \pofc} 
A\smallbinom{d}{[\ua]} \tensorover{\Sigma_{[\ua]}} 
B^{\otimes [\ua]}([\ub]) \in \C^{\sigmaopb \times \{d\}}
\end{equation}
for $d \in \fC$ and orbits $[\ub] \in \pofc$, where $\tensorover{\Sigma_{[\ua]}}$ is defined in \eqref{tensor-over}.
\end{enumerate}
\end{definition}

The next definition is the non-symmetric version of the previous definition.

\begin{definition}
\label{non-sym-circle-product}
Suppose $A, B \in \seqcc$.
\begin{enumerate}
\item
For $\ua=(a_1,\ldots,a_m), \ub \in \profc$, define
\[
B^{\otimes \ua}(\ub) = 
\coprod_{\substack{\{\ub_j \in \profc\}_{1 \leq j \leq m} \\
\mathrm{s.t.}\,\ub = (\ub_1,\ldots,\ub_m)}} 
\left[\bigotimes_{j=1}^m B\smallbinom{a_j}{\ub_j}\right].
\]
\item
The \emph{non-symmetric circle product}
\[
A \circ B \in \seqcc
\]
is defined to have entries
\begin{equation}
\label{acircleb-nonsym}
(A \circ B)\smallbinom{d}{\ub} 
= \coprod_{\ua \in \profc} 
A\smallbinom{d}{\ua} \otimes 
B^{\otimes \ua}(\ub)
\end{equation}
for $d \in \fC$ and $\ub \in \profc$.
\end{enumerate}
\end{definition}

Recall that a $\fC$-colored (symmetric) sequence concentrated at $0$ is equivalent to a $\fC$-colored object.  The following observation is immediate from the definition.

\begin{lemma}
\label{sobjectatzero}
Suppose $Y$ is  a $\fC$-colored object.
\begin{enumerate}
\item
For $X \in \symseqcc$ and with $Y$ regarded as a symmetric sequence concentrated at $0$, the circle product $X \circ Y$ is also concentrated at $0$.
\item
For $X \in \seqcc$ and with $Y$ regarded as a sequence concentrated at $0$, the non-symmetric circle product $X \circ Y$ is also concentrated at $0$.
\end{enumerate}
\end{lemma}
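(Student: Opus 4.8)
The plan is to unwind the circle-product formulas \eqref{acircleb} and \eqref{acircleb-nonsym} and observe that once the target orbit $[\ub]$ (resp. profile $\ub$) is non-empty, every summand contains a tensor factor $Y\smallbinom{a}{[\uc]}$ with $[\uc]$ non-empty, which is $\varnothing$; since every operation used to build the circle product is a colimit, a single initial factor propagates outward and annihilates the component. The facts I would use are: because $\C$ is symmetric monoidal \emph{closed} (Assumption \ref{basicassumption}), each functor $-\otimes Z$ is a left adjoint and hence preserves colimits, so $\varnothing\otimes Z\cong\varnothing$ and any finite tensor product with an initial factor is initial; a coproduct of copies of $\varnothing$ is initial; the left Kan extension $\Kan^{\sigmaopb}$ of \eqref{btensorc} is left adjoint to restriction, hence preserves initial objects; and for a finite non-empty connected groupoid $G$, the construction $X\tensorover{G}Y$ of Definition \ref{tensoroverg} applied to the initial object of $\C^G$ is the colimit over the connected category $G^{\mathrm{op}}$ of the constant functor at $\varnothing$, which is $\varnothing$.

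For (1), fix $d\in\fC$ and a non-empty orbit $[\ub]\in\pofc$, and view $Y$ as the symmetric sequence with $Y\smallbinom{a}{[\uc]}=Y_a$ for $\uc=\varnothing$ and $Y\smallbinom{a}{[\uc]}=\varnothing$ otherwise. For each orbit $[\ua]=[(a_1,\dots,a_m)]$, every index of the coproduct \eqref{btensorc} computing $Y^{\otimes\ua}([\ub])$ is a tuple $\{[\ub_j]\}_{1\le j\le m}$ with $[\ub]=[(\ub_1,\dots,\ub_m)]$; since $|\ub|\ge 1$, some $\ub_j$ is non-empty, so $Y\smallbinom{a_j}{[\ub_j]}=\varnothing$ and the tensor product over $j$ is $\varnothing$. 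Running this through the left Kan extension and then the coproduct gives $Y^{\otimes\ua}([\ub])=\varnothing$, hence $Y^{\otimes[\ua]}([\ub])=\varnothing$ in $\C^{\Sigma_{[\ua]}\times\sigmaopb}$ after allowing left permutations of $\ua$. Then $X\smallbinom{d}{[\ua]}\tensorover{\Sigma_{[\ua]}}Y^{\otimes[\ua]}([\ub])=\varnothing$ by the last fact (the groupoid $\Sigma_{[\ua]}$ is connected by Definition \ref{profiles}), and the coproduct over all orbits $[\ua]$ in \eqref{acircleb} is $\varnothing$; thus $X\circ Y$ is concentrated at $0$.

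Part (2) is the same argument run through \eqref{acircleb-nonsym} instead of \eqref{acircleb}, and is in fact simpler: no Kan extensions or $\tensorover{}$-colimits occur, and one only needs to note that $Y^{\otimes\ua}(\ub)=\varnothing$ whenever $|\ub|\ge 1$ (some factor $Y\smallbinom{a_j}{\ub_j}$ is $\varnothing$) and then take the coproduct over $\ua\in\profc$. I do not expect any real obstacle here; the single point requiring care is making sure every construction that enters the circle product is genuinely a colimit so that it preserves the initial object — in particular the connectedness hypothesis on $G$ is precisely what guarantees $X\tensorover{G}(\text{initial})=\text{initial}$, rather than something carrying a residual $G$-fold structure.
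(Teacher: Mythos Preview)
Your proposal is correct and is precisely the explicit unwinding that the paper suppresses by declaring the lemma ``immediate from the definition''; there is no alternative argument here. One minor remark: your closing caveat about connectedness of $G$ is overcautious---once the second factor is entrywise $\varnothing$, the diagram $\alpha\mapsto X(\alpha)\otimes\varnothing$ is constant at $\varnothing$, and the colimit of a constant-at-$\varnothing$ diagram over \emph{any} small category (empty, connected, or not) is $\varnothing$, so connectedness plays no role in this particular step.
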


\begin{proposition}
Suppose $\fC$ is a non-empty set of colors.
\begin{enumerate}
\item
$\left(\symseqcc, \circ, \scrI\right)$ is a monoidal category with unit $\scrI$ such that
\begin{equation}
\label{circle-unit}
\scrI \duc = 
\begin{cases}
\tensorunit & \text{if $\uc = d$},\\
\varnothing & \text{if $\uc \not= d$}
\end{cases}
\end{equation}
for $\uc \in \pofc$ and $d \in \fC$.
\item
$\left(\seqcc, \circ, \scrI\right)$ is a monoidal category.
\end{enumerate}
\end{proposition}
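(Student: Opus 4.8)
The plan is to verify the unit and associativity axioms directly from Definitions~\ref{coloredcircleproduct} and~\ref{non-sym-circle-product}, and then to obtain the coherence (pentagon and triangle) identities essentially for free, by observing that every structure isomorphism for $\circ$ is assembled out of the associativity and unit isomorphisms of $(\C, \otimes, \tensorunit)$ together with canonical rearrangements of colimits. Throughout I would use that $\C$ has all small colimits and that $\otimes$ preserves colimits separately in each variable (both from Assumption~\ref{basicassumption}), so that the left Kan extensions and the colimits $\tensorover{G}$ of Definition~\ref{tensoroverg} exist and commute with $\otimes$ and with coproducts. This is the $\fC$-colored analogue of the classical fact that $1$-colored symmetric sequences form a monoidal category under the composition product \cite{harper-jpaa}, and much of the bookkeeping already appears in \cite{white-yau}.

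First I would dispatch the unit axioms in part~(1). For $\scrI \circ B \cong B$, substitute $A = \scrI$ into \eqref{acircleb}: by \eqref{circle-unit}, $\scrI\smallbinom{d}{[\ua]}$ is $\tensorunit$ when $[\ua]$ is the length-one orbit $[d]$ and $\varnothing$ otherwise, so the coproduct over $[\ua]$ collapses to the summand $[\ua] = [d]$; there $\Sigma_{[d]}$ is the trivial groupoid and $B^{\otimes[d]}([\ub]) \cong B\smallbinom{d}{[\ub]}$, since the only way to write $[\ub]$ as a concatenation of one profile is $[\ub]$ itself and the left Kan extension in \eqref{btensorc} along an identity is an identity, so the unit isomorphism $\tensorunit \otimes - \cong \id$ in $\C$ finishes the case. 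For $A \circ \scrI \cong A$, substituting $B = \scrI$ into \eqref{btensorc} shows that $\scrI^{\otimes[\ua]}$ is concentrated at the orbit $[\ua]$ — a summand survives only when every profile in the decomposition is the length-one profile at the matching color of $\ua$ — where the residual left Kan extension of $\tensorunit$ along the concatenation map out of the trivial groupoid is the free $\Sigma_{[\ua]}$-object on $\tensorunit$; hence in \eqref{acircleb} the coproduct over $[\ua]$ collapses and the colimit $\tensorover{\Sigma_{[\ua]}}$ against that free object returns $A\smallbinom{d}{[\ua]}$. Naturality in $A$ and $B$ of these isomorphisms is immediate.

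The main work is associativity: a natural isomorphism $(A \circ B) \circ C \cong A \circ (B \circ C)$ in $\symseqcc$. I would expand both sides, via \eqref{acircleb} and \eqref{btensorc}, into large coproducts of iterated tensor products in $\C$ indexed by \emph{nested} decompositions of $\fC$-profiles — on the left one first decomposes a profile into a profile of profiles and then decomposes each of those again, while on the right the decompositions occur in the opposite order — all taken modulo the groupoid actions carried by the various $\tensorover{\Sigma}$'s and left Kan extensions. Since $\otimes$ commutes with coproducts and with the defining colimits of $\tensorover{G}$ and of the Kan extensions, a standard interchange-of-colimits argument pulls every coproduct and colimit to the outside on both sides; matching the two then reduces to (i) a bijection between the two indexing sets of nested profile decompositions, which must be checked to be compatible with all the relevant $\Sigma_{[-]}$-actions and with the orbit decomposition \eqref{pofcdecomp}, and (ii) the associativity isomorphism of $\otimes$ in $\C$ applied termwise. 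I expect step~(i) to be the principal obstacle: threading the color data and the orbit bookkeeping through the \emph{nested} Kan extensions so that the reindexing bijection is genuinely equivariant is the one place where the $\fC$-colored case is appreciably more delicate than the $1$-colored one, and it is the reason the formalism of Definition~\ref{tensoroverg} and of \eqref{btensorc} was set up with such care.

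Finally, the pentagon and triangle identities hold because, after the reductions just described, the two composites of structure isomorphisms appearing in each identity are built from exactly the same canonical data — unit and associativity isomorphisms of $(\C, \otimes, \tensorunit)$ and reindexing isomorphisms of colimits — and hence agree by coherence in $\C$; equivalently, the verification is formally identical to the $1$-colored case of \cite{harper-jpaa}. Part~(2), the non-symmetric statement, is the degenerate case in which every orbit is a singleton: Definition~\ref{non-sym-circle-product} involves no $\Sigma$-actions and no Kan extensions, so the argument above goes through verbatim with each $\tensorover{\Sigma}$ and each $\Kan$ replaced by an ordinary coproduct, the unit again being the sequence $\scrI$ of \eqref{circle-unit} regarded as an object of $\seqcc$.
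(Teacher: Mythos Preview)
Your proposal is correct and is essentially the argument the paper has in mind: the paper's own proof consists entirely of a citation to \cite{white-yau} for part~(1) and the phrase ``proved similarly'' for part~(2), so you have supplied a faithful sketch of what that cited proof contains, right down to the observation that the non-symmetric case is the degenerate one with no $\Sigma$-actions or Kan extensions.
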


\begin{proof}
The first assertion is proved in \cite{white-yau}.  The second assertion is proved similarly.
\end{proof}

\begin{definition}
\label{def:colored-operad}
Suppose $\fC$ is a non-empty set of colors.
\begin{enumerate}
\item
The category $\operadsigmacc$ of \emph{$\fC$-colored operads in $\sC$} is the category of monoids in the monoidal category $\left(\symseqcc, \circ, \scrI\right)$.
\item
The category $\operadomegacc$ of \emph{$\fC$-colored non-symmetric operads in $\C$} is the category of monoids in the monoidal category $\left(\seqcc, \circ, \scrI\right)$.
\item
A \emph{colored (non-symmetric) operad in $\C$} is a $\fD$-colored (non-symmetric) operad in $\C$ for some non-empty set of colors $\fD$.
\item
Suppose $\O$ is a $\fC$-colored (non-symmetric) operad in $\C$.  The category of algebras over the monad \cite{maclane} (VI.2)
\[
\O \comp - : \C^{\fC} \to \C^{\fC}
\]
is denoted by $\algo = \algoc$, whose objects are called \emph{$\sO$-algebras}.
\end{enumerate}
\end{definition}

\begin{proposition}
\label{algebras-colimits}
Suppose $\O$ is a $\fC$-colored (non-symmetric) operad in $\C$.  
\begin{enumerate}
\item
The category $\algo$ has all small limits, which are created and preserved by the forgetful functor
\[
\nicexy{\C^{\fC} & \algo \ar[l]}
\]
\item
The category $\algo$ has all small colimits, with reflexive coequalizers and filtered colimits created and preserved by the forgetful functors.
\end{enumerate}
\end{proposition}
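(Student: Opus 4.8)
The plan is to deduce both statements from the standard categorical machinery for categories of algebras over a monad, together with the concrete description of the free $\sO$-algebra functor coming from the circle product. Since $\algo$ is by definition the category of algebras over the monad $T = \O \circ - \colon \C^{\fC} \to \C^{\fC}$, and $\C^{\fC} = \prod_{\fC} \C$ is complete and cocomplete because $\C$ is (Assumption \ref{basicassumption}), the general theory of monadic adjunctions immediately gives part (1): for any monad on a complete category, the Eilenberg--Moore category has all small limits, and they are created (hence preserved and reflected) by the forgetful functor $U \colon \algo \to \C^{\fC}$. I would cite this as the dual of the classical result (e.g. Borceux, or Mac Lane \cite{maclane} VI) and simply remark that a limit of $\sO$-algebras is computed by taking the limit of the underlying $\fC$-colored objects and equipping it with the unique structure map induced by the universal property.

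For part (2), the existence of \emph{all} small colimits in $\algo$ follows from a general theorem: if $\C^{\fC}$ is cocomplete and the monad $T$ preserves filtered colimits (or more generally if $\algo$ has reflexive coequalizers), then $\algo$ is cocomplete. So the real content is the two \emph{creation/preservation} claims. First I would establish that $T = \O \circ -$ preserves reflexive coequalizers and filtered colimits. The key observation is that the circle product $\O \circ Y$, evaluated via \eqref{acircleb}, is built out of coproducts, the constructions $Y^{\otimes[\ua]}$, left Kan extensions, and the coend-type functor $- \tensorover{\Sigma_{[\ua]}} -$; all of these are themselves colimits, and the only non-colimit ingredient is the monoidal product $\otimes$ of $\C$. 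Since $\C$ is symmetric monoidal \emph{closed}, $\otimes$ preserves all colimits in each variable, and in particular the iterated tensor powers appearing in $Y^{\otimes \ua'} = Y_{b_1'} \otimes \cdots \otimes Y_{b_n'}$ preserve filtered colimits and reflexive coequalizers in $Y$ (using that filtered colimits commute with finite products of shapes, and the standard fact that reflexive coequalizers are preserved by $n$-fold tensor powers in a closed monoidal category). Colimits commute with colimits, so assembling these pieces shows $T$ preserves filtered colimits and reflexive coequalizers of $\fC$-colored objects.

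Having shown $T$ preserves these colimits, I would invoke the dual of the standard result (see \cite{hirschhorn} or Borceux Vol.\ 2, or the explicit treatment in Harper \cite{harper-jpaa}) that for a monad preserving reflexive coequalizers (resp.\ filtered colimits), the forgetful functor $U \colon \algo \to \C^{\fC}$ creates them: given a reflexive (resp.\ filtered) diagram of $\sO$-algebras, one forms the colimit $Q$ of the underlying objects, and then the preservation property lets one construct the required $T$-action on $Q$ as the canonical map out of $TQ \cong \colim T(-)$, with uniqueness and the algebra axioms following from the universal property. For genuine cocompleteness of $\algo$ one then builds arbitrary colimits from coproducts and reflexive coequalizers in the usual way: free algebras $TX$ have coproducts computed by $T$ applied to coproducts in $\C^{\fC}$ (since $T$, being a left adjoint, preserves coproducts), general coproducts of algebras are reflexive coequalizers of coproducts of free algebras, and arbitrary colimits are then reflexive coequalizers of coproducts. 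The main obstacle — and the only step requiring genuine care rather than citation — is verifying that the $n$-fold tensor power functors $(-)^{\otimes n}$ on $\C$ preserve reflexive coequalizers; this is where the \emph{reflexive} hypothesis is essential (ordinary coequalizers are not preserved), and I would either cite the standard lemma (e.g.\ \cite{harper-jpaa} Lemma on reflexive coequalizers, or Rezk's thesis) or give the short argument comparing the coequalizer of $X^{\otimes n} \rightrightarrows Y^{\otimes n}$ with the iterated pushout built from the reflexive structure.
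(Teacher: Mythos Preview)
Your proposal is correct and follows the standard monadic route; the paper's own proof is simply a one-line citation to \cite{white-yau} for the symmetric case together with the remark that the non-symmetric case is proved similarly, so your outline in effect supplies the argument that the paper defers to the reference.
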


\begin{proof}
If $\O$ is a $\fC$-colored operad, then the assertions are proved in \cite{white-yau}.  The non-symmetric case is proved similarly.
\end{proof}

The following observation provides a way to compute $\otimes_{\Sigma_{[\uc]}}$ \eqref{tensor-over}.

\begin{lemma}
\label{tensor-over-sigmac}
Suppose:
\begin{itemize}
\item
$\uc \in \pofc$ with length $|\uc| \geq 1$, and $\{c^i\}_{1 \leq i \leq r}$ are the distinct colors that appear in $\uc$ with $c^i$ appearing $k_i \geq 1$ times.
\item
$\ub$ is in the orbit of $\uc$ (e.g., $\ub = \uc$).
\item
$A \in \sC^{\sigmaopc}$, and $W = \{W_{c^i}\}_{1 \leq i \leq r} \in \sC^{\{c^1,\ldots, c^r\}}$ (i.e., each $W_{c^i}$ is an object in $\sC$).
\end{itemize}
Then there is a natural isomorphism
\begin{equation}
\label{compute-tensor-over}
A \tensorover{\sigmac} W^{\otimes [\uc]} 
\cong
A(\ub) \tensorover{\Sigma_{k_1} \times \cdots \times \Sigma_{k_r}} W^{\otimes \ub}.
\end{equation}
Here:
\begin{itemize}
\item
$- \tensorover{\sigmac}-$ is the colimit in \eqref{tensor-over}.
\item
$W^{\otimes [\uc]}$ and $W^{\otimes \ub}$ are as in \eqref{y-tensor-b}.
\item
$\Sigma_{k_1} \times \cdots \times \Sigma_{k_r}$ acts on $A(\ub)$ from the right by permuting the $k_i$ copies of $c_i$'s among themselves for each $1 \leq i \leq r$, and likewise for $W^{\otimes \ub}$.
\end{itemize}
\end{lemma}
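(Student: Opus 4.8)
The plan is to exploit that $\sigmac$ is a finite, non-empty, \emph{connected} groupoid (Definition~\ref{profiles}), so the colimit of Definition~\ref{tensoroverg} that defines $\tensorover{\sigmac}$ is computed by the full subgroupoid on any single chosen object. I would take that object to be $\ub$: since $\ub$ lies in the orbit $[\uc]$ it is one of the left permutations of $\uc$, hence an object of $\sigmac$. First I would pin down its automorphism group. A left permutation $\ub \to \ub$ is a $\sigma \in \Sigma_{|\uc|}$ with $\sigma\ub = \ub$, and these are precisely the permutations that, for each $1 \le i \le r$, permute among themselves the $k_i$ positions of $\ub$ carrying the color $c^i$; thus $\mathrm{Aut}_{\sigmac}(\ub) \cong \Sigma_{k_1} \times \cdots \times \Sigma_{k_r}$, the corresponding Young subgroup. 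The inclusion $\iota$ of the one-object full subgroupoid at $\ub$ into $\sigmac$ is therefore fully faithful, and it is essentially surjective because $\sigmac$ is connected, so $\iota$ is an equivalence of groupoids.

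Next I would feed this into the colimit. Unwinding \eqref{tensor-over}, the object $A \tensorover{\sigmac} W^{\otimes[\uc]}$ is the colimit over $\sigmac^{\smallop}$ of the functor sending $g$ to $A(g) \otimes W^{\otimes[\uc]}(g)$, where $W^{\otimes[\uc]}$ has first been precomposed with the canonical isomorphism $\sigmac^{\smallop} \cong \sigmac$ so that the assignment is genuinely functorial in $g$. Since $\iota^{\smallop}$ is an equivalence it is final, so the colimit is unchanged upon restricting this diagram along it; the restricted diagram is indexed by the one-object groupoid on $\Sigma_{k_1} \times \cdots \times \Sigma_{k_r}$ and sends its unique object to $A(\ub) \otimes W^{\otimes[\uc]}(\ub)$. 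By Definition~\ref{tensoroverg} this restricted colimit is exactly $A(\ub) \tensorover{\Sigma_{k_1}\times\cdots\times\Sigma_{k_r}} W^{\otimes[\uc]}(\ub)$, where $A(\ub)$ carries the right action obtained by restricting the $\sigmac^{\smallop}$-action and $W^{\otimes[\uc]}(\ub)$ the left action obtained by restricting the $\sigmac$-action. Finally \eqref{y-tensor-b} identifies $W^{\otimes[\uc]}(\ub)$ with $W^{\otimes\ub}$, the restricted action being permutation of the tensor factors within each color block, and the restricted right action on $A(\ub)$ is the one named in the statement; substituting yields \eqref{compute-tensor-over}. Naturality in $A$ and in $W$ is automatic since every step above is natural in both.

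I expect the only real obstacle to be bookkeeping rather than mathematics: one has to keep the two variances straight — the colimit runs over $\sigmac^{\smallop}$, with $A$ and $W^{\otimes[\uc]}$ entering with opposite variance — and then check on the nose that the two $\Sigma_{k_1}\times\cdots\times\Sigma_{k_r}$-actions produced by restriction to $\ub$ coincide with the block-permutation action on $W^{\otimes\ub}$ and the restricted $\sigmac$-action on $A(\ub)$ named in the statement. If one prefers to avoid quoting finality of $\iota^{\smallop}$, the same reduction can be made completely explicit: the colimit of a functor $F$ over a connected groupoid $G$ with object $x_0$ is the coequalizer of the two evident maps from $\coprod_{\phi \in \mathrm{Aut}_G(x_0)} F(x_0)$ to $F(x_0)$, and applying this with $G = \sigmac$, $x_0 = \ub$, and $F(g) = A(g) \otimes W^{\otimes[\uc]}(g)$ reproduces the right-hand side of \eqref{compute-tensor-over} directly.
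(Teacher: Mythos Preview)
Your proposal is correct and follows essentially the same route as the paper: identify $\mathrm{Aut}_{\sigmac}(\ub)\cong\Sigma_{k_1}\times\cdots\times\Sigma_{k_r}$ and use connectedness of $\sigmac$ to reduce the colimit to the single object $\ub$. The only difference is stylistic: the paper makes the reduction explicit by choosing, for each pair $\ub',\ub''$ in the orbit, the unique ``order-preserving'' map $\sigma_{(\ub',\ub'')}:\ub'\to\ub''$ (sending the $j$th copy of $c^i$ to the $j$th copy) and factoring every morphism through these, whereas you invoke finality of the inclusion of the one-object subgroupoid abstractly.
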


\begin{proof}
First note that a self-map of $\ub$ simply permutes the $k_i$ copies of $c_i$'s among themselves for each $1 \leq i \leq r$.  Suppose $\ub'$ and $\ub''$ are two objects in the orbit of $\uc$.  Then there is a \emph{unique} order-preserving map $\sigma_{(\ub',\ub'')} : \ub' \to \ub''$ in the sense  that, for each $1 \leq i \leq r$, the $j$th copy of $c^i$ (counting from left to right) in $\ub'$ is sent to the $j$th copy of $c^i$ in $\ub''$ for all $1 \leq j \leq k_i$.  Then each map $\tau : \ub' \to \ub''$ is factored as
\[
\nicexy@C+.5cm{
\ub' \ar[dr]|-{\tau} \ar[d]_-{\tau'} \ar[r]^-{\sigma_{(\ub',\ub'')}} 
& 
\ub'' \ar[d]^-{\tau''}
\\
\ub' \ar[r]_-{\sigma_{(\ub',\ub'')}} & \ub''
}\]
for some self-maps $\tau'$ of $\ub'$ and $\tau''$ of $\ub''$ that are \emph{uniquely} determined by $\tau$.  Using these order-preserving maps $\sigma_{(\ub',\ub'')}$, it follows that the natural map 
\[
A(\ub) \tensorover{\Sigma_{k_1} \times \cdots \times \Sigma_{k_r}} W^{\otimes \ub}
\to
A \tensorover{\sigmac} W^{\otimes [\uc]} 
\]
is an isomorphism.
\end{proof}

\section{Existence of Right Bousfield Localization}
\label{sec:right-bous}

Here we first recall a few definitions and existence result regarding right Bousfield localization, taken from \cite{hirschhorn}. Let $\M$ be a model category and $K$ a set of cofibrant objects in $\M$. Let $\map(-,-)$ denote the homotopy function complex in $\M$. We describe the right Bousfield localization $\rkm$ of $\M$ with respect to $K$ (the chosen set of cells for $\rkm$).

\begin{definition}
A map $f:X\to Y$ is a \textit{$K$-colocal equivalence} if for every $A\in K$, the induced map $f_*:\map(A,X)\to \map(A,Y)$ is a weak equivalence. These maps will become the weak equivalences in $\rkm$.
\end{definition}

\begin{remark}
If $\M$ is a simplicial model category then one can use the simplicial mapping space instead of the homotopy function complex. For general $\M$, one often needs framings. However, in this paper we have avoided the need for framings by proving in our examples of interest (all of which are \textit{closed} symmetric monoidal categories) that one can use the internal Hom in place of $\map$ above.
\end{remark}

\begin{remark}
If $K$ is not a set of cofibrant objects, one can still define $\rkm$, but it agrees with $R_{K'}(\M)$ where $K'$ is a set of cofibrant replacements for every $A\in K$. Thus, it is no loss to assume $K$ consists of cofibrant objects from the start.
\end{remark}

\begin{definition}
An object $W$ is \textit{$K$-colocal} if $W$ is cofibrant and for every $K$-colocal equivalence $f:X\to Y$, the induced map $f_*:\map(W,X)\to \map(W,Y)$ is a weak equivalence.
\end{definition}

\begin{definition}
The \textit{right Bousfield localization} of $\M$ with respect to $K$ is a model structure $\rkm$ on the underlying category of $\M$, whose weak equivalences are the $K$-colocal equivalences, whose fibrations are the same as those in $\M$, and whose cofibrations are defined via the left lifting property.
\end{definition}

This model structure $\rkm$ need not exist in general. The following is a distillation of Theorem 5.1.1 in \cite{hirschhorn} (with corrections from the errata). Recall that $\M$ is \textit{right proper} if pullbacks of weak equivalences along fibrations are weak equivalences.

\begin{theorem}[Hirschhorn]
Let $\M$ be a right proper, cellular model category and $K$ a set of objects. Then $\rkm$ exists and the cofibrant objects are the $K$-colocal objects of $\M$. If every object of $\M$ is fibrant then $\rkm$ is cofibrantly generated.
\end{theorem}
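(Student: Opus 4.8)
The statement is essentially Theorem~5.1.1 of \cite{hirschhorn} (as amended in the errata), so the plan is to reproduce the structure of that argument. The one genuinely free input is the behaviour of fibrations: since the fibrations of $\rkm$ are by definition exactly those of $\M$, the maps having the left lifting property against \emph{all} fibrations of $\rkm$ are precisely the trivial cofibrations of $\M$. Consequently the generating trivial cofibrations of $\M$ supply a functorial (trivial cofibration)--(fibration) factorization in $\rkm$; moreover a trivial cofibration of $\M$ is at once a cofibration of $\rkm$ and a $K$-colocal equivalence, so the two lifting axioms drop out by the standard retract argument once both factorizations are available. The two-out-of-three and retract axioms for $K$-colocal equivalences are immediate, since weak equivalences of spaces satisfy them and $\map(A,-)$ is homotopical for each $A \in K$.

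The substantive point is the remaining factorization, of an arbitrary map as a cofibration of $\rkm$ followed by a trivial fibration of $\rkm$ --- here a trivial fibration of $\rkm$ means a fibration of $\M$ that is \emph{also} a $K$-colocal equivalence, a strictly larger class than the trivial fibrations of $\M$, so that the cofibrations of $\rkm$ form a strictly smaller class than those of $\M$. To build it one manufactures, from $K$ together with the generating cofibrations of $\M$, a set $\Lambda(K)$ of cofibrations of $\rkm$ such that a map has the right lifting property against $\Lambda(K)$ precisely when it is such a trivial fibration. Roughly, one enlarges the generating cofibrations of $\M$ by maps built out of (cofibrant approximations to) the objects of $K$ and the cells of $\M$, engineered so that lifting against them simultaneously forces a map to be a fibration and forces each $\map(A,-)$ to carry it to a weak equivalence. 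Cellularity of $\M$ is exactly what guarantees that these data can be organized into a \emph{set} with small domains, so that Quillen's small object argument produces the factorization; the last lifting axiom then follows once $\Lambda(K)$-injectives have been identified with the trivial fibrations of $\rkm$.

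With the model structure in hand, the identification of its cofibrant objects with the $K$-colocal objects is a cell induction: a cofibrant object of $\rkm$ is a retract of a $\Lambda(K)$-cell complex, and one checks along the cellular filtration that such complexes are $K$-colocal, using that $K$-colocal objects are closed under the colimits occurring in cell attachments and that $\map(-,X)$ takes $K$-colocal equivalences to weak equivalences on them; conversely, factoring $\varnothing \to W$ in $\rkm$ and feeding the defining mapping-space property of a $K$-colocal object $W$ into the resulting cofibrant approximation $\tilde{W} \to W$ shows that this map is a weak equivalence of $\M$ that splits, whence $W$ is a retract of a $\rkm$-cofibrant object. Right properness of $\rkm$ follows from right properness of $\M$ together with the fact that each $\map(A,-)$ sends the homotopy pullback squares provided by $\M$ to homotopy pullback squares of spaces. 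Finally, if every object of $\M$ is fibrant then no fibrant replacements intervene in the construction of $\Lambda(K)$ and the mapping-space conditions are detected on a set of maps of bounded size, so $\rkm$ is cofibrantly generated; this is exactly where Hirschhorn's cardinality bookkeeping (and the errata corrections to it) are needed.

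The main obstacle will be the construction of $\Lambda(K)$ and the proof that the $\Lambda(K)$-injectives coincide with the fibrations that are $K$-colocal equivalences; everything else is either formal or a routine dualization of the left Bousfield localization story.
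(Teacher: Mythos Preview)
The paper does not supply its own proof of this statement: it is simply quoted as a ``distillation of Theorem~5.1.1 in \cite{hirschhorn} (with corrections from the errata)'' and then immediately generalized in the subsequent material (Definitions~\ref{def:right-localizable} and~\ref{def:weakly-right-localizable}, Theorem~\ref{rkm-exists}). So there is nothing to compare against beyond Hirschhorn's original argument, and your outline is a faithful sketch of that argument.

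Two small corrections. First, in your description of the set used for the second factorization you write ``from $K$ together with the generating cofibrations of $\M$''; it should be the generating \emph{trivial} cofibrations $\sJ$. The paper records this explicitly as $\Lambdakbar = \Lambda(K) \cup \sJ$ in \eqref{lambdak}: lifting against $\sJ$ forces a map to be a fibration, and lifting against $\Lambda(K)$ (built from cosimplicial resolutions of the objects of $K$) then forces the $K$-colocal equivalence condition. Second, your explanation of why ``all objects fibrant'' is needed for cofibrant generation is slightly off. It is not that fibrant replacements intervene in constructing $\Lambda(K)$; the small object argument runs regardless and always produces the desired factorization (this is where cellularity, or the weaker ``right localizable'' hypotheses the paper isolates, enters). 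The issue is rather that $\Lambdakbar$-injectivity is only \emph{a priori} equivalent to being a $K$-colocal trivial fibration when the target is fibrant, so without that hypothesis $\Lambdakbar$ need not detect the trivial fibrations of $\rkm$ on the nose and hence need not serve as a generating set.
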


Following \cite{christensen-isaksen}, we will avoid the need to assume $\M$ is cellular at any point. 

\begin{definition}
\label{def:right-localizable}
Suppose $\M$ is a model category.  We say that $\M$ is \emph{right localizable} if it satisfies the following conditions.
\begin{enumerate}
\item
$\M$ is right proper; i.e., pullbacks along fibrations preserve weak equivalences.
\item
There exists a set $\sJ$ of generating trivial cofibrations \cite{hirschhorn} (11.1.2(2)).  This means that:
\begin{enumerate}
\item
$\sJ$ permits the small object argument (i.e., the domains of the maps in $\sJ$ are $\sJ$-small);
\item
a map is a fibration precisely when it is right orthogonal to $\sJ$.
\end{enumerate}
\item
Every object in $\M$ is cofibration-small, i.e., small relative to the subclass of cofibrations.
\end{enumerate}
\end{definition}

Note that the definition of right localizable is only about the model category and is not about any set of objects in it.  Next we define weaker conditions on the model category that involve a set of cofibrant objects.  The essence of the next definition is that, in order to construct the right Bousfield localization, one needs a functorial factorization of each map into a $K$-colocal cofibration followed by a $K$-colocal trivial fibration.  Not surprisingly, this comes down to Quillen's small object argument.  The following definitions provide two different ways to use the small object argument to obtain the desired factorization, as discussed in \cite{hirschhorn} (5.2.3) and \cite{christensen-isaksen} (2.5).

\begin{definition}
\label{def:weakly-right-localizable}
Suppose $\M$ is a right proper model category, and $K$ is a set of cofibrant objects in $\M$.  
\begin{enumerate}
\item
We say that the pair $(\M,K)$ is \emph{type 1 right localizable} if it satisfies the following conditions.
\begin{enumerate}
\item
There exists a set $\sJ$ of generating trivial cofibrations.
\item
Define the sets
\begin{equation}
\label{lambdak}
\begin{split}
\Lambda(K) &= \Bigl\{\bA \otimes \ddeltan \to \bA \otimes \deltan ~|~ A \in K,\, n \geq 0 \Bigr\},\\
\Lambdakbar &= \Lambda(K) \cup \sJ,
\end{split}
\end{equation}
where $\bA$ is a cosimplicial resolution of $A \in K$ \cite{hirschhorn} (16.1.2(1)). Then $\Lambdakbar$ permits the small object argument, i.e., the domains of the maps in $\Lambdakbar$ are $\Lambdakbar$-small.
\end{enumerate}
\item
We say that the pair $(\M,K)$ is \emph{type 2 right localizable} if it satisfies the following conditions.
\begin{enumerate}
\item
There exists a regular cardinal $\kappa$ such that each object of $K$ is $\kappa$-small relative to cofibrations.
\item
Suppose given a diagram
\[
\nicexy{
X_0 \ar[r] \ar[drr] 
& X_1 \ar[r] \ar[dr] 
& \cdots \ar[r] 
& X_{\beta} \ar[r] \ar[dl] 
& \cdots\\
&& Y &&}
\]
in $\M$ in which the top row is a $\kappa$-sequence (with $\kappa$ as in the previous condition) of cofibrations, and the map $X_\beta \to Y$ is a fibration for each successor ordinal $\beta$.  Then the map $\colim_{\beta} X_{\beta} \to Y$ is also a fibration. 
\end{enumerate}
\item
We say that the pair $(\M,K)$ is \emph{right localizable} if it is either type 1 right localizable or type 2 right localizable. 
\end{enumerate}
\end{definition}

The definition of type 1 right localizable is extracted from \cite{hirschhorn} (Chapter 5), while the definition of type 2 right localizable is precisely \cite{christensen-isaksen} (2.4)

\begin{proposition}
Suppose $\M$ is a model category, and $K$ is a set of cofibrant objects.  If $\M$ is right localizable, then $(\M,K)$ is both type 1 right localizable and type 2 right localizable.  In particular, $(\M,K)$ is right localizable.
\end{proposition}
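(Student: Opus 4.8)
The plan is to verify, directly from the three conditions in the definition of \emph{right localizable}, each of the two conditions that define \emph{type 1 right localizable} and each of the two that define \emph{type 2 right localizable}; once both are established, the assertion that $(\M,K)$ is right localizable is immediate. The unifying observation is that the only sets of maps appearing here are assembled from cofibrations of $\M$, and that by hypothesis every object of $\M$ is small relative to cofibrations, so every domain in sight is automatically small relative to the relevant class of relative cell complexes.

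For type 1, condition (a) is literally condition (2) of \emph{right localizable}, so there is nothing to do. For condition (b) I would first observe that every map of $\Lambdakbar = \Lambdak \cup \sJ$ is a cofibration of $\M$: the maps of $\sJ$ are trivial cofibrations, and each map $\bA \otimes \ddeltan \to \bA \otimes \deltan$ of $\Lambdak$ is a cofibration because $\bA$ is a cosimplicial resolution, hence Reedy cofibrant, so $\bA \otimes -$ is a left Quillen functor out of simplicial sets (\cite{hirschhorn}, Chapter 16) and carries the cofibration $\ddeltan \to \deltan$ to a cofibration. Therefore every relative $\Lambdakbar$-cell complex is built from cofibrations by coproduct, pushout, and transfinite composition, hence is a cofibration. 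Since by condition (3) every object of $\M$ is small relative to cofibrations — in particular the domains $\bA \otimes \ddeltan$ of the maps in $\Lambdak$ and the domains of the maps in $\sJ$ — and since smallness relative to a class of maps implies smallness relative to any subclass, these domains are small relative to relative $\Lambdakbar$-cell complexes; that is, $\Lambdakbar$ permits the small object argument.

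For type 2, condition (a) uses that $K$ and $\sJ$ are \emph{sets}: by condition (3) each object of $K$, and each domain of a map of $\sJ$, is $\kappa$-small relative to cofibrations for some cardinal depending on the object, so I may choose a single regular cardinal $\kappa$ dominating all of these; in particular each object of $K$ is then $\kappa$-small relative to cofibrations. For condition (b), suppose given a $\kappa$-sequence $X_0 \to X_1 \to \cdots$ of cofibrations together with a compatible cocone to $Y$ whose legs $X_\beta \to Y$ are fibrations for successor $\beta$; I want $\colim_\beta X_\beta \to Y$ to be a fibration. By condition (2) it suffices to produce a lift in any square against a map $j \colon C \to D$ of $\sJ$. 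Because $C$ is $\kappa$-small relative to cofibrations and the sequence consists of cofibrations, the composite $C \to \colim_\beta X_\beta$ factors through some $X_\beta$ with $\beta < \kappa$; after replacing $\beta$ by $\beta+1$ I may take $\beta$ to be a successor, so $X_\beta \to Y$ is a fibration, hence right orthogonal to $\sJ$, and the induced square over $X_\beta$ admits a lift $D \to X_\beta$ whose composite with $X_\beta \to \colim_\beta X_\beta$ is the desired lift. This shows $\colim_\beta X_\beta \to Y$ is a fibration, completing type 2; and $(\M,K)$ is right localizable since it is in fact both type 1 and type 2.

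The one place that requires genuine care is the cardinal bookkeeping in type 2: it is essential that both $K$ and $\sJ$ are sets, so that a single regular $\kappa$ can be chosen large enough both to make condition (a) hold \emph{and} to make the domains of $\sJ$ small enough for the ``factor through a stage'' step in condition (b); without arranging the latter the lifting argument would collapse. A minor secondary point is to invoke from \cite{hirschhorn} (Chapter 16) that a cosimplicial resolution is Reedy cofibrant, which is precisely what makes the maps of $\Lambdak$ honest cofibrations of $\M$.
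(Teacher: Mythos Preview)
Your proof is correct and follows essentially the same approach as the paper's. The paper argues type~1 exactly as you do (every map in $\Lambdakbar$ is a cofibration, so cofibration-smallness of all objects gives the small object argument), and for type~2 it simply says to choose $\kappa$ so that both the objects of $K$ and the domains of $\sJ$ are $\kappa$-small relative to cofibrations, citing \cite{christensen-isaksen} for the verification of condition~(b); you have spelled out that lifting argument explicitly, which is a welcome addition rather than a deviation.
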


\begin{proof}
By assumption $\M$ is right proper and has a set $\sJ$ of generating trivial cofibrations, and every object in $\M$ is cofibration-small.  That $\Lambdakbar$ is $\Lambdakbar$-small follows from the fact that every map in $\Lambdakbar$ is a cofibration in $\M$ (see the proof of \cite{hirschhorn} 5.2.5) and the fact that $K$-colocal cofibrations are, in particular, cofibrations in $\M$.  So the pair $(\M,K)$ is type 1 right localizable.

To see that $(\M,K)$ is type 2 right localizable, as mentioned in \cite{christensen-isaksen} (under 2.4), just choose $\kappa$ such that the domains in $\sJ$ and the objects in $K$ are $\kappa$-small relative to cofibrations.
\end{proof}

The following general existence result of right Bousfield localization is proved in \cite{hirschhorn} (Chapter 5, in particular 5.1.1 and 5.1.2) and, with a variation of the argument, in \cite{christensen-isaksen} (Section 2).

\begin{theorem}
\label{rkm-exists}
Suppose $\M$ is a model category, and $K$ is a set of cofibrant objects such that $(\M,K)$ is right localizable.  Then the following statements hold.
\begin{enumerate}
\item
The right Bousfield localization $\rkm$ exists \cite{hirschhorn} (3.3.1).
\item
The cofibrant objects in $\rkm$ are precisely the $K$-colocal objects in $\M$.
\item
$\rkm$ is right proper.
\item
If every object in $\M$ is fibrant and if $\M$ is cofibrantly generated by $(\sI,\sJ)$, then $\rkm$ is cofibrantly generated by $\left(\Lambdakbar, \sJ\right)$.
\item
If $\M$ is a simplicial model category, then $\rkm$ inherits a simplicial model category structure.
\end{enumerate}
\end{theorem}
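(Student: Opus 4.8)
The plan is to dispose of part~(1) by invoking the two established constructions — Hirschhorn's \cite{hirschhorn} (Chapter~5) in the type~1 case and Christensen--Isaksen's \cite{christensen-isaksen} (Section~2) in the type~2 case — and then to read off parts~(2)--(5) from the construction. In either case the $K$-colocal equivalences manifestly satisfy two-out-of-three and are closed under retracts, and the fibrations are declared to be those of $\M$, so the real content is the two functorial factorizations, after which the lifting axioms are formal.

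In the type~1 case one runs Quillen's small object argument on the set $\Lambdakbar$ of \eqref{lambdak}; this is legitimate precisely because $(\M,K)$ is type~1 right localizable, and it produces a factorization of any map into a relative $\Lambdakbar$-cell complex followed by a $\Lambdakbar$-injective. One must then check that relative $\Lambdakbar$-cell complexes are $K$-colocal cofibrations and that $\Lambdakbar$-injectives are exactly the fibrations of $\M$ that are $K$-colocal equivalences; this is where the cosimplicial resolutions $\bA$ entering $\Lambda(K)$ do their work. The other factorization — a $K$-colocal trivial cofibration followed by a fibration — is simply the factorization already present in $\M$, since every trivial cofibration of $\M$ is in particular a $K$-colocal trivial cofibration (the $K$-colocal equivalences contain the weak equivalences of $\M$). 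In the type~2 case one argues as in \cite{christensen-isaksen}: the $\kappa$-smallness hypothesis of Definition~\ref{def:weakly-right-localizable} is exactly what permits the small object argument, and the transfinite-composition-of-fibrations hypothesis is what keeps the construction inside the class of fibrations.

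For~(2), $W$ is cofibrant in $\rkm$ iff $\varnothing \to W$ lifts against every $K$-colocal trivial fibration, i.e. against every fibration of $\M$ that is a $K$-colocal equivalence; one checks as in \cite{hirschhorn} (Chapter~5) that this is equivalent to $W$ being cofibrant in $\M$ with $\map(W,-)$ carrying $K$-colocal equivalences to weak equivalences, i.e. to $W$ being $K$-colocal. For~(3), the fibrations of $\rkm$ are those of $\M$, and a pullback of a $K$-colocal equivalence along such a fibration is again one: apply $\map(A,-)$ for $A \in K$ and use that right properness of $\M$ makes the square a homotopy pullback, which $\map(A,-)$ preserves, together with right properness of simplicial sets. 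For~(4), when $\M$ is cofibrantly generated by $(\sI,\sJ)$ and every object is fibrant, the fibrations of $\rkm$ are the $\sJ$-injectives and the $\Lambdakbar$-injectives are the $K$-colocal trivial fibrations; fibrancy of every object is what forces each generator $\bA \otimes \ddeltan \to \bA \otimes \deltan$ of $\Lambda(K)$ to be a $K$-colocal equivalence, so that $\Lambdakbar$-cell complexes are $K$-colocal trivial cofibrations, and a standard retract argument identifies the cofibrations (resp. trivial cofibrations) of $\rkm$ with the $\Lambdakbar$-cofibrations (resp. $\sJ$-cofibrations); this is \cite{hirschhorn} (5.1.1) together with the errata. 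For~(5), $\rkm$ has the same underlying simplicial structure as $\M$, and the pushout-product axiom for $\rkm$ follows from that for $\M$: trivial fibrations of $\rkm$ are in particular trivial fibrations of $\M$, while fibrations agree in the two structures, so both halves of the axiom transfer; cf. \cite{hirschhorn} (Chapter~3).

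The main obstacle is the factorization claim inside part~(1) — concretely, the identification of $\Lambdakbar$-injectives with the fibrations of $\M$ that are $K$-colocal equivalences, equivalently that a relative $\Lambdakbar$-cell complex built over a $K$-colocal trivial fibration is itself a $K$-colocal equivalence. This is exactly what the bookkeeping with cosimplicial resolutions in $\Lambda(K)$, or in the type~2 route the hypothesis on transfinite composites of fibrations, is designed to handle, and it is the reason the hypotheses were isolated as they were in Definitions~\ref{def:right-localizable} and~\ref{def:weakly-right-localizable}.
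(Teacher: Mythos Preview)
Your overall approach matches the paper's, which gives no argument of its own and simply cites \cite{hirschhorn} (Chapter~5, especially 5.1.1 and 5.1.2) for the type~1 case and \cite{christensen-isaksen} (Section~2) for the type~2 case. Your expanded sketch of parts (1)--(3) is accurate and in the spirit of those references.

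However, two of your explanatory remarks are wrong as stated. In part~(4) you write that ``fibrancy of every object is what forces each generator $\bA \otimes \ddeltan \to \bA \otimes \deltan$ of $\Lambda(K)$ to be a $K$-colocal equivalence, so that $\Lambdakbar$-cell complexes are $K$-colocal trivial cofibrations.'' This is backwards: $\Lambdakbar$ is the set of generating \emph{cofibrations} of $\rkm$, not generating trivial cofibrations, and the maps in $\Lambda(K)$ are certainly not $K$-colocal equivalences in general. The genuine role of the hypothesis that every object is fibrant is rather that the characterization ``$\Lambdakbar$-injective $=$ fibration $+$ $K$-colocal equivalence'' only holds on the nose when one can compute homotopy function complexes without first fibrantly replacing; without fibrancy one only gets this for maps with fibrant target, which is not enough to run the retract argument identifying $\rkm$-cofibrations with retracts of $\Lambdakbar$-cell complexes.

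In part~(5) you assert that ``trivial fibrations of $\rkm$ are in particular trivial fibrations of $\M$.'' The inclusion goes the other way: trivial fibrations of $\M$ are trivial fibrations of $\rkm$, but a fibration that is merely a $K$-colocal equivalence need not be a weak equivalence in $\M$. Consequently the case of SM7 where $p$ is a trivial fibration in $\rkm$ does \emph{not} transfer directly from $\M$; one must actually argue that cotensoring a $K$-colocal trivial fibration with a cofibration of simplicial sets yields a $K$-colocal equivalence, e.g.\ by applying $\map(A,-)$ for $A\in K$ and using the simplicial compatibility of homotopy function complexes, as in \cite{hirschhorn}. Your citation covers this, but your stated reason does not.
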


Graphically, the above conditions are related as follows:
\[
\nicexy{
& (\M,K)\, \mathrm{type\, 1\, r.l.} \ar@/^1pc/@{=>}[dr] &\\
\M\, \mathrm{right\, localizable} \ar@/^1pc/@{=>}[ur] \ar@/_1pc/@{=>}[dr]
&& \rkm\, \mathrm{ exists}\\
& (\M,K)\, \mathrm{type\, 2\, r.l.} \ar@/_1pc/@{=>}[ur] &
}\]
However, to ensure that $\rkm$ is cofibrantly generated, one needs to assume a little bit more.

\section{Monoidality in Right Bousfield Localization}
\label{sec:monoidal}

In this section, we address the question of when $\rkm$ is a \textit{monoidal} model category, i.e., satisfies the pushout product axiom.

\begin{definition} \label{def:monoidal-right-bous}
A right Bousfield localization $R_K$ is said to be a \textit{monoidal right Bousfield localization} if $\rkm$ satisfies the pushout product axiom. The \textit{smallest monoidal right Bousfield localization} for a given $(\M,K)$ is a monoidal model structure $\rkm^{\otimes}$ on $\M$ such that the identity id$:\M \to \rkm^{\otimes}$ is the initial monoidal right Quillen functor to a monoidal model category taking $K$-colocal equivalences to weak equivalences
\end{definition}

This universal property is the monoidal analogue of \cite{hirschhorn} (3.3.18) and the dual of \cite{white-localization} (4.8). 

\begin{definition}
\label{def:rkm-ppa}
Suppose:
\begin{itemize}
\item
$\M$ is a model category with a set $\sJ$ of generating trivial cofibrations.
\item
$K$ is a set of cofibrant objects in $\M$.
\end{itemize}
Define the following condition.
\begin{quote}
$\$$ : If $A$ is a domain or a codomain of a map in $\Lambdakbar = \Lambdak \cup \sJ$ \eqref{lambdak}, then the functor
\[
\nicexy@C+.7cm{\M \ar[r]^-{\Hom(A,-)} & \M}
\]
takes $K$-colocal equivalences between fibrant objects to $K$-colocal equivalences.
\end{quote}
We will say that \emph{$(\M,K)$ satisfies \$} if this condition holds.
\end{definition}

Recall once again that $\M$ and $\rkm$, if it exists, have the same fibrations and also the same trivial cofibrations.  In particular, in $\$$ and everywhere else it is not necessary to specify whether an object is fibrant in $\M$ or fibrant in $\rkm$.

\begin{theorem}
\label{rkm-ppa}
Suppose:
\begin{itemize}
\item
$\M$ is a cofibrantly generated monoidal model category in which every object is fibrant.
\item
$K$ is a set of cofibrant objects in $\M$ such that $(\M,K)$ is right localizable.
\item
All the domains and codomains of the maps in $\Lambdakbar$ are $K$-colocal objects.
\end{itemize}
Then $\rkm$ is a right proper, cofibrantly generated, monoidal model category if and only if $\$$ holds.
\end{theorem}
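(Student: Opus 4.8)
The plan is to isolate the pushout product axiom as the only nontrivial point and to prove the stated equivalence, everything else being supplied by Theorem~\ref{rkm-exists}. First I would record that, under the standing hypotheses, $\rkm$ exists, is right proper, and — since every object of $\M$ is fibrant and $\M$ is cofibrantly generated — is cofibrantly generated by $(\Lambdakbar,\sJ)$ (Theorem~\ref{rkm-exists}). Hence the cofibrations of $\rkm$ are the maps in $\Lambdakbar\text{-cof}$, its trivial fibrations are exactly the maps right orthogonal to $\Lambdakbar$ (the $K$-colocal trivial fibrations), and, because $\M$ and $\rkm$ share their fibrations, they share their trivial cofibrations, which are the maps in $\sJ\text{-cof}$. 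So the only axiom still to be decided is the pushout product axiom; I would also note that once it holds the unit axiom is automatic, since then $\tensorunit^{c}\otimes-$ is left Quillen for a $K$-colocal replacement $\tensorunit^{c}\xrightarrow{\sim}\tensorunit$ of the unit and Ken Brown's lemma applies.

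For the forward implication, suppose $\rkm$ satisfies the pushout product axiom. For any $K$-colocal object $A$ (equivalently, any object cofibrant in $\rkm$), the pushout product of the cofibration $\varnothing\to A$ with a map $f$ is $A\otimes f$, so the pushout product axiom forces $A\otimes-\colon\rkm\to\rkm$ to preserve cofibrations and trivial cofibrations; thus $A\otimes-$ is left Quillen and $\Hom(A,-)\colon\rkm\to\rkm$ is right Quillen. By Ken Brown's lemma a right Quillen functor preserves weak equivalences between fibrant objects, and since every object of $\M$ (hence of $\rkm$) is fibrant and the weak equivalences of $\rkm$ are the $K$-colocal equivalences, $\Hom(A,-)$ preserves $K$-colocal equivalences. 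Taking $A$ to be any domain or codomain of a map of $\Lambdakbar$ — these are $K$-colocal by hypothesis — yields $\$$.

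For the converse I would assume $\$$ and invoke the standard reduction of the pushout product axiom for the cofibrantly generated category $\rkm$ to the three inclusions $\Lambdakbar\boxprod\Lambdakbar\subseteq\Lambdakbar\text{-cof}$, $\Lambdakbar\boxprod\sJ\subseteq\sJ\text{-cof}$ and $\sJ\boxprod\Lambdakbar\subseteq\sJ\text{-cof}$. The last two are immediate from the pushout product axiom in $\M$: every map of $\Lambdakbar$ is a cofibration of $\M$, every map of $\sJ$ a trivial cofibration of $\M$, so each such pushout product is a trivial cofibration of $\M$ and therefore lies in $\sJ\text{-cof}$ (the trivial cofibrations of $\M$ and $\rkm$ coinciding). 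For the remaining inclusion, fix $f,g\in\Lambdakbar$ and a $K$-colocal trivial fibration $p\colon X\to Y$; by the pushout-product/pullback-hom adjunction, $f\boxprod g$ lifts against $p$ if and only if $f$ lifts against the pullback corner map $\Hom^{\square}(g,p)$, so — $f$ being a cofibration of $\rkm$ — it is enough to prove that $\Hom^{\square}(g,p)$ is again a $K$-colocal trivial fibration.

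This last point is the heart of the argument and the step I expect to be the main obstacle. That $\Hom^{\square}(g,p)$ is a fibration of $\M$ follows from the pushout product axiom in $\M$, since $g$ is a cofibration of $\M$ and $p$ a fibration of $\M$. To see it is a $K$-colocal equivalence, I would write $g\colon U\to V$: since $Y$ is fibrant, $\Hom(g,Y)\colon\Hom(V,Y)\to\Hom(U,Y)$ is a fibration of $\M$ (the pullback corner of $g$ against the fibration $Y\to\ast$), and applying $\$$ to the $K$-colocal objects $U$ and $V$ and to the $K$-colocal equivalence $p$ between fibrant objects shows that $\Hom(U,p)$ and $\Hom(V,p)$ are $K$-colocal equivalences. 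The projection $\pi\colon\Hom(V,Y)\times_{\Hom(U,Y)}\Hom(U,X)\to\Hom(V,Y)$ is then the base change of $\Hom(U,p)$ along the fibration $\Hom(g,Y)$, hence a $K$-colocal equivalence by right properness of $\rkm$; and since $\pi\circ\Hom^{\square}(g,p)=\Hom(V,p)$, two-out-of-three gives that $\Hom^{\square}(g,p)$ is a $K$-colocal equivalence, completing the proof. Apart from this base-change/two-out-of-three step — which is exactly where $\$$ enters, at both endpoints of $g$ — the whole argument is formal manipulation with cofibrantly generated and right-localized model structures.
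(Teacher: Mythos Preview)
Your proposal is correct and follows essentially the same architecture as the paper: both reduce everything to the pushout product axiom, both handle the ``only if'' direction by observing that cofibrancy of the (co)domains of $\Lambdakbar$ makes $\Hom(A,-)$ right Quillen on $\rkm$ and then invoke Ken Brown, and both handle the ``if'' direction by reducing to generating cofibrations (your $f,g\in\Lambdakbar$; the paper's citation of \cite{hovey} 4.2.5), then showing the relevant pullback corner map is a $K$-colocal trivial fibration via the factorization through the pullback and $2$-out-of-$3$.

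The one genuine difference is in how the pullback leg is controlled. You use $\$$ to get that $\Hom(U,p)$ is a $K$-colocal equivalence and then invoke right properness of $\rkm$ to transport it along the fibration $\Hom(g,Y)$. The paper instead first upgrades $\$$, via \cite{hirschhorn} 3.3.18(2), to the statement that $\Hom(A,-)\colon\rkm\to\rkm$ is right Quillen; this makes $\Hom(A,p)$ a \emph{trivial fibration} in $\rkm$, so its pullback $p^{\#}$ is a trivial fibration with no appeal to right properness. Your route is more self-contained (it uses $\$$ exactly as stated), while the paper's route yields a slightly stronger intermediate conclusion and avoids the properness appeal. Either way the argument closes identically with $2$-out-of-$3$. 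Your aside on the unit axiom is something the paper does not address; note that your sketch for it (``$\tensorunit^{c}\otimes-$ is left Quillen and Ken Brown'') is not quite the right functor to apply Ken Brown to, since the map you need to control is $(-\otimes X)$ applied to $\tensorunit^{c}\to\tensorunit$, and $\tensorunit$ need not be cofibrant in $\rkm$.
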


\begin{remark}
This theorem demonstrates that the smallest monoidal Bousfield localization for a given right-localizable $(\M,K)$ has colocal objects generated by $K' = K \cup D$, where $D$ is the set of domains and codomains of maps in $\Lambdakbar$, and has weak equivalences the closure of the $K$-colocal equivalences under $\Hom(A,-)$ for $A \in K'$. Remark \ref{remark:monoidal-right-top} demonstrates this for the example of topological spaces. As remarked in \cite{gutierrez-transfer-quillen}, the existence of this model structure can be verified by carrying through the exposition in \cite{hirschhorn} or in \cite{barwickSemi} using Hom instead of $\map$, and Theorem 2.12 in \cite{gutierrez-roitzheim} provides an alternative construction in case $\M$ is combinatorial.
\end{remark}

\begin{proof}[Proof of Theorem \ref{rkm-ppa}]
First note that by Theorem \ref{rkm-exists}, $\rkm$ exists and is a right proper, cofibrantly generated model category with generating cofibrations $\Lambdakbar$.  So we must show that $\rkm$ satisfies the pushout product axiom if and only if $\$$ holds.

For the \textit{if} direction, suppose that $\$$ holds, $j:A\to B$ is a cofibration in $\rkm$, and $p:X\to Y$ is a fibration.  We write $\Hom$ for the internal hom in $\M$.  We must show that the pullback corner map $(j,p)$ in 
\begin{equation}
\label{pullback-jp}
\nicexy@C+.5cm{\hom(B,X) \ar[dr]|-{(j,p)} \ar@/^1pc/[drr]^-{\Hom(B,p)} \ar@/_1pc/[ddr] & & \\
& PB \ar[r]^-{\psharp} \ar[d] & \hom(B,Y)\ar[d] \\
& \hom(A,X) \ar[r]^-{\Hom(A,p)} & \hom(A,Y)}
\end{equation}
is a fibration that is also a $K$-colocal equivalence if either $j$ or $p$ is such.   Here
\[
PB = \Hom(A,X) \timesover{\Hom(A,Y)} \Hom(B,Y)
\]
is the pullback.
\begin{enumerate}
\item
Observe that $\rkm$ has fewer cofibrations than $\M$, so $j$ is also a cofibration in $\M$.  So $(j,p)$ is a fibration by the pushout product axiom on $\M$.
\item
We turn next to the case where $j$ is a trivial cofibration in $\rkm$, hence also in $\M$ because $\M$ and $\rkm$ have the same trivial cofibrations.   By the pushout product axiom on $\M$, $(j,p)$ is a trivial fibration in $\M$, hence also in $\rkm$.
\item
Lastly, suppose $p$ is a trivial fibration in $\rkm$. We already know that $(j,p)$ is a fibration by our first case above.  So by \cite{hovey} (4.2.5) it is sufficient to check that $(j,p)$ is a $K$-colocal equivalence for $j \in \Lambdakbar$, the set of generating cofibrations of $\rkm$.  By the $2$-out-of-$3$ property of $K$-colocal equivalences \cite{hirschhorn} (3.2.3(2)) it is enough to check that both $\psharp$ and $\Hom(B,p)$ in \eqref{pullback-jp} are $K$-colocal equivalences.

Since $A$ is the domain of $j \in \Lambdakbar$, it is by assumption a $K$-colocal object, in particular a cofibrant object in $\M$.  Recall that $\M$ and $\rkm$ have the same fibrations, and a trivial fibration in $\M$ is also a trivial fibration in $\rkm$.  So the pushout product axiom on $\M$ implies that
\begin{equation}
\label{homa-to-rkm}
\nicexy@C+10pt{
\rkm \ar@<2pt>[r]^-{-\otimes A} & \M \ar@<2pt>[l]^-{\Hom(A,-)}}
\end{equation}
is a Quillen pair because the right adjoint $\Hom(A,-)$ preserves fibrations and trivial fibrations.

Next note that, by the $2$-out-of-$3$ property of $K$-colocal equivalences, a fibrant approximation in $\M$ to a $K$-colocal equivalence is a $K$-colocal equivalence between fibrant objects.  The assumption $\$$ says that $\Hom(A,-)$ in \eqref{homa-to-rkm} takes $K$-colocal equivalences between fibrant objects to $K$-colocal equivalences.  So 3.3.18(2) in \cite{hirschhorn} now says that 
\[
\nicexy@C+10pt{\rkm \ar[r]^-{\hom(A,-)} &\rkm}
\]
is a right Quillen functor. Thus, the bottom horizontal map $\Hom(A,p)$ in \eqref{pullback-jp} is a trivial fibration in $\rkm$ because $p$ is a trivial fibration in $\rkm$.   It follows that its pullback $\psharp$ is also a trivial fibration in $\rkm$, hence a $K$-colocal equivalence.  

Likewise, since $B$ is the codomain of $j \in \Lambdakbar$, the map $\hom(B,p)$ in \eqref{pullback-jp} is a trivial fibration in $\rkm$, hence a $K$-colocal equivalence, because
\[
\nicexy@C+10pt{\rkm \ar[r]^-{\hom(B,-)} &\rkm}
\]
is a right Quillen functor and $p$ is a trivial fibration in $\rkm$. Thus, by the $2$-out-of-$3$ property in the top triangle, the pullback corner map $(j,p)$ is a $K$-colocal equivalence as required.
\end{enumerate}
We have shown that $\rkm$ satisfies the pushout product axiom.

Next, for the \textit{only if} direction, suppose $\rkm$ satisfies the pushout product axiom.  To prove $\$$, suppose $A$ is the domain or the codomain of a map in $\Lambdakbar$.  Then $A$ is cofibrant in $\rkm$ by assumption.  The pushout product axiom on $\rkm$ implies that
\[
\nicexy@C+10pt{\rkm \ar[r]^-{\hom(A,-)} &\rkm}
\]
preserves trivial fibrations in $\rkm$.  By Ken Brown's Lemma (\cite{hovey} 1.1.12), this functor takes all weak equivalences in $\rkm$ between fibrant objects to weak equivalences in $\rkm$.  This is the condition $\$$.
\end{proof}

The last assumption in Theorem \ref{rkm-ppa} says that all the domains and codomains of the maps in $\Lambdakbar$ are $K$-colocal objects.  We next address the question of when this condition holds.

\begin{theorem}
\label{rkm-tractable}
Suppose:
\begin{itemize}
\item
$\M$ is a cofibrantly generated, monoidal, \underline{simplicial} model category in which every object is fibrant.
\item
$K$ is a set of cofibrant objects in $\M$ such that $(\M,K)$ is right localizable.
\end{itemize}
Then the following statements are equivalent.
\begin{enumerate}
\item
All the generating trivial cofibrations in $\M$ have $K$-colocal domains.
\item
All the domains and codomains of the maps in $\Lambdakbar$ are $K$-colocal objects.
\item
The generating cofibrations and generating trivial cofibrations in $\rkm$ have $K$-colocal domains .
\end{enumerate}
\end{theorem}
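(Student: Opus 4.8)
The plan is to prove the cycle $(1)\Rightarrow(2)\Rightarrow(3)\Rightarrow(1)$; all of the real content sits in $(1)\Rightarrow(2)$. Throughout I will invoke Theorem~\ref{rkm-exists}: since $\M$ is cofibrantly generated by $(\sI,\sJ)$ and every object of $\M$ is fibrant, $\rkm$ exists, is cofibrantly generated with generating cofibrations $\Lambdakbar$ and generating trivial cofibrations $\sJ$, and its cofibrant objects are exactly the $K$-colocal objects; so I may freely interchange ``$K$-colocal'' with ``cofibrant in $\rkm$''. The implication $(2)\Rightarrow(3)$ is then immediate: the generating cofibrations of $\rkm$ are the maps in $\Lambdakbar$, with domains $K$-colocal by $(2)$, and the generating trivial cofibrations of $\rkm$ are the maps in $\sJ\subseteq\Lambdakbar$, whose domains are again among the domains of maps in $\Lambdakbar$. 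And $(3)\Rightarrow(1)$ is immediate because $\sJ$ is simultaneously the set of generating trivial cofibrations of $\M$ and of $\rkm$, so the ``generating trivial cofibrations in $\rkm$'' half of $(3)$ is verbatim statement $(1)$.

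For $(1)\Rightarrow(2)$ I would split the domains and codomains of maps in $\Lambdakbar=\Lambdak\cup\sJ$ into those coming from $\Lambdak$ and those coming from $\sJ$. The $\Lambdak$-group turns out to be $K$-colocal \emph{unconditionally}: for $A\in K$ and $n\geq 0$ I claim both $\bA\otimes\ddeltan$ and $\bA\otimes\deltan$ are cofibrant in $\rkm$. The mechanism is that $\bA\otimes(-)\colon\sset\to\M$ --- the coend against the cosimplicial resolution $\bA$, which when $\M$ is simplicial may be taken to be $A\otimes\Delta[\bullet]$, so that $\bA\otimes(-)$ is just the simplicial tensoring $A\otimes(-)$ --- is a left adjoint and hence preserves all colimits. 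The skeletal filtrations of $\ddeltan$ and $\deltan$ build each skeleton from the previous by pushing out a coproduct of copies of a boundary inclusion $\ddelta[k]\subseteq\Delta[k]$ with $k<n$; applying $\bA\otimes(-)$, which carries each such inclusion to a map of $\Lambdak$, then exhibits $\emptyset\to\bA\otimes\ddeltan$ and $\emptyset\to\bA\otimes\deltan$ as $\Lambdak$-cell complexes, by induction on $n$ (the base case $n=0$ being immediate since $\ddelta[0]=\emptyset$ and $\emptyset\to\bA^0$ is literally a map of $\Lambdak$). Since $\Lambdak\subseteq\Lambdakbar$, these are cofibrations of $\rkm$ out of the initial object, so their targets are cofibrant in $\rkm$, i.e.\ $K$-colocal.

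The $\sJ$-group is where hypothesis $(1)$ is used. Let $j\colon C\to D$ be a map in $\sJ$. By $(1)$, $C$ is $K$-colocal, hence cofibrant in $\rkm$. Recall that $\M$ and $\rkm$ have the same fibrations, hence the same trivial cofibrations (in any model category these are precisely the maps with the left lifting property against all fibrations), so $j$ is a cofibration of $\rkm$; composing the two $\rkm$-cofibrations $\emptyset\to C$ and $j\colon C\to D$ shows $D$ is cofibrant in $\rkm$, hence $K$-colocal. Combined with the $\Lambdak$-group, this makes every domain and codomain of a map in $\Lambdakbar$ into a $K$-colocal object, which is $(2)$, closing the cycle.

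The step I expect to demand the most care is the one realizing $\bA\otimes\ddeltan$ and $\bA\otimes\deltan$ as $\Lambdak$-cell complexes: one must verify that $\bA\otimes(-)$ genuinely commutes with the pushouts arising in the skeletal filtration, organize the induction on $n$ (and, inside each stage, on the skeletal degree), observe that attaching a coproduct of copies of a single generating cofibration can be performed one cell at a time, and handle the degenerate base case $n=0$. Everything else is routine bookkeeping with the definitions of right Bousfield localization, cofibrant generation, and cofibrancy.
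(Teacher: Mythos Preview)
Your proof is correct and follows the same cycle $(1)\Rightarrow(2)\Rightarrow(3)\Rightarrow(1)$ as the paper, with identical arguments for the two easy implications. The only difference is in the key step of $(1)\Rightarrow(2)$, showing that $\bA\otimes\ddeltan$ is $K$-colocal: the paper observes that, since $\rkm$ is itself simplicial and $A$ is cofibrant in $\rkm$, the chosen $\bA=A\otimes\Delta[\bullet]$ is Reedy cofibrant in $(\rkm)^{\Delta}$, and then invokes \cite{hirschhorn} (16.3.9(1)) as a black box to conclude $\bA\otimes\ddeltan$ is cofibrant in $\rkm$. Your skeletal-filtration argument is precisely what that citation unpacks to, so the two arguments are the same in content; yours is more self-contained, the paper's is shorter. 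One small efficiency you missed: since every map in $\Lambdakbar$ is already a cofibration in $\rkm$, once the domains are $K$-colocal the codomains follow automatically, so you need not treat $\bA\otimes\deltan$ separately.
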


\begin{proof}
By Theorem \ref{rkm-exists}, the right Bousfield localization $\rkm$ is a right proper, simplicial model category and is cofibrantly generated by $(\Lambdakbar,\sJ)$, where $\sJ$ is the set of generating trivial cofibrations in $\M$.  We will prove (2) $\Longrightarrow$ (3) $\Longrightarrow$ (1) $\Longrightarrow$ (2).

(2) $\Longrightarrow$ (3).  If (2) is true, then all the maps in $\sJ \subseteq \Lambdakbar$ also have $K$-colocal domains.

(3) $\Longrightarrow$ (1). If (3) is true, then all the maps in $\sJ$ ($=$ generating trivial cofibrations in $\rkm$) have $K$-colocal domains.

(1) $\Longrightarrow$ (2). Suppose (1) is true, so all the maps in $\sJ$ have $K$-colocal domains.  To prove (2), since every map in $\Lambdakbar$ is a cofibration in $\rkm$, it is enough to show that the domain of each map in $\Lambdakbar$ is $K$-colocal.  Moreover, since $\Lambdakbar = \Lambdak \cup \sJ$, it remains to show that maps in $\Lambdak$ have $K$-colocal domains.  In other words, we must show that each $\bA \otimes \ddeltan$ is a $K$-colocal object for $A \in K$.

For each $A \in K$, recall that $\bA$ is a cosimplicial resolution of $A$ in $\Mdelta$.  Using the simplicial structure on $\M$, we now choose its cosimplicial resolution as follows:
\[
\bA = \Bigl\{\bA^n = A \otimes \deltan : n \geq 0\Bigr\}.
\]
This $\bA$ is indeed a cosimplicial resolution of $A$ in $\Mdelta$ by \cite{hirschhorn} (16.1.4(1)) because $A$ is cofibrant in $\M$ by assumption.  But note that $A$ is also $K$-colocal ($=$ cofibrant in $\rkm$) and that $\rkm$ inherits its simplicial model structure from $\M$.  So by \cite{hirschhorn} (16.1.4(1)) applied to $\rkm$, the same $\bA$ is also a cosimplicial resolution of $A$ in $\rkmdelta$.  In particular, $\bA \in \rkmdelta$ is Reedy cofibrant.  So \cite{hirschhorn} (16.3.9(1)) applied to $\rkm$ says that $\bA \otimes \ddeltan$ is a cofibrant object in $\rkm$, hence $K$-colocal as desired.
\end{proof}

\begin{corollary}
\label{rkm-monoidal}
Suppose:
\begin{itemize}
\item
$\M$ is a cofibrantly generated, monoidal, simplicial model category in which every object is fibrant.
\item
$K$ is a set of cofibrant objects in $\M$ such that $(\M,K)$ is right localizable.
\item
All the generating trivial cofibrations in $\M$ have $K$-colocal domains.
\end{itemize}
Then $\rkm$ is a right proper, cofibrantly generated, monoidal, simplicial model category if and only if $\$$ holds.
\end{corollary}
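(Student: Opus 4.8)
The plan is to obtain this corollary by splicing together Theorem~\ref{rkm-tractable} and Theorem~\ref{rkm-ppa}, with Theorem~\ref{rkm-exists}(5) supplying the simplicial enrichment. First I would observe that the three bulleted hypotheses of the corollary are exactly the two bulleted hypotheses of Theorem~\ref{rkm-tractable} together with its statement~(1). Applying that theorem, the equivalence (1)~$\Longleftrightarrow$~(2) yields that all the domains and codomains of the maps in $\Lambdakbar$ are $K$-colocal objects.

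Next I would invoke Theorem~\ref{rkm-ppa}, whose hypotheses are: $\M$ is a cofibrantly generated monoidal model category in which every object is fibrant; $K$ is a set of cofibrant objects with $(\M,K)$ right localizable; and all domains and codomains of maps in $\Lambdakbar$ are $K$-colocal. The first two are among the corollary's hypotheses, and the third was just produced in the previous step. Hence Theorem~\ref{rkm-ppa} gives that $\rkm$ is a right proper, cofibrantly generated, monoidal model category if and only if $\$$ holds.

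It remains to upgrade ``model category'' to ``simplicial model category'' in both directions. For the \textit{if} direction: assuming $\$$, the previous step gives $\rkm$ as a right proper, cofibrantly generated, monoidal model category, and since $\M$ is simplicial and $(\M,K)$ is right localizable, Theorem~\ref{rkm-exists}(5) says that $\rkm$ also inherits a simplicial model structure, so we are done. For the \textit{only if} direction: if $\rkm$ is a right proper, cofibrantly generated, monoidal, simplicial model category, then in particular it is a right proper, cofibrantly generated, monoidal model category, so Theorem~\ref{rkm-ppa} forces $\$$. I do not expect a serious obstacle here; the only step needing care is the bookkeeping check that the corollary's hypothesis on the generating trivial cofibrations of $\M$ really does feed Theorem~\ref{rkm-tractable}, i.e., that one is entitled to use its condition~(1) rather than needing the superficially stronger condition~(2) on $\Lambdakbar$ directly --- but that is precisely the implication (1)~$\Longrightarrow$~(2) established in the proof of that theorem.
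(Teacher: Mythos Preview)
Your proposal is correct and follows essentially the same approach as the paper, which simply states that the corollary follows from Theorems~\ref{rkm-exists}, \ref{rkm-ppa}, and \ref{rkm-tractable}. You have merely spelled out in detail how these three results combine: Theorem~\ref{rkm-tractable} converts the hypothesis on generating trivial cofibrations into the $\Lambdakbar$ hypothesis needed for Theorem~\ref{rkm-ppa}, and Theorem~\ref{rkm-exists}(5) supplies the simplicial enrichment.
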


\begin{proof}
This follows from Theorems \ref{rkm-exists}, \ref{rkm-ppa}, and \ref{rkm-tractable}.
\end{proof}

\section{Model Structure on Algebras in Right Bousfield Localization}
\label{sec:model-on-algebras}

In this section we provide sufficient conditions under which all colored operads in $\rkm$ are admissible.

The following definitions and examples are from \cite{bm03,bm07}.

\begin{definition}
\label{def:interval}
Suppose $\M$ is a model category that is also a symmetric monoidal closed category with $\otimes$-unit $I$.
\begin{enumerate}
\item
A \emph{coalgebra interval} consists of a counital comonoid object $H$ in $\M$ together with a factorization
\begin{equation}
\label{coalgebra-interval}
\nicexy{I \amalg I \ar[r]^-{(i_0, i_1)} & H \ar[r]^-{\varepsilon} & I}
\end{equation}
of the fold map of $I$, in which:
\begin{itemize}
\item
both maps $(i_0,i_1)$ and $\varepsilon$ are maps of comonoids;
\item
$(i_0,i_1)$ is a cofibration;
\item
$\varepsilon$ is a weak equivalence.
\end{itemize}
\item
A \emph{cocommutative coalgebra interval} is a coalgebra interval $H$ whose comonoid structure is cocommutative.
\end{enumerate}
\end{definition}

\begin{example}
In the context of the previous definition:
\begin{enumerate}
\item
Familiar examples of monoidal model categories admitting a cocommutative coalgebra interval include \cite{bm03} the categories of compactly generated Hausdorff spaces, simplicial sets, simplicial (pre)sheaves, and symmetric spectra. Less familiar examples include small categories (by analogy with spaces) and cochain complexes concentrated in non-negative degrees over a commutative ring with unit \cite{richter} (Section 8).
\item
The category of unbounded chain complexes over a commutative ring with unit admits a coalgebra interval that is \emph{not} cocommutative \cite{bm03} (3.3.3).
\end{enumerate}
\end{example}

\begin{definition}
Suppose $\M$ is a model category, and $T$ is a monad on $\M$.
\begin{enumerate}
\item
A \emph{fibrant $T$-algebra} in $\M$ is a $T$-algebra whose underlying object is fibrant in $\M$.
\item
For $X \in \algtm$, a \emph{path object} in $\M$ is a factorization
\begin{equation}
\label{path-object}
\nicexy{X \ar[r]^-{\sim} & \Path(X) \ar@{>>}[r] & X \times X}
\end{equation}
in $\algtm$ of the diagonal map in which the first map is a  weak equivalence in $\M$ and the second map is a fibration in $\M$.
\end{enumerate}
\end{definition}

Taking internal hom out of a coalgebra interval leads to functorial path objects just as in \cite{bm03} (proof of 3.1) and \cite{bm07} (proof of 2.1):

\begin{lemma}
\label{interval-path-object}
Suppose $\M$ is a monoidal model category with a cofibrant $\otimes$-unit and a coalgebra interval.
\begin{enumerate}
\item
If $\sO$ is a $\fC$-colored non-symmetric operad in $\M$, then fibrant $\sO$-algebras in $\M$ have functorial path objects in $\M$.
\item
If the coalgebra interval is furthermore cocommutative and if $\sO$ is a $\fC$-colored operad in $\M$, then fibrant $\sO$-algebras in $\M$ have functorial path objects in $\M$.
\end{enumerate}
\end{lemma}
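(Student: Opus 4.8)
The plan is to construct the path object functorially by taking the internal hom out of the given coalgebra interval, as in the proofs of \cite{bm03}~(3.1) and \cite{bm07}~(2.1), and then to check that every resulting map is a map of $\sO$-algebras with the correct homotopical behavior. Write the coalgebra interval (Definition~\ref{def:interval}) as $\tensorunit \amalg \tensorunit \xrightarrow{(i_0,i_1)} H \xrightarrow{\varepsilon} \tensorunit$, and let $X = \{X_c\}_{c\in\fC}$ be a fibrant $\sO$-algebra. First I would set $\Path(X)_c = \Hom(H, X_c)$ for each color $c$ and equip the $\fC$-colored object $\Path(X)$ with an $\sO$-algebra structure. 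The mechanism is the usual one: since $H$ is a counital comonoid, $\Hom(H,-)\colon \M \to \M$ is lax monoidal --- the lax structure map $\Hom(H,M)\otimes\Hom(H,N) \to \Hom(H,M\otimes N)$ is adjoint to the composite of the comultiplication $H \to H\otimes H$ with the two evaluations, the unit $\tensorunit \to \Hom(H,\tensorunit)$ is adjoint to $\varepsilon$, and coassociativity and counitality of $H$ give the coherence axioms --- and it carries a tensorial strength $A\otimes\Hom(H,M) \to \Hom(H,A\otimes M)$ adjoint to evaluation. A lax monoidal functor with such a strength takes algebras over a $\fC$-colored non-symmetric operad to algebras over the same operad, applied colorwise: concretely, it transports the structure maps $\sO\circ X \to X$ of \eqref{acircleb-nonsym} to structure maps $\sO\circ\Path(X)\to\Path(X)$ (keeping everything concentrated at $0$ by Lemma~\ref{sobjectatzero}), with coassociativity and counitality of $H$ together with the operad axioms of $\sO$ yielding the algebra axioms. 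The construction is evidently natural in $X$, so part~(1) reduces to the homotopical checks below.

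For part~(2), where $\sO$ is symmetric, I would use instead that a \emph{cocommutative} counital comonoid $H$ makes $\Hom(H,-)$ lax \emph{symmetric} monoidal, and that a lax symmetric monoidal functor with strength preserves algebras over symmetric colored operads. The one extra point to verify is that the transported structure maps are equivariant for the $\Sigma_{[\uc]}$-actions appearing in the $\tensorover{\Sigma_{[\uc]}}$ of Definition~\ref{coloredcircleproduct} (which one can unwind with Lemma~\ref{tensor-over-sigmac}); this comes down to the iterated comultiplication $H \to H^{\otimes m}$ being $\Sigma_m$-equivariant, with $\Sigma_m$ acting trivially on $H$ and by permuting tensor factors on $H^{\otimes m}$, which is exactly cocommutativity of $H$. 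I expect this equivariance bookkeeping --- reconciling the left Kan extensions and $\tensorover{\Sigma_{[\uc]}}$'s of the colored circle product with the cocommutative comonoid structure --- to be the main technical obstacle; in the non-symmetric setting of Definition~\ref{non-sym-circle-product} it does not arise, which is precisely why part~(1) needs only a possibly non-cocommutative coalgebra interval.

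Next I would produce the factorization. Applying $\Hom(-,X_c)$ colorwise to $\tensorunit\amalg\tensorunit \xrightarrow{(i_0,i_1)} H \xrightarrow{\varepsilon} \tensorunit$ and using $\Hom(\tensorunit, X_c)\cong X_c$ and $\Hom(\tensorunit\amalg\tensorunit, X_c)\cong X_c\times X_c$ gives
\[
X \xrightarrow{\Hom(\varepsilon, X)} \Path(X) \xrightarrow{\Hom((i_0,i_1), X)} X\times X.
\]
Since $\varepsilon$ and $(i_0,i_1)$ are maps of comonoids and the $\sO$-algebra structure on $\Path(X)$ was built functorially from the comonoid structure of $H$, both maps lie in $\algom$. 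Because $\varepsilon i_0 = \varepsilon i_1 = \id_{\tensorunit}$ (the two restrictions of the fold map of $\tensorunit$), the composite $X \to X\times X$ is the diagonal.

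Finally I would verify the homotopical properties, using fibrancy of $X$. The pushout product axiom for the monoidal model category $\M$, applied with the terminal object as codomain, shows that $\Hom(-,X_c)\colon \M^{\op}\to\M$ is a right Quillen functor for each $c$. Now $\tensorunit$ is cofibrant by hypothesis, hence so is $\tensorunit\amalg\tensorunit$, and since $(i_0,i_1)$ is a cofibration, $H$ is cofibrant in $\M$ as well; thus $\varepsilon\colon H\to\tensorunit$ is a weak equivalence between objects cofibrant in $\M$ --- i.e.\ fibrant in $\M^{\op}$ --- so Ken Brown's Lemma (\cite{hovey}~1.1.12) makes $\Hom(\varepsilon, X_c)$ a weak equivalence in $\M$ for each $c$, whence $X\to\Path(X)$ is a weak equivalence. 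On the other hand $(i_0,i_1)$ is a cofibration and each $X_c$ is fibrant, so $\Hom((i_0,i_1), X_c)\colon \Path(X)_c\to X_c\times X_c$ is a fibration in $\M$ for every $c$, hence $\Path(X)\to X\times X$ is a fibration in $\M$. This exhibits $\Path(X)$ as a functorial path object for $X$ in $\M$, establishing both (1) and (2).
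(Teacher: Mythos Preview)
Your proof is correct and is precisely the Berger--Moerdijk argument the paper invokes: the paper does not prove Lemma~\ref{interval-path-object} itself but merely cites \cite{bm03}~(proof of 3.1) and \cite{bm07}~(proof of 2.1), and your construction---$\Path(X)_c=\Hom(H,X_c)$ with the $\sO$-algebra structure coming from the lax (symmetric) monoidality of $\Hom(H,-)$, and the homotopical properties from the pushout product axiom together with Ken Brown---is exactly that argument spelled out in the colored setting. One minor phrasing point: calling $\Hom(-,X_c)\colon\M^{\op}\to\M$ ``right Quillen'' is a slight abuse since there is no evident left adjoint, but the substance (it takes trivial cofibrations in $\M$ to trivial fibrations, hence by Ken Brown takes weak equivalences between $\M$-cofibrant objects to weak equivalences) is what you use and is correct.
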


\begin{proposition}
\label{path-object-rkm}
Suppose:
\begin{itemize}
\item
$\M$ is a monoidal model category with a cofibrant $\otimes$-unit and a coalgebra interval.
\item
$K$ is a class of cofibrant objects in $\M$ such that $\rkm$ exists.
\end{itemize}
Then the following statements hold.
\begin{enumerate}
\item
If $\sO$ is a $\fC$-colored non-symmetric operad in $\M$, then fibrant $\sO$-algebras in $\rkm$ have functorial path objects in $\rkm$.
\item
If the coalgebra interval is furthermore cocommutative and if $\sO$ is a $\fC$-colored operad in $\M$, then fibrant $\sO$-algebras in $\rkm$ have functorial path objects in $\rkm$.
 \end{enumerate}
\end{proposition}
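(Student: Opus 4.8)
The plan is to bootstrap off Lemma~\ref{interval-path-object}, which already produces functorial path objects in $\M$, and to observe that essentially nothing relevant changes upon passing from $\M$ to $\rkm$. Recall that $\rkm$ has the same underlying category as $\M$, hence the same symmetric monoidal closed structure (in particular the same internal hom $\Hom$), the same fibrations, and a \emph{larger} class of weak equivalences: every weak equivalence of $\M$ is a $K$-colocal equivalence, since a homotopy function complex $\map(A,-)$ carries weak equivalences to weak equivalences. Consequently, for a $\fC$-colored (non-symmetric) operad $\sO$ in $\M$, the category $\algorkm$ of $\sO$-algebras in $\rkm$ coincides with $\algom$, because the defining monad $\sO \circ -$ is unchanged; and an $\sO$-algebra is fibrant in $\rkm$ precisely when it is fibrant in $\M$, since $\rkm$ and $\M$ share their fibrations.

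With these identifications in hand I would argue as follows. For part~(1), let $X$ be a fibrant $\sO$-algebra in $\rkm$. By the previous paragraph $X$ is then a fibrant $\sO$-algebra in $\M$, so Lemma~\ref{interval-path-object}(1), which uses only the existence of a coalgebra interval, supplies a functorial factorization
\[
\nicexy{X \ar[r]^-{\sim} & \Path(X) \ar@{>>}[r] & X \times X}
\]
in $\algom = \algorkm$ of the diagonal, in which the first map is a weak equivalence in $\M$ and the second is a fibration in $\M$. I claim this same factorization is a path object in $\rkm$: the second map is a fibration in $\rkm$ because the fibrations of $\rkm$ and of $\M$ agree, and the first map is a weak equivalence in $\rkm$ because every weak equivalence of $\M$ is a $K$-colocal equivalence. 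Functoriality is inherited verbatim, since we have used the functorial path object of Lemma~\ref{interval-path-object} without alteration. Part~(2) is identical, with Lemma~\ref{interval-path-object}(1) replaced by Lemma~\ref{interval-path-object}(2) and the hypothesis ``coalgebra interval'' strengthened to ``cocommutative coalgebra interval,'' which is exactly the condition needed for $\Hom(H,-)$ to preserve $\sO$-algebra structure when $\sO$ is a symmetric rather than a non-symmetric operad.

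There is no genuinely hard step here; the content is the bookkeeping of which pieces of structure $\M$ and $\rkm$ share (underlying category, monoidal closed structure, fibrations) and which they do not (weak equivalences, cofibrations), together with the one-sided comparison that the weak equivalences of $\M$ sit inside those of $\rkm$. The one point worth a sentence of care is that we never require $\rkm$ itself to be a monoidal model category: all the homotopical input --- that $\Hom(\varepsilon,X)$ is a weak equivalence and $\Hom((i_0,i_1),X)$ a fibration whenever $X$ is fibrant --- is extracted inside $\M$ through Lemma~\ref{interval-path-object} and then transported along the identifications above, so the pushout product axiom for $\rkm$, which may well fail, is never invoked.
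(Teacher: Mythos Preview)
Your proof is correct and follows essentially the same approach as the paper: both apply Lemma~\ref{interval-path-object} to the fibrant $\sO$-algebra $X$ (which is fibrant in $\M$ since $\rkm$ and $\M$ share fibrations), and then observe that the resulting path object in $\M$ is automatically a path object in $\rkm$ because fibrations coincide and weak equivalences of $\M$ are $K$-colocal equivalences. Your additional remarks about not needing $\rkm$ to be monoidal are apt and in fact anticipate the paper's own Remark following the proof.
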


\begin{proof}
Suppose $\sO$ is either a $\fC$-colored non-symmetric operad or a $\fC$-colored operad in $\M$, and $X$ is a fibrant $\sO$-algebra in $\rkm$.  This means that $X$ is an $\sO$-algebra with each colored entry fibrant in $\rkm$, i.e., fibrant in $\M$.  So $X$ is also a fibrant $\sO$-algebra in $\M$.  By Lemma \ref{interval-path-object} it has a path object \eqref{path-object} in $\M$ that is functorial in $X$.  In this path object, the first map is a weak equivalence in $\M$, hence also a weak equivalence in $\rkm$ \cite{hirschhorn} (3.3.3(2)(a)).  The second map is a fibration in $\M$, hence also a fibration in $\rkm$ by definition.  So this is actually a path object in $\rkm$.
\end{proof}

\begin{remark}
We are \emph{not} asserting that the $\otimes$-unit is cofibrant in $\rkm$ or that a coalgebra interval in $\M$ is also one in $\rkm$.  The reason is that not every cofibration in $\M$ is a cofibration in $\rkm$.  Luckily, what we truly need is a functorial path object for fibrant algebras, and fibrations in $\M$ and $\rkm$ are the same.
\end{remark}

The following general transfer result is a slight modification of \cite{bm07} (2.1).    It is a colored variant of \cite{bm03} (3.2 and remark afterwards), which in turn is essentially a consequence of \cite{ss} (2.3(2)).  The proof is basically the same as in \cite{bm07}, where it is assumed that there be a (cocommutative) coalgebra interval.  Here we assume directly that fibrant algebras have functorial path objects.

\begin{theorem}
\label{berger-moerdijk}
Suppose:
\begin{itemize}
\item
$\M$ is a cofibrantly generated monoidal model category with a symmetric monoidal fibrant replacement functor. 
\item
$\sO$ is either a  $\fC$-colored operad in $\M$ or a $\fC$-colored non-symmetric operad in $\M$.
\item
fibrant $\sO$-algebras have functorial path objects.
\item 
the domains of maps in $\sO \comp \sI$ (resp. $\sO \comp \sJ$) are small with respect to relative $(\sO \comp \sI)$-cell complexes (resp. relative $(\sO \comp \sJ)$-cell complexes).
\end{itemize}
Then the following statements hold.
\begin{enumerate}
\item
$\algom$ admits a cofibrantly generated model structure with weak equivalences and fibrations defined entrywise in $\M$.
\item
The sets of generating cofibrations and of generating trivial cofibrations in $\algom$ are $\sO \comp \sI$ and $\sO \comp \sJ$, respectively, where $(\sI,\sJ)$ are the sets of generating (trivial) cofibrations in $\Mtoc$ (see \cite{hirschhorn} 11.1.10).
\item
The free-forgetful adjunction
\begin{equation}
\label{operadalg-freeforget}
\nicexy{\Mtoc \ar@<2pt>[r]^-{\sO \comp -} & \algom \ar@<2pt>[l]^-{U}}
\end{equation}
is a Quillen adjunction.
\end{enumerate}
\end{theorem}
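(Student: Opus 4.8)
The plan is to obtain the model structure on $\algom$ from the standard transfer principle along the free--forgetful adjunction \eqref{operadalg-freeforget}, reducing the whole argument to the path object argument of \cite{ss}, \cite{bm03} and \cite{bm07}; the only way the present hypotheses differ from those of \cite{bm07} is that functorial path objects for fibrant algebras are assumed directly instead of being built from a (cocommutative) coalgebra interval. First I would record the inputs: by Proposition \ref{algebras-colimits} the category $\algom$ is complete and cocomplete, while $\Mtoc = \prod_{\fC}\M$ carries the entrywise cofibrantly generated model structure with generating (trivial) cofibrations $(\sI,\sJ)$ by \cite{hirschhorn} (11.1.10). By the transfer principle for cofibrantly generated model categories (\cite{hirschhorn} 11.3.2; \cite{ss} 2.3), and using the stated smallness hypothesis on the domains of $\sO\comp\sI$ and $\sO\comp\sJ$, the category $\algom$ admits the transferred cofibrantly generated model structure --- with weak equivalences and fibrations detected by the forgetful functor $U$, and with $\sO\comp\sI$ and $\sO\comp\sJ$ as generating cofibrations and generating trivial cofibrations --- \emph{provided} every relative $(\sO\comp\sJ)$-cell complex is sent by $U$ to a weak equivalence in $\Mtoc$. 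Granting this acyclicity statement, (1) and (2) are immediate, and (3) holds because $U$ preserves fibrations and trivial fibrations by construction; so the entire proof reduces to the acyclicity statement.

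Next I would equip $\algom$ with a fibrant replacement functor. The symmetric monoidal fibrant replacement functor $L$ on $\M$ is in particular lax symmetric monoidal (in the non-symmetric operad case only its lax monoidal structure is used), so it carries $\sO$ to an operad $L\sO$ and $\sO$-algebras to $L\sO$-algebras; restricting along the operad map $\sO \to L\sO$ induced by the monoidal natural transformation $\id \to L$ then produces a functor $R\colon \algom \to \algom$ together with a natural transformation whose components $r_X\colon X \to RX$ are maps of $\sO$-algebras that are weak equivalences in $\Mtoc$ and have fibrant target. By hypothesis, fibrant $\sO$-algebras have functorial path objects \eqref{path-object}, and since $U$ creates limits (Proposition \ref{algebras-colimits}) and sends fibrations of $\algom$ to fibrations of $\Mtoc$, applying $U$ to such a path object yields a path object in $\Mtoc$. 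These are the two ingredients needed for the path object argument.

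Then I would run that argument. Let $g\colon X \to Y$ be a relative $(\sO\comp\sJ)$-cell complex; by adjointness, and because fibrations of $\algom$ are detected by $U$ while $\sJ$ consists of generating trivial cofibrations, $g$ has the left lifting property against every fibration of $\algom$. Let $r_Y\colon Y \to RY$ be the corresponding map for $Y$ and $Rg\colon RX \to RY$ the induced map, so $Rg\, r_X = r_Y\, g$. Lifting $r_X$ against the fibration $RX \to *$ (a fibration because $RX$ is fibrant) in the square whose left edge is $g$ produces $h\colon Y \to RX$ with $h\, g = r_X$. Write $s\colon RY \to \Path(RY)$ and $(d_0,d_1)\colon \Path(RY) \to RY \times RY$ for the two maps of the path object of the fibrant algebra $RY$, so that $s$ is a weak equivalence, $(d_0,d_1)$ is a fibration, and $d_0 s = d_1 s = \id_{RY}$. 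The square
\[
\nicexy@C+.6cm{
X \ar[r]^-{s\, r_Y\, g} \ar[d]_-{g} & \Path(RY) \ar[d]^-{(d_0,d_1)}\\
Y \ar[r]_-{(r_Y,\, Rg\, h)} & RY \times RY
}
\]
commutes, using $h\, g = r_X$ and $Rg\, r_X = r_Y\, g$, so $g$ lifts against the fibration $(d_0,d_1)$ to a map $H\colon Y \to \Path(RY)$ with $d_0 H = r_Y$ and $d_1 H = Rg\, h$. Now pass to $\Mtoc$ by applying $U$, which I suppress from the notation: there $d_0$ and $d_1$ are weak equivalences (two-out-of-three against $s$), so $r_Y = d_0 H$ being a weak equivalence forces $H$, and hence $Rg\, h = d_1 H$, to be weak equivalences, while $r_X = h\, g$ is a weak equivalence; the two-out-of-six property of weak equivalences in $\Mtoc$ (which holds in any model category), applied to the composable triple $g, h, Rg$, then shows $g$ is a weak equivalence. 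This is the acyclicity statement, and the theorem follows.

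I expect the acyclicity step to be the main obstacle: one must confirm that taking functorial path objects for fibrant algebras as a hypothesis, rather than extracting them from a coalgebra interval as in \cite{bm07}, still lets the path object argument run verbatim, and one must check that the induced fibrant replacement on $\algom$ is constructed correctly and uniformly for colored operads and for colored non-symmetric operads. Everything else --- the transfer principle, the limit and colimit statements of Proposition \ref{algebras-colimits} proved in \cite{white-yau}, and the two-out-of-three / two-out-of-six bookkeeping --- is routine.
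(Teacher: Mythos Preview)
Your proposal is correct and takes essentially the same approach as the paper: the paper does not spell out a proof but simply says the argument is that of \cite{bm07} (2.1), itself a colored variant of \cite{bm03} (3.2) and \cite{ss} (2.3(2)), with the only change being that functorial path objects for fibrant algebras are assumed directly rather than produced from a (cocommutative) coalgebra interval. You have unwound exactly that argument---transfer principle, fibrant replacement on algebras via the symmetric monoidal fibrant replacement, and Quillen's path object argument concluding with two-out-of-six---so there is nothing to add.
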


We now apply Theorem \ref{berger-moerdijk} to right Bousfield localization $\rkm$.

\begin{theorem}
\label{cor:strongly-preserve-operad}
Suppose $\M$ is a cofibrantly generated, monoidal, simplicial model category in which every object is fibrant.  Furthermore, suppose:
\begin{itemize}
\item
It has a cofibrant $\otimes$-unit and a coalgebra interval $H$.
\item
$K$ is a set of cofibrant objects in $\M$ such that $(\M,K)$ is right localizable.
\item
All the generating trivial cofibrations in $\M$ have $K$-colocal domains.
\item
It satisfies the condition $\$$ in Definition \ref{def:rkm-ppa}.
\end{itemize}
Then the following statements hold.
\begin{enumerate}
\item
If $\sO$ is a $\fC$-colored non-symmetric operad in $\M$ such that the domains of maps in $\sO \comp \sI_K$ (resp. $\sO \comp \sJ$) are small with respect to relative ($\sO \comp \sI_K)$-cell complexes (resp. relateive $(\sO \comp \sJ)$-cell complexes), then $\algorkm$ admits a cofibrantly generated model structure such that:
\begin{enumerate}
\item
Weak equivalences and fibrations in $\algorkm$ are defined entrywise in $\rkm$.
\item
The sets of generating cofibrations and of generating trivial cofibrations in $\algorkm$ are $\sO \comp \sI_K$ and $\sO \comp \sJ_K$, respectively, where $(\sI_K,\sJ_K)$ are the sets of generating (trivial) cofibrations in $\rkmc$.
\item
The free-forgetful adjunction
\begin{equation}
\label{free-forgetful-o-rkm}
\nicexy{(\rkm)^{\fC} \ar@<2pt>[r]^-{\sO \comp -} & \algorkm \ar@<2pt>[l]^-{U}}
\end{equation}
is a Quillen adjunction.
\end{enumerate}
\item
Suppose the coalgebra interval $H$ is cocommutative.  Then the conclusions of the previous case hold for every $\fC$-colored operad in $\M$.
\end{enumerate}
\end{theorem}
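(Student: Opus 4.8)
The plan is to deduce both statements from Theorem~\ref{berger-moerdijk} applied with the ambient monoidal model category taken to be $\rkm$ in place of $\M$; the work is entirely in checking that $\rkm$, together with $\sO$, satisfies the four hypotheses of that theorem.

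First I would invoke Corollary~\ref{rkm-monoidal} to conclude that $\rkm$ is a right proper, cofibrantly generated, monoidal, simplicial model category: its hypotheses ($\M$ cofibrantly generated monoidal simplicial with every object fibrant, $(\M,K)$ right localizable, the generating trivial cofibrations of $\M$ having $K$-colocal domains, and the condition $\$$) are exactly what is assumed here. By Theorem~\ref{rkm-exists}(4) and \cite{hirschhorn} (11.1.10), $\rkmc$ is then cofibrantly generated; write $(\sI_K,\sJ_K)$ for its generating (trivial) cofibrations. Since $\rkm$ and $\M$ have the same trivial cofibrations, $\sJ_K$ is assembled from $\sJ$ in the standard way, so the smallness hypothesis imposed on $\sO$ in the present statement is precisely the smallness hypothesis of Theorem~\ref{berger-moerdijk} for the pair $(\rkm,\sO)$.

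Next I would supply the two remaining inputs to Theorem~\ref{berger-moerdijk}. A symmetric monoidal fibrant replacement functor on $\rkm$ is free of charge: $\M$ and $\rkm$ have the same fibrations, so every object of $\rkm$ is fibrant and the identity functor works. For functorial path objects on fibrant $\sO$-algebras I would appeal to Proposition~\ref{path-object-rkm}, whose hypotheses hold because $\M$ is monoidal with cofibrant $\otimes$-unit and a coalgebra interval $H$, and because right localizability of $(\M,K)$ guarantees via Theorem~\ref{rkm-exists}(1) that $\rkm$ exists; this covers part (1). For part (2) I would instead use the cocommutative version, Proposition~\ref{path-object-rkm}(2), valid for $\fC$-colored (symmetric) operads once $H$ is cocommutative. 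With all four hypotheses verified, Theorem~\ref{berger-moerdijk} delivers a cofibrantly generated model structure on $\algorkm$ with entrywise weak equivalences and fibrations, generating (trivial) cofibrations $\sO\comp\sI_K$ and $\sO\comp\sJ_K$, and Quillen free-forgetful adjunction \eqref{free-forgetful-o-rkm} --- i.e., statements (a), (b), (c).

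The one point that requires care, and the reason the argument is routed through Proposition~\ref{path-object-rkm} rather than through the coalgebra-interval form of Berger--Moerdijk, is that $\rkm$ need not itself admit a coalgebra interval: the map $(i_0,i_1)$ is a cofibration in $\M$ but possibly not in $\rkm$, and the $\otimes$-unit need not be cofibrant in $\rkm$ (cf.\ the remark after Proposition~\ref{path-object-rkm}). What survives is exactly what is needed --- functorial path objects for \emph{fibrant} algebras --- and this survives precisely because fibrations, hence fibrant objects, coincide in $\M$ and $\rkm$. Beyond this, the proof is bookkeeping: matching the notation $(\sI,\sJ)$ of Theorem~\ref{berger-moerdijk} with $(\sI_K,\sJ_K)$ here.
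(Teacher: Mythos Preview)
Your proposal is correct and follows essentially the same route as the paper: invoke Corollary~\ref{rkm-monoidal} to make $\rkm$ a cofibrantly generated monoidal (simplicial) model category with all objects fibrant, use Proposition~\ref{path-object-rkm} to supply functorial path objects for fibrant $\sO$-algebras in $\rkm$, and then apply Theorem~\ref{berger-moerdijk}. Your additional remarks on the symmetric monoidal fibrant replacement being the identity and on why one must go through Proposition~\ref{path-object-rkm} rather than a coalgebra interval in $\rkm$ simply make explicit what the paper leaves implicit.
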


\begin{proof}
By Corollary \ref{rkm-monoidal} $\rkm$ is a right proper, cofibrantly generated, monoidal, simplicial model category in which every object is fibrant (because every object in $\M$ is fibrant).  By Proposition \ref{path-object-rkm} fibrant $\sO$-algebras in $\rkm$ have functorial path objects in $\rkm$.  Therefore, Theorem \ref{berger-moerdijk} applies to $\rkm$ to yield the desired conclusions.
\end{proof}

\begin{remark}
\label{rk:subsume}
By Lemma \ref{interval-path-object} the hypotheses of Theorem \ref{cor:strongly-preserve-operad} subsume those of Theorem  \ref{berger-moerdijk}.  So when Theorem \ref{cor:strongly-preserve-operad} is applicable, then so is Theorem  \ref{berger-moerdijk}. 
\end{remark}

Denote by $\Top$ either the category of compactly generated spaces or the category of compactly generated weak Hausdorff spaces.  For a finite group $G$, denote by $\Top^G$ the respective category of $G$-equivariant spaces and maps.

\begin{proposition} \label{prop:smallness-in-top}
If $\M$ is a combinatorial model category or if $\M$ is either $\Top$ or $\Top^G$ for a compact Lie group $G$, then the smallness hypotheses in Theorems \ref{berger-moerdijk} and \ref{cor:strongly-preserve-operad} are satisfied.
\end{proposition}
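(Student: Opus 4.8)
The plan is to reduce, in both cases, the smallness of the domains of the maps in $\sO \comp \sI$ and $\sO \comp \sJ$ (and, in the localized setting, in $\sO \comp \sI_K$ and $\sO \comp \sJ_K$) to a smallness statement in the underlying diagram category $\Mtoc$ — equivalently $(\rkm)^{\fC}$, which has the same underlying category — and then to establish that statement directly. The reduction uses the free--forgetful adjunction $\sO \comp - \dashv U$ of \eqref{operadalg-freeforget} together with the fact, recorded in Proposition \ref{algebras-colimits}(2), that $U$ preserves filtered colimits. Concretely, if $X$ is a domain of a map in $\sI$ (resp.\ $\sJ$, $\sI_K$, $\sJ_K$), then for any $\kappa$-filtered ordinal $\lambda$ and any $\lambda$-sequence $\{Y_\beta\}$ in which each $Y_\beta \to Y_{\beta+1}$ is a relative $(\sO \comp \sI)$-cell complex (resp.\ the other cases),
\begin{align*}
\colim_{\beta} \Hom_{\algom}(\sO \comp X, Y_\beta)
&= \colim_{\beta} \Hom_{\Mtoc}(X, UY_\beta) \\
&= \Hom_{\Mtoc}\left(X,\, \colim_{\beta} UY_\beta\right) \\
&= \Hom_{\algom}\left(\sO \comp X,\, \colim_{\beta} Y_\beta\right),
\end{align*}
where the first and last equalities come from the adjunction (the last also using that $U$ preserves filtered colimits) and the middle one holds precisely when $X$ is $\kappa$-small in $\Mtoc$ relative to the maps $Uf$, with $f$ ranging over such relative cell complexes. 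So it suffices to (i) identify which maps of $\Mtoc$ occur as underlying maps of such relative cell complexes, and (ii) verify that every relevant domain $X$ is small relative to those maps.

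In the combinatorial case, step (ii) is immediate and step (i) is unnecessary: $\Mtoc = \M^{\fC}$, a product of copies of the locally presentable category $\M$, is again locally presentable, so \emph{every} object of $\Mtoc$ is small relative to \emph{all} morphisms. (Equivalently, $\sO \comp -$ is an accessible monad, so that $\algom$ and $\algorkm$ are themselves locally presentable.)

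For $\M = \Top$ or $\Top^G$ with $G$ finite, step (ii) is the classical smallness of compactly generated spaces: every object of $\Top$ is $\kappa$-small relative to the subcategory of closed $T_1$ inclusions for any regular $\kappa$ larger than its cardinality (\cite{hovey} (2.4.1); see also \cite{hirschhorn}), and this extends coordinatewise to $\Top^{\fC}$ and verbatim to finite-group-equivariant spaces. For step (i), I would appeal to the pushout filtration for the monad $\sO \comp -$ (see \cite{white-yau}; cf.\ \cite{bm07}, \cite{harper-jpaa}): the underlying map in $\Mtoc$ of a pushout of $\sO \comp i$ along a map of $\sO$-algebras is a countable composite of pushouts in $\Mtoc$ of maps built from $i$ by iterated pushout--products, tensors with objects of $\M$, coproducts, and orbits under finite subgroups of symmetric groups. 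When $i$ is a generating (trivial) cofibration of $\Top$ or $\Top^G$ — or, more generally, any map in $\Lambdakbar = \sI_K$, which by the argument preceding Theorem \ref{rkm-tractable} is always a cofibration in $\M$ and hence a closed $T_1$ inclusion in compactly generated (equivariant) spaces — each of these operations preserves closed $T_1$ inclusions, so their transfinite composites are again closed $T_1$ inclusions. Combining (i) and (ii) yields the smallness hypotheses of Theorems \ref{berger-moerdijk} and \ref{cor:strongly-preserve-operad} in the topological case.

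The step I expect to be the main obstacle is (i) for $\Top$ and $\Top^G$: verifying that free operadic-algebra cell extensions are, on underlying (equivariant) spaces, built from closed $T_1$ inclusions. This rests on the explicit pushout filtration for $\sO \comp -$ and on the stability of closed $T_1$ inclusions in compactly generated spaces under pushout--products, tensors, coproducts, and finite-group orbits — the last of these requiring a little care but being standard. Everything else, namely the adjunction reduction and the combinatorial case, is formal.
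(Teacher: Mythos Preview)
Your argument is correct and shares the paper's overall shape: the combinatorial case is trivial, and for $\Top$ or $\Top^G$ one reduces via the free--forgetful adjunction to smallness relative to (closed) inclusions, invoking \cite{hovey} (2.4.1). The difference lies in how the underlying maps in $\Top^{\fC}$ of relative $(\sO \comp \sI)$-cell complexes---formed in $\algom$, not in $\Top^{\fC}$---are shown to be (closed) inclusions. You invoke the pushout filtration for operadic pushouts, which exhibits the underlying map of an algebra pushout of $\sO \comp i$ as a sequential colimit of pushouts of iterated pushout-products, tensors, and finite-group coinvariants of $i$; since $i$ is a (closed) inclusion and these operations preserve (closed) inclusions, you are done. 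The paper instead works directly with the circle product formula (Definition \ref{coloredcircleproduct} and Lemma \ref{tensor-over-sigmac}) to show that each generating map $\sO \comp i$ is \emph{itself} a (closed) inclusion---using that (closed) inclusions are saturated under coproduct, smash/Cartesian product, and passage to coinvariants---and then passes tacitly to cell complexes. Your route is more explicit about the step from free maps to algebra pushouts (which is not formal, since $U$ does not preserve pushouts); the paper's route avoids naming the filtration and instead identifies the structure of $\sO \comp i$ directly, but is terse on that final step. Both ultimately rest on the same saturation properties of (closed) inclusions.
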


\begin{proof}
If $\M$ is combinatorial then all objects are small relative to the whole category, and the same remains true in $\algorkm$. 

For the category of ($G$-equivariant) topological spaces, Lemma 2.4.1 in \cite{hovey} implies all spaces are small relative to the class of topological inclusions.  Since $\Top$ is $\Top^{\{e\}}$ with $\{e\}$ the trivial group, it is enough to consider the case $\Top^G$ for a general compact Lie group $G$.   We use $\otimes$ to denote the monoidal product in $\Top^G$.  In the category of compactly generated spaces (resp. compactly generated weak Hausdorff spaces), inclusions (resp. closed inclusions) are saturated with respect to transfinite composition, pushout, smash product, Cartesian product, and passage to coinvariants under a groupoid action, as can readily be checked (see \cite{white-topological} for an exposition). 

It is therefore sufficient to argue that $\sO \circ \sI, \sO \circ \sJ,$ and $\sO \circ \sI_K$ are contained in the class of inclusions (resp. closed inclusions).  By the definition of the circle product \eqref{acircleb} and Lemma \ref{tensor-over-sigmac} (in the symmetric case), by the definition of the non-symmetric circle product \eqref{acircleb-nonsym}, and by the saturation properties of (closed) inclusions stated in the previous paragraph, it suffices to show that every map in $\sI$, $\sJ$, and $\sI_K$ is a (closed) inclusion.  This is certainly true for $\sI$ and $\sJ$.  Using Lemma \ref{lemma:top-internal-hom-test}, we take $\sI_K$ to be the set of maps of the form $K\otimes i_n$ where $i_n \in \sI$.  It follows that every map in $\sI_K$ is a (closed) inclusion.
\end{proof}

\begin{theorem} \label{thm:oalg-g-spaces-model}
Let $G$ be a compact Lie group, and let $\sO$ be a colored operad in $\Top^G$. 
\begin{enumerate}
\item Then $\sO$-algebras in $\Top^G$ inherit a transferred model structure from the usual (Quillen) model structure on $\Top^G$. 
\item Let $K$ be a set of cofibrant objects such that $(\Top^G,K)$ is right localizable, satisfies condition $\$$ in Definition \ref{def:rkm-ppa}, and all objects of the form $(G/H)_{+} \wedge D^{n}_{+}$ are $K$-colocal. Then  $\sO$-algebras in $R_K \Top^G$ inherit a transferred model structure from $R_K \Top^G$.
\item The same result holds for unpointed $G$-spaces, assuming in (2) that all objects of the form $(G/H) \times D^{n}$ are $K$-colocal.
\end{enumerate}
\end{theorem}

\begin{proof}
All three statements follow from Theorem \ref{cor:strongly-preserve-operad}, applied to $\M = \Top^G$ (and with trivial $K$ for the statement about $\sO$-algebras in $\Top^G$). The requisite smallness is Proposition \ref{prop:smallness-in-top}. The symmetric monoidal fibrant replacement functor is the identity. The existence of functorial path objects follows from Proposition \ref{interval-path-object}, since $G$-spaces have a cocommutative coalgebra interval (indeed, a commutative and cocommutative Hopf interval object as in \cite{bm03}) and has a cofibrant unit. The interval is $[0,1]$ with the trivial $G$-action (or $[0,1]_+$ in the pointed setting). The multiplication is the usual multiplication of integers (in the pointed setting, $a*p = p$ for all $a$, where $p$ is the distinguished basepoint). The unit map is $\eta: \ast \to [0,1]$ taking $\ast$ to 1 (in the pointed case, $\eta: S^0\to [0,1]_+$ preserves basepoints and takes the other point of $S^0$ to $1$). The comultiplication is the diagonal. The counit $\epsilon: [0,1]\to \ast$ is trivial (and basepoint preserving in the pointed case). All of these morphisms are easily seen to be equivariant, and the coherence conditions to give $[0,1]$ the structure of a cocommutative coalgebra interval are verified precisely as in the non-equivariant case \cite{bm03}.
\end{proof}

This Theorem will be used in Section \ref{sec:app-equivar-spaces} to obtain numerous examples of situations where right Bousfield localizations preserve $\sO$-algebra structure.

\begin{remark}
The situation of family model structures, for a family $\sF$ of subgroups of $G$, is more subtle. Unless $G$ itself is in the family $\sF$, it is not known that the unit is cofibrant.
\end{remark}

The category $\Sp^G$ of orthogonal $G$-spectra also has a (co)commutative Hopf interval object. It is simply $\Sigma^\infty [0,1]_+$. The multiplication and comultiplication are obtained using that $\Sigma^\infty$ is strong symmetric monoidal \cite[Lemma 1.8]{mandell-may-schwede-shipley}. The coherence conditions then follow from those for $[0,1]_+$ in $G$-spaces. However, no model structure on $G$-spectra can admit a cofibrant unit and symmetric monoidal fibrant replacement functor, by a well-known counterexample due to Gaunce Lewis (expounded in \cite[Section 11]{may-e-infty}). In order to have a model structure on $\sO$-algebras for the commutative monoid operad $\sO$, one must use a {\em positive} model structure, see \cite{white-commutative-monoids} for a discussion. Unfortunately, such model structures do not have a cofibrant unit (by design), so Proposition \ref{interval-path-object} (and also \cite[Proposition 4.1]{bm03}) cannot be used. However, \cite[Corollary 2.7]{kro} gets around the assumption that the unit is cofibrant, by using the interval object in another category $\cat V$ in which $\cat M$ is enriched. For us, $cat V$ is $\Top^G$. By mimicking Kro's approach \cite{kro}, with respect to the positive stable model structure on $G$-spectra \cite[Theorem III.5.3]{mandell-may-equivariant} and for the positive complete model structure on $G$-spectra \cite[Proposition B.63]{kervaire}, we obtain:

\begin{theorem} \label{thm:kro-g-spectra-fib-rep}
The positive stable model structure on $G$-equivariant orthogonal spectra admits a symmetric monoidal fibrant replacement functor, for any compact Lie group $G$. The same is true for the positive complete model structure.
\end{theorem}

\begin{proof}
This is the $G$-equivariant analogue of \cite[Theorem 3.3]{kro}. Thinking of a $G$-spectrum as a functor from an indexing category of $G$-representations \cite[Definition II.2.6]{mandell-may-equivariant}, one defines the symmetric monoidal fibrant replacement of a $G$-spectrum $X$ via the formula $TX(V) = \hocolim_n \limits \Omega^{­nV} X({}_{(n+1)}V )$, where $V$ is a finite dimensional real $G$-representation, and ${}_nV$ is $V \oplus \dots \oplus V$ ($n$ times). The homotopy colimit is taken in $G$-spaces, and the verification that $TX$ is a $G$-spectrum, and that the functor $T$ is symmetric monoidal, proceeds precisely as in \cite{kro}. The point is that the $G$ action never mixes with Kro's considerations. For the verification that $TX$ is fibrant for the positive stable model structure, observe that positive fibrant $G$-spectra are positive $\Omega-G$-spectra \cite[Theorem III.5.3]{mandell-may-equivariant}. Now Kro's proof \cite[Theorem 3.3]{kro} carries through to the equivariant setting, but now only $G$-representations $V$ with $\dim V^G \neq 0$ are considered. This is actually more than is needed for Kro's proof to work: Kro only needs $\dim V\neq 0$ \cite[Theorem 3.3]{kro}. Hence, the proof also works for the positive complete model structure, which considers, for all closed subgroups $H$ of $G$, the $H$-representations $V$ that contain a non-zero invariant vector. In particular, this restriction on $V$, and the identification of fibrant $G$-spectra as $\Omega$-spectra \cite[Diagram (B.68)]{kervaire}, allows Kro's proof to carry through.
\end{proof}

\begin{remark}
The category of orthogonal $G$-spectra also has a positive {\em flat} stable model structure \cite[Theorem 2.3.27]{stolz-thesis} and a complete {\em flat} stable model structure, obtained just as in \cite[Theorem 2.3.27]{stolz-thesis}, but replacing the consideration of $V$ with $\dim V^G \neq 0$ by the consideration of $H$-representations $V$ that contain a non-zero invariant vector. These model structures have fewer fibrations. We therefore do not know that fibrant objects are $\Omega$-spectra, and hence it is not clear if Kro's proof \cite{kro} works in these settings.
\end{remark}

\begin{corollary} \label{cor:oalg-g-spectra-model}
Let $G$ be a compact Lie group, and let $\sO$ be a colored operad in orthogonal $G$-spectra. 
\begin{enumerate}
\item Then $\sO$-algebras in $\Sp^G$ inherit a transferred model structure from the positive stable (or positive complete) model structure on $\Sp^G$. 
\item Let $K$ be a set of cofibrant objects such that $(\Sp^G,K)$ is right localizable, satisfies condition $\$$ in Definition \ref{def:rkm-ppa}, and all objects of the form $F_W((G/H)_{+} \wedge D^{n}_{+})$ are $K$-colocal, for every $G$-representation $W$ in the universe indexing $G$-spectra \cite[Definition III.2.2]{mandell-may-equivariant}. Then  $\sO$-algebras in $R_K \Sp^G$ inherit a transferred model structure from $R_K \Sp^G$.
\end{enumerate}
\end{corollary}

\begin{proof}
The first statement follows from \cite[Corollary 2.7]{kro}, combined with Theorem \ref{thm:kro-g-spectra-fib-rep}. The second statement follows from Theorem \ref{cor:strongly-preserve-operad}, except instead of appealing to Proposition \ref{path-object-rkm} (the only place where the cofibrant $\otimes$ unit is required), we appeal to the proof of \cite[Corollary 2.7]{kro}, which provides the required functorial path objects.
\end{proof}

\begin{remark}
In this section, we have discussed conditions on a model category $\calm$ to guarantee that operad-algebras in a right Bousfield localization $\rkm$ inherit a transferred model structure. These conditions are satisfied in many, but not all, of the examples of interest. In the next section, we will prove several results regarding preservation of algebraic structure under right Bousfield localization. It will turn out that we only need a transferred {\em semi}-model structure on operad-algebras in $\rkm$. We work in this more general setting to allow these results to hold for a wider class of examples, since transferred semi-model structures exist in great generality, as demonstrated in \cite{white-yau}.
\end{remark}

\section{Preservation of Algebras Under Right Bousfield Localization}
\label{sec:preservation}

In this section we will discuss preservation of monadic algebras under right Bousfield localization.

\begin{definition}
\label{def:preserves-algebra}
Assume that:
\begin{itemize}
\item
$\M$ is a  model category, and $T$ is a monad on $\M$.
\item
$K$ is a class of cofibrant objects in $\M$ such that the right Bousfield localization $\rkm$ exists.
\end{itemize}
Then $R_K$ is said to \textit{preserve $T$-algebras} if:
\begin{enumerate}
\item When $X$ is a $T$-algebra there is some $T$-algebra $\tilde{X}$ that is weakly equivalent in $\M$ to $R_K X$.
\item In addition, when $X$ is a fibrant $T$-algebra, there is a choice of $\tilde{X}$ in $\algtm$, with $U(\tilde{X})$ colocal in $\M$, there is a $T$-algebra homomorphism $c_X:\tilde{X}\to X$ lifting the colocalization map $q:R_K UX\to UX$ up to homotopy, and there is a weak equivalence $\beta_X:U(\tilde{X}) \to R_K UX$ such that $q\circ \beta_X \cong U c_X$ in Ho$(\M)$.
\end{enumerate}
If $T = \fO$ is the free $\sO$-algebra monad of a colored operad $\sO$, then we say that $R_K$ \textit{preserves $\sO$-algebras} if it preserves $\fO$-algebras.
\end{definition}

The following general preservation result is the right Bousfield localization analogue of Theorem 7.2.3 in \cite{white-yau}, which deals with left Bousfield localization.  Although we are mostly interested in colored operads, the following preservation result holds for general monads.  Furthermore, it only requires semi-model structures on $T$-algebras.

\begin{theorem} 
\label{right-preservation}
Suppose:
\begin{enumerate}
\item
$\M$ is a  model category, and $T$ is a monad on $\M$.
\item
$K$ is a class of cofibrant objects in $\M$ such that the right Bousfield localization $\rkm$ exists.
\item
$\alg(T;\M)$ inherits a projective semi-model structure from $\M$ with weak equivalences and fibrations created in $\M$. 
\item
$\alg(T;\rkm)$ inherits a projective semi-model structure from $\rkm$ with weak equivalences and fibrations created in $\rkm$. 
\item
The forgetful functor
\begin{equation}
\label{forget-algtrkm}
U:\alg(T;\rkm) \to \rkm
\end{equation}
preserves cofibrant objects.
\end{enumerate}
Then $R_K$ preserves $T$-algebras.
\end{theorem}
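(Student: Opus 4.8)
The plan is to dualize the proof of \cite{white-yau} (7.2.3): given a $T$-algebra $E$, I would realize a cofibrant replacement of $E$ in $\algtrkm$ as a $K$-colocal (cellular) replacement of the underlying object $U(E)$ in $\M$, and then appeal to uniqueness of cellular replacements. Throughout I use that the cofibrant objects of $\rkm$ are exactly the $K$-colocal objects of $\M$ (cf.\ Theorem \ref{rkm-exists}(2)) and that $\rkm$ and $\M$ have the same fibrations and the same trivial cofibrations.

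\emph{Construction of $\tilde E$.} By hypothesis (4), $\algtrkm$ carries a semi-model structure, and since its initial object $\emptyset$ is cofibrant the map $\emptyset \to E$ factors there as $\emptyset \to QE \xrightarrow{p} E$ with $QE$ cofibrant in $\algtrkm$ and $p$ a trivial fibration. Because weak equivalences (and fibrations) in $\algtrkm$ are created in $\rkm$, the underlying map $U(p)\colon U(QE)\to U(E)$ is a weak equivalence in $\rkm$, i.e.\ a $K$-colocal equivalence. By hypothesis (5) the forgetful functor sends the cofibrant object $QE$ to a cofibrant object of $\rkm$, so $U(QE)$ is $K$-colocal. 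Hence $U(p)$ is a $K$-colocal equivalence out of a $K$-colocal object, which is precisely the data of a $K$-colocal (cellular) replacement of $U(E)$. (Equivalently, and paralleling \cite{white-yau} more closely, one may first use the semi-model structure on $\algtm$ from hypothesis (3) to replace $E$ by a cofibrant $T$-algebra $E'\to E$ there and then cofibrant-replace $E'$ in $\algtrkm$; the resulting composite $\tilde E\to E$ is again the required map.)

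\emph{Comparison with $R_K E$ and the lift.} A cellular replacement of $U(E)$ is unique up to weak equivalence in $\M$, and $R_K E\to E$ is one such replacement. Given two $K$-colocal objects equipped with $K$-colocal equivalences to $U(E)$, both are isomorphic to $U(E)$ in $\Ho(\rkm)$; replacing each fibrantly inside $\rkm$ (the targets stay $K$-colocal, since a trivial cofibration of $\rkm$ out of a cofibrant object has cofibrant target) produces an $\rkm$-weak equivalence between $K$-colocal objects, which by the dual of \cite{hirschhorn} (3.2.13) --- a $K$-colocal equivalence between $K$-colocal objects is automatically a weak equivalence in $\M$ --- is a weak equivalence in $\M$. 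Thus $U(QE)\simeq_{\M} R_K E$, so $\tilde E := QE$ satisfies the first clause of Definition \ref{def:preserves-algebra}. For the second clause, $p\colon \tilde E\to E$ is already a $T$-algebra homomorphism, and under the equivalence $U(\tilde E)\simeq_{\M} R_K E$ its underlying map corresponds, up to homotopy over $E$, to the localization map $R_K E\to E$, again by uniqueness of cellular replacements; no fibrancy of $E$ is actually needed here.

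The step I expect to be the main obstacle is the passage from $\rkm$-weak equivalences to $\M$-weak equivalences: the cofibrant replacement $p$ is only an $\rkm$-weak equivalence, whereas Definition \ref{def:preserves-algebra} demands a weak equivalence in $\M$ to $R_K E$. Making this rigorous requires tracking that every object appearing in the comparison remains $K$-colocal, so that the Whitehead-type fact ``$K$-colocal equivalences between $K$-colocal objects are $\M$-weak equivalences'' can be applied. A secondary, purely bookkeeping issue is that $\algtrkm$ is only a \emph{semi}-model category, so one must confirm that the cofibrant-replacement factorization of $\emptyset \to E$ and the characterization of its trivial fibrations there are legitimate --- both hold because $\emptyset$ is cofibrant.
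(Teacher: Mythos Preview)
Your approach is correct and is in fact more economical than the paper's. The paper constructs $\tilde E = Q_{K,T}B_TQ_TE$, passing first through a cofibrant replacement $Q_T$ in $\alg(T;\M)$, then a fibrant replacement $B_T$ there, and only then the cofibrant replacement $Q_{K,T}$ in $\alg(T;\rkm)$; it then builds comparison maps $\alpha,\beta$ via two lifting squares to relate this to $Q_KBQ_TE$. These extra steps are the residue of mechanically dualizing the left Bousfield argument of \cite{white-yau}~(7.2.3): in the left case one ultimately needs a \emph{fibrant} replacement in the localized algebra category, which in a semi-model category is only available for cofibrant inputs, forcing a preliminary $Q_T$. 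In the right Bousfield situation the roles are reversed: one needs a \emph{cofibrant} replacement $Q_{K,T}$, and since $\emptyset$ is already cofibrant this is available for every $E$ with no preprocessing. So your single-step $\tilde E=Q_{K,T}E$ suffices, and your observation that fibrancy of $E$ is not actually used in clause~(2) is correct --- the map $p\colon Q_{K,T}E\to E$ is always a $T$-algebra map whose underlying arrow is a cellular approximation.

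One place where your write-up could be tightened is the comparison paragraph. Rather than passing to $\Ho(\rkm)$ and invoking fibrant replacements, it is cleaner to argue directly: since $U(p)\colon U(QE)\to U(E)$ is a trivial fibration in $\rkm$ and $R_KE$ is cofibrant in $\rkm$, the map $R_KE\to E=U(E)$ lifts to a map $R_KE\to U(QE)$; by $2$-out-of-$3$ this lift is a $K$-colocal equivalence between $K$-colocal objects, hence an $\M$-weak equivalence by \cite{hirschhorn}~(3.2.13(2)). This is exactly the mechanism the paper uses (via its map $\beta$), and it makes the ``uniqueness of cellular replacements up to $\M$-equivalence'' step completely explicit.
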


\begin{proof}
Let $Q_K$ denote cofibrant replacement in $\rkm$, let $Q_{K,T}$ denote cofibrant replacement in $\alg(T; \rkm)$, and let $Q_T$ and $B_T$ denote cofibrant replacement and fibrant replacement in $\alg(T; \M)$.  Since $\algtm$ is a \textit{semi}-model category, we will only apply $B_T$ to cofibrant objects in $\algtm$. We first focus on the first form of preservation and at the end turn our attention to the case where $E$ is a fibrant $T$-algebra.

Pick a $T$-algebra $E$. If $E$ is not cofibrant in $\algtm$, we first take its cofibrant replacement $Q_T E$ in $\algtm$.  Since $Q_T E \to E$ is a trivial fibration in $\algtm$, it is also a trivial fibration in $\M$.  Applying the functorial fibrant replacement $B$ in $\M$ then yields a weak equivalence $B Q_T E \to BE$ in $\M$.  So applying cofibrant replacement in $\rkm$ yields a weak equivalence
\[
\nicexy{Q_K B Q_T E \ar[r] & Q_K B E}
\]
in $\rkm$ between $K$-colocal objects.  Thus, \cite{hirschhorn} (3.2.13(2)) implies that it is actually a weak equivalence in $\M$.

Because $Q_K$ is the left derived functor of the identity adjunction between $\M$ and $\rkm$, and $B$ is the right derived functor of the identity, we know that $B_K E$ is weakly equivalent to $Q_K B E$ in $\M$.  Combined with the previous paragraph, we infer that $B_K E$ is weakly equivalent to $Q_K B Q_T E$ in $\M$.  Our model of $\tilde{E}$ will be $Q_{K,T} B_T Q_T E$.  We must therefore show
\[
Q_K B Q_T E \simeq Q_{K,T}B_T Q_T E
\]
in $\calm$.

The fibrant replacement $Q_T E \to B Q_T E$ is a trivial cofibration in $\M$. The fibrant replacement $Q_T E\to B_T Q_T E$--which exists because $Q_T E$ is cofibrant in $\algtm$--is a weak equivalence in $\alg(T; \M$), hence in $\M$. The map $B_T Q_T E\to \ast$ is a fibration in $\alg(T; \M)$, hence in $\M$. Consider the following lifting diagram in $\M$:
\begin{align} \label{diagram-fib-rep-lift}
\nicexy{
Q_T E \ar[r]^-{\simeq} \ar@{>->}[d]_-{\simeq} 
& B_T Q_T E \ar@{->>}[d] 
\\
B Q_T E \ar[r] \ar@{..>}[ur]^-{\alpha} & \ast}
\end{align}
The lifting axiom gives the dotted lift $\alpha : B Q_T E\to B_T Q_T E$, and it is necessarily a weak equivalence in $\M$ by the 2-out-of-3 property.

Since $B_T Q_T E$ is a $T$-algebra in $\M$ it must also be a $T$-algebra in $\rkm$. In the following diagram, the left vertical is a cofibration in $\rkm$, the right vertical is a trivial fibration in $\alg(T; \rkm)$, hence in $\rkm$, and the bottom horizontal map is a weak equivalence in $\rkm$. We may therefore construct a dotted lift $\beta$:
\begin{align*}
\nicexy{
\emptyset \ar@{>->}[d] \ar[r] 
& Q_{K,T} B_{T} Q_T E \ar@{->>}[d]^-{\simeq}
\\
Q_K B_{T} Q_T E \ar[r]^-{\simeq} \ar@{..>}[ur]^-{\beta}
& B_T Q_T E}
\end{align*}
By the 2-out-of-3 property, the lift $\beta$ is a weak equivalence in $\rkm$. We make use of this map as the horizontal map in the lower right corner of the diagram below.

The top horizontal map $\alpha : B Q_T E \to B_T Q_T E$ in the following diagram is the first map we constructed, which was proven to be a weak equivalence in $\M$. The square in the diagram below is then obtained by applying $Q_K$ to that map. In particular, the map
\[
Q_K B Q_T E \to Q_K B_T Q_T E
\]
is a weak equivalence in $\rkm$:
\begin{align*}
\nicexy{
B Q_T E \ar[r]^-{\alpha} & B_T Q_T E& 
\\ Q_K B Q_T E \ar[r]^-{Q_K\alpha} \ar[u] 
& Q_K B_T Q_T E \ar[u] \ar[r]^-{\beta} 
& Q_{K,T} B_T Q_T E}
\end{align*}
We have shown that both of the bottom horizontal maps are weak equivalences in $\rkm$. Thus, by the 2-out-of-3 property, their composite
\[
Q_K B Q_T E \to Q_{K,T} B_T Q_T E
\]
is a weak equivalence in $\rkm$. All the objects in the bottom row are cofibrant in $\rkm$.  Note that $Q_{K,T} B_T Q_T E$ is cofibrant in $\rkm$ by the assumption that the forgetful functor $\alg(T;\rkm) \to \rkm$ preserves cofibrant objects.  So the above $K$-colocal equivalences are actually weak equivalences in $\M$ by Theorem 3.2.13(2) in \cite{hirschhorn}.

As $E$ was a $T$-algebra and $B_T Q_T$ and $Q_{K,T}$ are endofunctors on categories of $T$-algebras, it is clear that $Q_{K,T} B_T Q_T E$ is a $T$-algebra. We have just shown that $B_K E$ is weakly equivalent in $\M$ to this $T$-algebra, so we are done.

We turn now to the case where $E$ is assumed to be a fibrant $T$-algebra (so $E$ is fibrant in $\M$ as well). We have seen that there is an $\M$-weak equivalence
\[
Q_K B Q_T E \to Q_{K,T} B_T Q_T E,
\]
and above we took $Q_{K,T} B_T Q_T E$ in $\M$ as our representative for $B_K E$ in $\Ho(\M)$. Because $E$ is a fibrant $T$-algebra, so is its cofibrant replacement $Q_T E$ in $\algtm$.  There are weak equivalences
\[
Q_T E \adjoint B_T Q_T E\]
in $\alg(T;\rkm)$ because all fibrant replacements of a given object are weakly equivalent, e.g., by diagram (\ref{diagram-fib-rep-lift}). So passage to $B_T Q_T E$ is unnecessary when $E$ is a fibrant $T$-algebra, and we take $\tilde{E}:=Q_{K,T} Q_T E$ as our representative for $B_K E$. The map 
$c_E$ is simply the composite of cofibrant replacement maps $Q_{K,T} Q_T E \to Q_T E \to E$. The map $\beta_E$ is defined by the following lifting diagram in $\rkm$, where the right vertical map is cofibrant replacement in $\rkm$:
\[
\nicexy@R+.5cm@C+.5cm{
\varnothing \ar@{>->}[d] \ar@{>->}[r] & R_KUE \ar@{->>}[d]^-{q}_-{\simeq}\\ 
UQ_{K,T} Q_T E \ar[r]^-{Uc_X}_-{\simeq} \ar@{.>}[ur]^-{\beta} & UE}
\]
Using that $U$ preserves cofibrant objects we conclude that $UQ_{K,T} Q_T E$ is cofibrant in $\rkm$. It easily follows that $\beta$ is a weak equivalence, since it is a $K$-colocal equivalence between $K$-colocal objects (by the 2-out-of-3 property).

The localization map $R_K E\to E$ in $\Ho(\M)$ lifts to the composite
\[
\nicexy{Q_{K,T} Q_T E \ar[r] & Q_T E \ar[r] & E}
\]
in $\M$. As both $Q_T$ and $Q_{K,T}$ are taken in $\alg(T)$, this composite map is a $T$-algebra homomorphism, as desired.
\end{proof}

\begin{remark}
Section 7 in \cite{carato} gives an alternative approach to preservation of algebraic structure under colocalization, by seeking to lift colocalization functors to the level of $T$-algebras. Their approach also uses a transferred model structure on $\algtrkm$, in \cite[Theorem 7.6]{carato}. In \cite{white-yau4}, we prove that our approach is equivalent to the approach of \cite{carato}. This dualizes an earlier result of Batanin and the first author \cite{batanin-white} that holds for left Bousfield localizations and proves that the main preservation result in \cite{white-localization} is equivalent to the preservation in \cite{carato}, in situations where both apply. 
\end{remark}

\section{Entrywise Colocal Colored Operads}
\label{sec:colocal-colored-operads}

The main observation in this section is Theorem \ref{operad-preservation-rkm}.  It provides sufficient conditions under which $R_K$ preserves $\sO$-algebras for an entrywise $K$-colocal colored operad.

Throughout this section, assume that $\M$ is a model category and that $K$ is a class of cofibrant objects in $\M$.  First recall the following condition $\clubsuit$ from \cite{white-yau} (Definition 6.2.1).

\begin{definition}
Suppose $\calm$ is a symmetric monoidal category and is a model category.  Define the following condition.
\begin{quote}
$\clubsuit$: For each $n \geq 1$ and $X \in \calm^{\sigmaop_n}$ that is cofibrant in $\calm$, the function
\[
X \tensorover{\Sigma_n} (-)^{\boxprod n} : \calm \to \calm
\]
preserves cofibrations and trivial cofibrations.
\end{quote}
The condition $\clubsuit$ for cofibrations will be referred to as $\clubcof$, and the condition for trivial cofibrations as $\clubacof$.  So
\[
\clubsuit = \clubcof + \clubacof.
\]
Write $\clubsuit^{\M}$, $\clubcof^{\M}$, and $\clubacof^{\M}$ if it is necessary to specify the category $\M$.
\end{definition}

To put a suitable model structure on the category of algebras over a colored operad in $\rkm$, we will employ Theorem 6.2.3 in \cite{white-yau} to $\rkm$, for which we need $\clubk$.  In other words, we need $\clubcofk$ and $\clubacofk$.  The following lemma deals with $\clubacofk$.

\begin{lemma}
\label{clubacof-rkm}
Suppose $\rkm$ exists.  Then $\clubacofm$ implies $\clubacofk$.
\end{lemma}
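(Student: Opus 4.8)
The plan is to lean on three elementary facts about right Bousfield localization: $\rkm$ and $\M$ have the \emph{same} underlying category and the \emph{same} symmetric monoidal structure; they have the \emph{same} trivial cofibrations; and every object cofibrant in $\rkm$ is cofibrant in $\M$. The first fact says that the endofunctor $X \tensorover{\Sigma_n}(-)^{\boxprod n}$ is literally one and the same functor whether its source and target are taken to be $\M$ or $\rkm$, since $\boxprod$ and $\tensorover{\Sigma_n}$ are built out of the monoidal product and colimits, all of which agree in $\M$ and $\rkm$. Hence the statement ``$X \tensorover{\Sigma_n}(-)^{\boxprod n}$ preserves trivial cofibrations'' can only fail to transfer between $\M$ and $\rkm$ through a discrepancy in which maps are trivial cofibrations, and there is none.

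First I would record that a cofibrant object of $\rkm$ is cofibrant in $\M$: by Theorem \ref{rkm-exists}(2) the cofibrant objects of $\rkm$ are the $K$-colocal objects, and a $K$-colocal object is cofibrant in $\M$ by definition. (Should one instead read ``cofibrant in $\calm$'' in the statement of $\clubsuit$ as ``projectively cofibrant in $\msigmanop$'', the same transfer still holds, since the generating cofibrations $\Lambdakbar$ of $\rkm$ lie among the cofibrations of $\M$, so every projective cofibration of $\rkmsigmanop$ is one of $\msigmanop$.) Note that only the existence of $\rkm$ is used here, not cofibrant generation.

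With this in place the argument is short. Fix $n \geq 1$ and $X \in \rkmsigmanop$ cofibrant in $\rkm$; then $X$ is cofibrant in $\M$. Let $f$ be a trivial cofibration in $\rkm$. Since $\M$ and $\rkm$ have the same trivial cofibrations, $f$ is a trivial cofibration in $\M$, so $\clubacofm$ gives that $X \tensorover{\Sigma_n} f^{\boxprod n}$ is a trivial cofibration in $\M$, hence once more a trivial cofibration in $\rkm$. As $f$ was arbitrary, $X \tensorover{\Sigma_n}(-)^{\boxprod n} : \rkm \to \rkm$ preserves trivial cofibrations, which is precisely $\clubacofk$.

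There is no substantive obstacle; the only point requiring care is to pin down which notion of cofibrancy occurs in $\clubsuit$ and to check that it passes from $\rkm$ to $\M$ — which it does precisely because right Bousfield localization only deletes cofibrations. By contrast, one should \emph{not} expect the analogous argument to yield $\clubcofm \Rightarrow \clubcofk$: $\rkm$ has strictly fewer cofibrations than $\M$, so a pushout product of $\rkm$-cofibrations that happens to be an $\M$-cofibration need not be an $\rkm$-cofibration.
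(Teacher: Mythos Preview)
Your proof is correct and follows essentially the same argument as the paper: $X$ cofibrant in $\rkm$ implies $X$ cofibrant in $\M$, and $\M$ and $\rkm$ share the same trivial cofibrations, so $\clubacofm$ transfers directly to $\clubacofk$. Your additional remarks (handling the possible projective reading of ``cofibrant in $\M$'', and explaining why the analogous transfer fails for $\clubcof$) are correct and helpful but not needed for the lemma itself.
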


\begin{proof}
Suppose $X \in \rkmsigmanop = \msigmanop$ is cofibrant in $\rkm$.  So $X$ is also cofibrant in $\M$.  The condition $\clubacofm$ says that the map
\[
\nicexy@C+.5cm{
\M \ar[r]^-{X \otimes_{\Sigma_n} (-)^{\boxprod n}}
& \M
}\]
preserves trivial cofibrations in $\M$.  But $\M$ and $\rkm$ have the same fibrations, hence the same trivial cofibrations.
\end{proof}

The following condition will be used to obtain $\clubcofk$.

\begin{definition}
\label{def:diamond-condition}
Suppose $\M$ is a symmetric monoidal category and is a model category, and $K$ is a set of cofibrant objects in $\M$.  Define the following condition.
\begin{quote}
$\diamond$ : For each $n \geq 1$, $X \in \calm^{\sigmaop_n}$ that is a $K$-colocal object in $\calm$, and map $\beta : U \to V \in \msigmanop$ that is both a fibration in $\M$ and a $K$-colocal equivalence, every solid-arrow diagram
\[
\nicexy{
& U \ar[d]^-{\beta}\\
X \ar@{.>}[ur] \ar[r] & V
}\]
in $\msigmanop$ admits a dotted lift.
\end{quote}
\end{definition}

\begin{lemma}
\label{clubcof-rkm}
Suppose $\rkm$ is a monoidal model category.   Then $\diamond$ implies $\clubcofk$.
\end{lemma}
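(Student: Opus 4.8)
The plan is to unwind the definition of a cofibration in $\rkm$ and transpose the resulting lifting problem across the $\Sigma_n$-coend/hom adjunction until condition $\diamond$ applies verbatim. Recall that $\M$ and $\rkm$ have the same fibrations, so a cofibration in $\rkm$ is precisely a map with the left lifting property against every $p\colon U\to V$ that is simultaneously a fibration in $\M$ and a $K$-colocal equivalence (a trivial fibration in $\rkm$). Thus, fixing $n\geq 1$, a cofibration $f\colon A\to B$ in $\rkm$, an $X\in\msigmanop$ whose underlying object is $K$-colocal, and such a $p$, I must produce a filler for every commutative square whose left edge is $X\tensorover{\Sigma_n}f^{\boxprod n}$ and whose right edge is $p$; here $f^{\boxprod n}\colon Q\to B^{\otimes n}$ denotes the $n$-fold pushout product of $f$, with its permutation $\Sigma_n$-action, $Q$ being its domain.

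The first step is to transpose. For a fixed $P'\in\M^{\Sigma_n}$ the functor $(-)\tensorover{\Sigma_n}P'\colon\msigmanop\to\M$ is left adjoint to $\Hom(P',-)$, the latter carrying the $\sigmanop$-action induced by the $\Sigma_n$-action on $P'$; this is immediate from the coend formula of Definition \ref{tensoroverg} and closedness of $\M$. Applying this to both edges of the square, once with $P'=Q$ and once with $P'=B^{\otimes n}$, and using that $f^{\boxprod n}$ is $\Sigma_n$-equivariant while $p$ carries the trivial action, identifies the lifting square with a lifting problem in $\msigmanop$ of the shape
\[
\nicexy{
& \Hom(B^{\otimes n},U) \ar[d]^-{\beta}\\
X \ar@{.>}[ur] \ar[r] & \Hom(Q,U)\timesover{\Hom(Q,V)}\Hom(B^{\otimes n},V),
}
\]
where $\beta$ is the corner (pullback-hom) map of $f^{\boxprod n}$ against $p$, taken in $\msigmanop$; a dotted lift of $X$ through $\beta$ corresponds exactly to a filler of the original square.

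The second step is to check that $\beta$, regarded as a map of $\M$ (forgetting the $\sigmanop$-action), is a fibration in $\M$ and a $K$-colocal equivalence — this is where the hypothesis that $\rkm$ is a monoidal model category enters. Since $f$ is a cofibration in $\rkm$, iterating the pushout-product axiom in $\rkm$ shows that the underlying map of $f^{\boxprod n}=f\boxprod\cdots\boxprod f$ is a cofibration in $\rkm$; since $p$ is a trivial fibration in $\rkm$, the adjoint (pullback-hom) form of the pushout-product axiom in $\rkm$ then shows that $\beta$ is a trivial fibration in $\rkm$, i.e.\ a fibration in $\M$ that is a $K$-colocal equivalence (see \cite{hovey}). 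Now $\diamond$, applied to the $K$-colocal object $X\in\msigmanop$ and to $\beta$, supplies the dotted lift. Tracing back through the adjunction gives a filler of the original square, so $X\tensorover{\Sigma_n}f^{\boxprod n}$ has the left lifting property against every trivial fibration of $\rkm$, hence is a cofibration in $\rkm$. As $f$, $X$, and $n$ were arbitrary, this is exactly $\clubcofk$.

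The step requiring the most care is the transposition: one must verify that $\Hom(Q,-)$ and $\Hom(B^{\otimes n},-)$ carry the correct $\sigmanop$-actions so that $\beta$ is a genuine morphism of $\msigmanop$, that the adjunction bijections are equivariant, and that the corner map $\beta$ is assembled so that lifts through it match fillers of the square. The only other input is the standard fact, used in the third paragraph, that the $n$-fold pushout product of a cofibration is again a cofibration in any monoidal model category, here applied to $\rkm$; I do not expect any difficulty there.
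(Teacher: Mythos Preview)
Your proof is correct and follows essentially the same route as the paper's own argument: transpose the lifting problem for $X\tensorover{\Sigma_n}f^{\boxprod n}$ against a trivial fibration $p$ in $\rkm$ into a lifting problem of $X$ against the pullback-corner map $(f^{\boxprod n},p)$ in $\msigmanop$, use the pushout product axiom in $\rkm$ to see that this corner map is a trivial fibration in $\rkm$ (hence a fibration in $\M$ and a $K$-colocal equivalence), and then invoke $\diamond$. Your additional remarks about checking equivariance of the adjunction and of $\beta$ are appropriate and, if anything, more careful than the paper's presentation.
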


\begin{proof}
Suppose $X \in \rkmsigmanop$ is cofibrant in $\rkm$ (i.e., a $K$-colocal object in $\M$), $f : A \to B$ is a cofibration in $\rkm$, and $p : Y \to Z$ is a trivial fibration in $\rkm$.  The condition $\clubcofk$ says that every commutative square
\[
\nicexy{
X \tensorover{\sigman} A^{\boxn} \ar[d]_-{X \tensoroversigman f^{\boxn}} \ar[r]
& Y \ar[d]^-{p}
\\
X \tensorover{\sigman} B^{\boxn} \ar[r] \ar@{.>}[ur]^-{\alpha}
& Z
}\]
in $\rkm$ admits a dotted arrow that makes the entire diagram commutative.  A dotted filler $\alpha$ exists if and only if a dotted filler $\alpha'$ exists in the diagram
\[
\nicexy@R+.5cm{
& \Hom(B^{\boxn}, Y) \ar[d]^-{(f^{\boxn}, p)}
\\
X \ar[r] \ar@{.>}[ur]^-{\alpha'}
& \Hom(A^{\boxn}, Y) \timesover{\Hom(A^{\boxn},Z)} \Hom(B^{\boxn}, Z)
}\]
in $\rkmsigmanop$.  Assuming the condition $\diamond$, to show that the dotted filler $\alpha'$ exists, we just need to see that the pullback corner map $(f^{\boxn}, p)$ satisfies the condition for $\beta$ in $\diamond$.  In other words, we need to show that the map $(f^{\boxn}, p)$ is both a fibration in $\M$ and a $K$-colocal equivalence.

Since $f$ is a cofibration in $\rkm$, the iterated pushout product $f^{\boxn}$ is also a cofibration in $\rkm$ by the pushout product axiom in $\rkm$.  The map $p$ is a trivial fibration in $\rkm$ by assumption.  So the pushout product axiom in $\rkm$ once again implies that the map $(f^{\boxn}, p)$ is a trivial fibration in $\rkm$.  Since the fibrations in $\M$ and $\rkm$ are the same, the map $(f^{\boxn}, p)$ is a fibration in $\M$.  Finally, the map $(f^{\boxn}, p)$ is a weak equivalence in $\rkm$, i.e., a $K$-colocal equivalence. 
\end{proof}

\begin{theorem}
\label{rkm-algebra-model}
Suppose:
\begin{itemize}
\item
$\rkm$ is a cofibrantly generated monoidal model category.
\item
$\clubacofm$ and $\diamond$ are satisfied.
\item
$\sO$ is a $\fC$-colored operad in $\M$ that is entrywise a $K$-colocal object in $\M$.
\end{itemize}
Then:
\begin{enumerate}
\item
The category $\alg(\sO; \rkm)$ admits a cofibrantly generated \emph{semi}-model structure over $\rkm$ such that the weak equivalences and fibrations are created in $\rkm$.
\item
If $j : A \to B \in \alg(\sO; \rkm)$ is a cofibration with $A$ cofibrant in $\alg(\sO; \rkm)$, then the underlying map of $j$ is entrywise a cofibration in $\rkm$.
\item
Every $\sO$-algebra that is cofibrant in $\alg(\sO; \rkm)$ is entrywise cofibrant in $\rkm$.
\end{enumerate}
\end{theorem}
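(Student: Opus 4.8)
The plan is to deduce all three statements from a single application of the semi-model structure transfer theorem for colored operads, \cite{white-yau} (Theorem 6.2.3), applied to $\rkm$ in place of $\M$. That theorem takes as hypotheses a cofibrantly generated monoidal model category satisfying $\clubsuit$, together with a $\fC$-colored operad that is entrywise cofibrant, and produces exactly the conclusions (1), (2), and (3) asserted here (the semi-model structure, the entrywise-cofibration statement, and the entrywise-cofibrancy statement). So the real work is entirely in checking that these hypotheses hold for $\rkm$; the present theorem has been stated so as to cleanly separate the monoidality question (already settled in Section \ref{sec:monoidal}) from the algebras question.

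First I would dispose of the two easy inputs. The category $\rkm$ is a cofibrantly generated monoidal model category by hypothesis. And $\sO$ is entrywise a $K$-colocal object in $\M$; since the $K$-colocal objects are precisely the cofibrant objects of $\rkm$ by Theorem \ref{rkm-exists}(2), the operad $\sO$ is entrywise cofibrant in $\rkm$.

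The remaining input is $\clubk = \clubcofk + \clubacofk$, and this is where the two preparatory lemmas do their job. Lemma \ref{clubacof-rkm} gives $\clubacofk$ directly from the assumed $\clubacofm$, using only that $\M$ and $\rkm$ share the same trivial cofibrations. Lemma \ref{clubcof-rkm} gives $\clubcofk$ from the assumed condition $\diamond$, using that $\rkm$ is a monoidal model category. Combining, $\clubk$ holds in $\rkm$.

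With the hypotheses in place, \cite{white-yau} (6.2.3) applied to $\rkm$ (together with the existence of the relevant small limits and colimits of $\sO$-algebras from Proposition \ref{algebras-colimits} and the small object argument available in the cofibrantly generated $\rkm$) yields the cofibrantly generated semi-model structure on $\alg(\sO;\rkm)$ with weak equivalences and fibrations created in $\rkm$, the statement that a cofibration with cofibrant domain is entrywise a cofibration in $\rkm$, and the statement that cofibrant $\sO$-algebras are entrywise cofibrant in $\rkm$. The only genuine obstacle in the whole argument is the verification of $\clubcofk$ --- recognizing that the relevant pushout-product corner map is a fibration in $\M$ that is simultaneously a $K$-colocal equivalence, so that $\diamond$ applies --- and that has already been carried out in Lemma \ref{clubcof-rkm}; the rest is a formal invocation of the transfer theorem.
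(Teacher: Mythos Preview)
Your proposal is correct and follows essentially the same approach as the paper: verify that $\sO$ is entrywise cofibrant in $\rkm$, use Lemmas \ref{clubacof-rkm} and \ref{clubcof-rkm} to obtain $\clubk$ from $\clubacofm$ and $\diamond$, and then invoke \cite{white-yau} (6.2.3). The paper's proof is more terse but the logical structure is identical.
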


\begin{proof}
The operad $\sO$ is entrywise cofibrant in $\rkm$ by assumption.  By Lemmas \ref{clubacof-rkm} and \ref{clubcof-rkm} the conditions $\clubacofm$ and $\diamond$ together imply $\clubk = \clubcofk + \clubacofk$.  So Theorem 6.2.3 in \cite{white-yau} can be applied to $\rkm$, and the conclusions are the three statements above.
\end{proof}

\begin{theorem}
\label{operad-preservation-rkm}
Suppose:
\begin{itemize}
\item
$\M$ and $\rkm$ are cofibrantly generated monoidal model categories.
\item
$\clubm$ and $\diamond$ are satisfied.
\item
$\sO$ is a $\fC$-colored operad in $\M$ that is entrywise a $K$-colocal object in $\M$.
\end{itemize}
Then $R_K$ preserves $\sO$-algebras.
\end{theorem}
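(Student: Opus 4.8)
The plan is to deduce this from the general preservation result, Theorem \ref{right-preservation}, applied to the monad $T = \fO$ given by the free $\sO$-algebra functor on $\M^{\fC}$. So I need to verify the five hypotheses of that theorem. Hypotheses (1) and (2) are immediate from the assumptions: $\M$ is a model category, $\fO$ is a monad on $\M^\fC$, and $K$ is a class of cofibrant objects for which $\rkm$ is a (cofibrantly generated monoidal) model category.

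For hypothesis (3) I would invoke Theorem~6.2.3 of \cite{white-yau} applied to $\M$ itself. Since a $K$-colocal object is by definition cofibrant in $\M$, the operad $\sO$ is entrywise cofibrant in $\M$; together with $\clubm = \clubcofm + \clubacofm$ (and the standing assumption that $\M$ is a cofibrantly generated monoidal model category) this yields a cofibrantly generated projective semi-model structure on $\alg(\sO;\M)$ whose weak equivalences and fibrations are created in $\M$, which is exactly hypothesis (3).

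For hypotheses (4) and (5) I would quote Theorem \ref{rkm-algebra-model}, whose hypotheses hold in the present situation: $\rkm$ is a cofibrantly generated monoidal model category, $\clubacofm$ holds (being half of $\clubm$), $\diamond$ holds, and $\sO$ is entrywise a $K$-colocal object in $\M$. Part~(1) of that theorem gives the cofibrantly generated semi-model structure on $\alg(\sO;\rkm)$ with weak equivalences and fibrations created in $\rkm$, which is hypothesis (4). Part~(3) of that theorem says every $\sO$-algebra cofibrant in $\alg(\sO;\rkm)$ is entrywise cofibrant in $\rkm$, i.e.\ the forgetful functor $\alg(\sO;\rkm)\to\rkm$ preserves cofibrant objects, which is hypothesis (5). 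Applying Theorem \ref{right-preservation} then gives that $R_K$ preserves $\fO$-algebras, i.e.\ preserves $\sO$-algebras.

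The proof itself is an assembly of cited results, so the only delicate points are bookkeeping: checking that ``$K$-colocal'' supplies ``cofibrant in $\M$'' so that the $\M$-version of \cite{white-yau}~(6.2.3) applies, and tracking which half of $\clubm$ feeds which model structure. The genuine technical content is external to this argument and lives in Lemmas \ref{clubacof-rkm} and \ref{clubcof-rkm}, which transport the condition $\clubsuit$ from $\M$ (via $\clubacofm$) and from $\diamond$ to $\rkm$, so that \cite{white-yau}~(6.2.3) can legitimately be applied to $\rkm$ inside the proof of Theorem \ref{rkm-algebra-model}; that transport is where I would expect any real obstacle to sit.
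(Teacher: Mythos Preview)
Your proposal is correct and follows essentially the same approach as the paper: both apply Theorem~\ref{right-preservation} with $T=\fO$, obtain the semi-model structure on $\alg(\sO;\M)$ from \cite{white-yau}~(6.2.3) using $\clubm$ and entrywise cofibrancy of $\sO$ in $\M$, and obtain the semi-model structure on $\alg(\sO;\rkm)$ together with the cofibrant-object preservation from Theorem~\ref{rkm-algebra-model}. The only cosmetic difference is that the paper phrases ``entrywise cofibrant in $\M$'' via ``cofibrant in $\rkm$ $\Rightarrow$ cofibrant in $\M$'', whereas you read it straight off the definition of $K$-colocal; both are fine.
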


\begin{proof}
We will use Theorem \ref{right-preservation} with $T=\fO$ the monad on $\M^{\fC}$ associated to the colored operad $\sO$ \cite{white-yau} (Definition 4.1.1).  So we now check the hypotheses in Theorem \ref{right-preservation}.
\begin{enumerate}
\item
$\rkm$ exists by assumption.
\item
Note that $\sO$ is also entrywise cofibrant in $\M$, since every cofibration in $\rkm$ is also a cofibration in $\M$ and $\sO$ is entrywise cofibrant in $\rkm$ by assumption.  So $\alg(\sO;\M)$ inherits a semi-model structure from $\M^{\fC}$ by Theorem 6.2.3 in \cite{white-yau} and the assumption that $\clubm$ is satisfied.  Likewise, $\alg(\sO;\rkm)$ inherits a semi-model structure from $\rkmc$ by Theorem \ref{rkm-algebra-model}(1).
\item
The forgetful functor
\[
\alg(\sO;\rkm) \to \rkmc
\]
preserves cofibrant objects by Theorem \ref{rkm-algebra-model}(3).
\end{enumerate}
\end{proof}

\section{Sigma Cofibrant Colored Operads}
\label{sec:cofibrant-operads}

In this section we assume a little bit more about the colored operad $\sO$ and less about $\M$ than in Theorem \ref{operad-preservation-rkm}.  The main observation is Theorem \ref{preservation-sigmacof}, which says that $R_K$ preserves $\sO$-algebras if $\sO$ satisfies a certain condition $\staro$, which is equivalent to $\sO$ being cofibrant as a colored symmetric sequence in $\rkm$.

Throughout this section, assume that $\M$ is a cofibrantly generated model category and that $K$ is a class of cofibrant objects in $\M$.   The following lemma is used below to provide necessary conditions for a colored symmetric sequence in $\rkm$ to be cofibrant.

\begin{lemma}
\label{rkm-g}
Suppose:
\begin{itemize}
\item
$\rkm$ is a cofibrantly generated model category, and $G$ is a finite connected groupoid.
\item
$f \in (\rkm)^G$ is a cofibration, where $(\rkm)^G$ has the inherited projective model structure from $\rkm$.  
\end{itemize}
Then:
\begin{enumerate}
\item
$f$ is entrywise a cofibration in $\rkm$.
\item
$f \in \M^G$ is a cofibration.
\end{enumerate}
\end{lemma}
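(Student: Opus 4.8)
The plan is to work directly with the generating cofibrations of the projective model structure and then push everything through evaluation and through the identity functor. Since $\M$ (hence $\rkm$) is cofibrantly generated and $G$ is small, both $(\rkm)^G$ and $\M^G$ carry projective model structures, cofibrantly generated by \cite{hirschhorn} (11.6.1); for $(\rkm)^G$ the generating cofibrations are the maps $F_c(i)$, where $i$ ranges over a set $I$ of generating cofibrations of $\rkm$, $c$ ranges over the objects of $G$, and $F_c\colon\rkm\to(\rkm)^G$ is the left adjoint to evaluation at $c$ (left Kan extension along the inclusion of $\{c\}$ into $G$). So the cofibration $f$ is a retract of a relative $\{F_c(i)\}$-cell complex, and I would use this presentation for both parts.

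For part (1), I would fix an object $d$ of $G$ and apply $\Ev_d\colon(\rkm)^G\to\rkm$. This functor preserves all colimits (they are computed objectwise) and retracts, so $\Ev_d(f)$ is a retract of a relative cell complex whose cells are the maps $\Ev_d F_c(i)$. The left Kan extension formula gives $\Ev_d F_c(i)\cong\coprod_{G(c,d)} i$, a coproduct of copies of $i$ indexed by the hom-set $G(c,d)$, which is non-empty because $G$ is connected; hence this is a coproduct of cofibrations of $\rkm$, so a cofibration of $\rkm$. Since cofibrations of $\rkm$ are closed under pushout, transfinite composition, and retract, $\Ev_d(f)$ is a cofibration of $\rkm$.

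For part (2), the observation I would lean on is that $\mathrm{id}\colon\rkm\to\M$ is left Quillen: it preserves cofibrations because $\rkm$ has fewer cofibrations than $\M$, and it preserves trivial cofibrations because $\M$ and $\rkm$ have the same trivial cofibrations. Applying it objectwise yields $\mathrm{id}\colon(\rkm)^G\to\M^G$, which I claim is again left Quillen for the projective structures: its right adjoint is the identity $\M^G\to(\rkm)^G$, and that functor preserves projective fibrations and projective trivial fibrations, since both are detected objectwise, $\M$ and $\rkm$ have the same fibrations, and every trivial fibration of $\M$ is a trivial fibration of $\rkm$ (an $\M$-weak equivalence is in particular a $K$-colocal equivalence). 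Being left Quillen, $\mathrm{id}\colon(\rkm)^G\to\M^G$ preserves cofibrations, which is exactly (2). As an alternative I could instead repeat the cell-by-cell argument of (1): forgetting $F_c(i)$ to $\M^G$ gives the left Kan extension of the $\M$-cofibration $i$, which is a projective cofibration of $\M^G$, and these are closed under the relevant colimits and retracts.

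There is no real obstacle here — the argument is bookkeeping with cofibrantly generated model structures and left Kan extensions. The one genuinely non-formal input, used in part (2), is the comparison between (trivial) cofibrations and fibrations in $\M$ versus $\rkm$ recalled earlier in the paper; the only point requiring a little care is the Kan-extension computation $\Ev_d F_c(i)\cong\coprod_{G(c,d)} i$, where connectedness of $G$ is what guarantees the indexing set is non-empty so that the cell is an honest coproduct of copies of $i$.
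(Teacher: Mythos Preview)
Your proposal is correct and essentially matches the paper's proof. For (1) the paper simply cites \cite{hirschhorn} (11.6.3), which is exactly the Kan-extension/evaluation argument you spell out; for (2) the paper runs your ``alternative'' cell-by-cell argument directly (observing $G\cdot \sI_K \subseteq G\cdot \M_{\cof}$ and that $G\cdot -$ commutes with the relevant closure operations), while your primary Quillen-adjunction argument is a slightly cleaner repackaging of the same comparison of (trivial) fibrations in $\M$ and $\rkm$. One small nit: connectedness of $G$ is not actually needed to conclude that $\coprod_{G(c,d)} i$ is a cofibration, since an empty coproduct gives the identity on the initial object; the finite-connected-groupoid hypothesis is there because of how the lemma is later applied to the groupoids $\Sigma_{[\uc]}^{\smallop}$.
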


\begin{proof}
The first statement follows from \cite{hirschhorn} (11.6.3).

For the second statement, the cofibrations in $(\rkm)^G$ are generated--as retracts of transfinite compositions of pushouts--by the set $G \cdot \sI_K$.  Here $\sI_K$ is the set of generating cofibrations in $\rkm$, while
\[
G \cdot g = \coprod_{G(-,\beta)} g
\]
for some object $\beta \in G$ with the coproduct taken in $\rkm$.  Each cofibration in $\rkm$ is also a cofibration in $\M$, so
\[
G \cdot \sI_K \subseteq G \cdot \M_{\cof}.
\]
The cofibrations in $\M$ are generated by the set $\sI$ of generating cofibrations.  Since $G \cdot -$ commutes with taking retracts, transfinite compositions, and pushouts, each cofibration in $(\rkm)^G$ is generated by $G \cdot \sI$, which is the set of generating cofibrations in $\M^G$.
\end{proof}

The following observation gives necessary conditions for a $\fC$-colored symmetric sequence in $\rkm$ to be cofibrant.

\begin{proposition}
\label{rkm-sigmacof-necessary}
Suppose $\rkm$ exists, and $X \in \symseqcrkm$ is cofibrant.  Then:
\begin{enumerate}
\item
$X$ is entrywise a $K$-colocal object in $\M$.
\item
$X \in \symseqcm$ is cofibrant.
\end{enumerate}
\end{proposition}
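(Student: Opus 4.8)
The plan is to reduce the statement to Lemma \ref{rkm-g} via the product decomposition of colored symmetric sequences. Recall from \eqref{csonedecomp} the isomorphism
\[
\symseqcrkm \;\cong\; \prod_{d \in \fC}\ \prod_{[\uc] \in \pofc} (\rkm)^{\Sigma_{[\uc]}^{\mathrm{op}}},
\]
and that, under the standing hypothesis that $\rkm$ is cofibrantly generated (so that this model structure is defined, by the Remark following \eqref{csonedecomp}), the model structure on $\symseqcrkm$ is the product of the projective model structures on the factors $(\rkm)^{\Sigma_{[\uc]}^{\mathrm{op}}}$. Hence $X$ being cofibrant in $\symseqcrkm$ is equivalent to each component $X\smallbinom{d}{[\uc]}$ being cofibrant in $(\rkm)^{\Sigma_{[\uc]}^{\mathrm{op}}}$, i.e. to the map $\varnothing \to X\smallbinom{d}{[\uc]}$ being a cofibration there.

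Next I would observe that each $\Sigma_{[\uc]}^{\mathrm{op}}$ is a finite, non-empty, connected groupoid: by definition $\Sigma_{[\uc]}$ is the maximal connected sub-groupoid of $\pofc$ containing $\uc$, and its objects are the finitely many left permutations of $\uc$. So Lemma \ref{rkm-g} applies with $G = \Sigma_{[\uc]}^{\mathrm{op}}$ and $f = \bigl(\varnothing \to X\smallbinom{d}{[\uc]}\bigr)$. Part (1) of that lemma says $f$ is entrywise a cofibration in $\rkm$; equivalently, every entry of $X\smallbinom{d}{[\uc]}$ is cofibrant in $\rkm$, hence a $K$-colocal object of $\M$ by Theorem \ref{rkm-exists}(2). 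Running over all $d \in \fC$ and all orbits $[\uc]$, and using the coproduct decomposition $\pofcop \times \fC \cong \coprod_{d}\coprod_{[\uc]} \Sigma_{[\uc]}^{\mathrm{op}} \times \{d\}$ to identify the entries of $X$ as an object of $\C^{\pofcop \times \fC}$ with the entries of the various components $X\smallbinom{d}{[\uc]}$, this yields statement (1). For statement (2), part (2) of Lemma \ref{rkm-g} gives that $f = \bigl(\varnothing \to X\smallbinom{d}{[\uc]}\bigr)$ is a cofibration in $\M^{\Sigma_{[\uc]}^{\mathrm{op}}}$, i.e. $X\smallbinom{d}{[\uc]}$ is cofibrant there; since the decomposition \eqref{csonedecomp} holds verbatim with $\M$ in place of $\rkm$, and each component of $X$ is cofibrant, $X$ is cofibrant in $\symseqcm$.

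There is no serious obstacle here once Lemma \ref{rkm-g} is in hand; the only points requiring care are bookkeeping. One must make sure that the projective model structure on each $(\rkm)^{\Sigma_{[\uc]}^{\mathrm{op}}}$ occurring in \eqref{csonedecomp} is the same one appearing in Lemma \ref{rkm-g}, and that $\symseqcrkm$ genuinely carries the product model structure so that ``cofibrant'' decomposes componentwise (this is where cofibrant generation of $\rkm$ is used). After that, the identification of the two notions of ``entrywise'' — entries of $X$ in $\C^{\pofcop \times \fC}$ versus entries of each $X\smallbinom{d}{[\uc]}$ in $\C^{\Sigma_{[\uc]}^{\mathrm{op}}}$ — is immediate from the coproduct decomposition of $\pofcop \times \fC$, and no nontrivial homotopy-theoretic input beyond Lemma \ref{rkm-g} and Theorem \ref{rkm-exists}(2) is needed.
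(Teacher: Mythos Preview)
Your proof is correct and follows essentially the same route as the paper: decompose $\symseqcrkm$ via \eqref{csonedecomp} into a product of projective model categories over the finite connected groupoids $\Sigma_{[\uc]}^{\smallop}$, then apply Lemma \ref{rkm-g} componentwise to obtain both conclusions. Your version is slightly more explicit about the bookkeeping (the identification of entries, the cofibrant-generation hypothesis needed for the product model structure, and the citation of Theorem \ref{rkm-exists}(2) for $K$-colocality), but the argument is the same.
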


\begin{proof}
Since (by \cite{white-yau} (3.1.5))
\[
\symseqcrkm \cong \prod_{\singledbrc \in \sigmacopc} (\rkm)^{\sigmabrcopd},
\]
$X$ being cofibrant means that each component
\[
X\singledbrc \in (\rkm)^{\sigmabrcopd}
\]
is cofibrant.  Since each $\sigmabrcopd$ is a finite connected groupoid, Lemma \ref{rkm-g} says that each component $X\singledbrc$ is
\begin{itemize}
\item
entrywise cofibrant in $\rkm$ (i.e., a $K$-colocal object);
\item
cofibrant in $\M^{\sigmabrcopd}$.
\end{itemize}
Since we also have
\[
\symseqcm \cong \prod_{\singledbrc \in \sigmacopc} \M^{\sigmabrcopd},
\]
we conclude that $X \in \symseqcm$ is also cofibrant.
\end{proof}

\begin{definition}
\label{def:condition-star}
Suppose $X \in \symseqcm$.  Define the following condition.
\begin{quote}
$\starx$ : For each map $p : E \to B \in \symseqcm$ that is entrywise both a fibration in $\M$ and a $K$-colocal equivalence, each solid-arrow diagram
\begin{equation}
\label{dc-lift}
\nicexy{
& E\singledbrc \ar[d]^-{p}
\\
X\singledbrc \ar@{.>}[ur] \ar[r] & B\singledbrc 
}
\end{equation}
in $\M^{\sigmabrcopd}$ admits a dotted lift for each $\singledbrc \in \sigmacopc$.
\end{quote}
\end{definition}

The following observation gives a necessary and sufficient condition for a $\fC$-colored symmetric sequence in $\rkm$ to be cofibrant.

\begin{proposition}
\label{sigmacof-rkm}
Suppose $\rkm$ exists, and $X \in \symseqcrkm$.  Then $X$ is cofibrant if and only if $\starx$ holds.
\end{proposition}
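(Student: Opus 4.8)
The plan is to reduce, just as in the proof of Proposition~\ref{rkm-sigmacof-necessary}, to the product decomposition
\[
\symseqcrkm \;\cong\; \prod_{\singledbrc \in \sigmacopc} (\rkm)^{\sigmabrcopd}
\]
of \cite{white-yau} (3.1.5), in which each factor carries the projective model structure and $\symseqcrkm$ carries the product model structure. Under this identification $X$ is cofibrant exactly when each component $X\singledbrc$ is cofibrant in $(\rkm)^{\sigmabrcopd}$, i.e.\ exactly when the map $\emptyset \to X$ has the left lifting property against every trivial fibration of $\symseqcrkm$.

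The second step is to recognize the trivial fibrations of $\symseqcrkm$. Since fibrations and weak equivalences in $\symseqcrkm$ are both defined entrywise in $\rkm$, a map $p : E \to B$ there is a trivial fibration if and only if each component $p\singledbrc$ is an entrywise trivial fibration in $(\rkm)^{\sigmabrcopd}$; and because $\M$ and $\rkm$ share the same fibrations while the weak equivalences of $\rkm$ are precisely the $K$-colocal equivalences, this says exactly that $p$, viewed as a map in $\symseqcm$, is entrywise both a fibration in $\M$ and a $K$-colocal equivalence. That is precisely the class of maps quantified over in $\starx$. The third step is to unravel the lifting property componentwise: a lift of $\emptyset \to X$ against such a $p$ is the same data as a choice of dotted lift in \eqref{dc-lift} for every $\singledbrc \in \sigmacopc$ --- and whether one draws that diagram in $\M^{\sigmabrcopd}$ or in $(\rkm)^{\sigmabrcopd}$ is immaterial, since a lift is just a morphism of diagrams. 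Combining the three steps yields that $X$ is cofibrant in $\symseqcrkm$ if and only if $\starx$ holds.

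I do not expect a genuine obstacle here, since all of the model-categorical input has already been assembled for Proposition~\ref{rkm-sigmacof-necessary} and Lemma~\ref{rkm-g}; the only point needing a moment's care is matching the quantifiers. Because $\starx$ describes its test maps factor by factor, for the ``if'' direction one should observe that a trivial fibration living in a single factor $(\rkm)^{\sigmabrcopd}$ extends --- for instance by identity maps on all the remaining factors --- to a trivial fibration of $\symseqcrkm$, so that testing against the maps of $\starx$ really does detect cofibrancy of each $X\singledbrc$ separately. As with Proposition~\ref{rkm-sigmacof-necessary} and Lemma~\ref{rkm-g}, the argument tacitly uses that $\rkm$ may be taken cofibrantly generated, so that the projective model structures on the categories $(\rkm)^{\sigmabrcopd}$ are available.
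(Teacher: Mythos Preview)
Your proposal is correct and follows essentially the same route as the paper: reduce via the product decomposition, identify trivial fibrations in $\symseqcrkm$ as maps that are entrywise fibrations in $\M$ and $K$-colocal equivalences, and then observe that a global lift against such a $p$ is precisely a family of componentwise lifts, which is $\starx$.

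One small remark: your caution about extending a single-factor trivial fibration by identities is unnecessary. The condition $\starx$ already quantifies over maps $p$ in the \emph{full} category $\symseqcm$ (not over maps in a single factor), and then asks for the lift at each $\singledbrc$; so the quantifiers in $\starx$ and in ``$\emptyset \to X$ lifts against every trivial fibration of $\symseqcrkm$'' line up on the nose without any extension step. Your observation that one tacitly needs $\rkm$ cofibrantly generated (so that the projective model structures on the factors, and hence on $\symseqcrkm$, exist) is well taken and is indeed implicit in the paper's standing hypotheses for the section.
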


\begin{proof}
Suppose
\[
p : E \to B \in \symseqcrkm  \cong \prod_{\singledbrc \in \sigmacopc} (\rkm)^{\sigmabrcopd}
\]
is a trivial fibration, i.e., each component in $(\rkm)^{\sigmabrcopd}$ is a trivial fibration.  This means precisely that $p$ is entrywise a trivial fibration in $\rkm$ by \cite{hirschhorn} (11.6.1).  In other words, $p$ is entrywise both a fibration in $\M$ (since $\M$ and $\rkm$ have the same fibrations) and a $K$-colocal equivalence ($=$ weak equivalence in $\rkm$).  The object $X$ is cofibrant if and only if for each such $p$ and each diagram
\[
\nicexy{
& E \ar[d]^p\\
X \ar[r] \ar@{.>}[ur] & B
}\]
in $\symseqcrkm$, a dotted filler exists.  Such a dotted filler exists if and only if the diagram \eqref{dc-lift} in $(\rkm)^{\sigmabrcopd} = \M^{\sigmabrcopd}$ always has a dotted filler for each $\singledbrc \in \sigmacopc$.  This is exactly $\starx$.
\end{proof}

\begin{theorem}
\label{preservation-sigmacof}
Suppose:
\begin{itemize}
\item
$\M$ and $\rkm$ are both cofibrantly generated monoidal model categories.
\item
$\sO$ is a $\fC$-colored operad in $\M$ that is cofibrant in $\symseqcrkm$.
\end{itemize}
Then $R_K$ preserves $\sO$-algebras.
\end{theorem}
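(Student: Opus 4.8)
The plan is to follow the template of the proof of Theorem \ref{operad-preservation-rkm}: deduce the statement from the general preservation criterion of Theorem \ref{right-preservation}, applied to the free $\sO$-algebra monad $T = \fO$ on $\M^{\fC}$, so that $\alg(T;\M) = \algom$ and $\alg(T;\rkm) = \algorkm$ and $R_K$ acts on $\M^{\fC}$ entrywise. The point of the present hypotheses is that a \emph{$\Sigma$-cofibrant} colored operad supports good homotopy theory on its algebra categories without any $\clubsuit$-type condition on the ambient category; so the real work is to unwind ``$\sO$ cofibrant in $\symseqcrkm$'' into usable statements on both sides and then to verify the five hypotheses of Theorem \ref{right-preservation}.

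First I would record what the hypothesis gives. By Proposition \ref{sigmacof-rkm} the assumption that $\sO$ is cofibrant in $\symseqcrkm$ is exactly the condition $\staro$ relative to $\rkm$. By Proposition \ref{rkm-sigmacof-necessary}(2) it also forces $\sO$ to be cofibrant in $\symseqcm$, i.e.\ $\staro$ relative to $\M$ as well. Thus $\sO$ is a $\Sigma$-cofibrant $\fC$-colored operad simultaneously in the two cofibrantly generated monoidal model categories $\M$ and $\rkm$.

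Next I would invoke the results of \cite{white-yau} on algebras over a $\Sigma$-cofibrant colored operad --- the $\Sigma$-cofibrant counterpart of Theorem \ref{berger-moerdijk} --- once in $\M$ and once in $\rkm$, both of which are cofibrantly generated monoidal model categories and which in the examples of interest satisfy the mild smallness conditions needed (cf.\ Proposition \ref{prop:smallness-in-top}). This produces: (i) $\algom$ inherits a cofibrantly generated projective semi-model structure from $\M^{\fC}$ with weak equivalences and fibrations defined entrywise in $\M$; (ii) $\algorkm$ inherits such a structure from $\rkmc$ with weak equivalences and fibrations defined entrywise in $\rkm$; and (iii) in each case the forgetful functor from $\sO$-algebras to $\fC$-colored objects sends cofibrant objects to entrywise cofibrant objects. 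The mechanism behind (iii) is the usual layer-by-layer analysis of a pushout of a free $\sO$-algebra map along a cofibration, whose layers are built from the colored entries of $\sO$ via constructions of the form $X \tensorover{\Sigma_n} (-)^{\boxprod n}$; because $\sO$ is cofibrant in $\symseqcrkm$ these layers are cofibrations in $\rkm$, and no $\clubsuit$ hypothesis on $\rkm$ is needed. Granting (i)--(iii), the hypotheses of Theorem \ref{right-preservation} all hold: (1) is formal, (2) holds since $\rkm$ exists by hypothesis, (3) is (i), (4) is (ii), and (5) --- that $\algorkm \to \rkmc$ preserves cofibrant objects --- is the $\rkm$-instance of (iii). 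Theorem \ref{right-preservation} then yields that $R_K$ preserves $\sO$-algebras.

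The main obstacle is establishing (i)--(iii) \emph{for $\rkm$} rather than merely quoting the $\Sigma$-cofibrant operad machinery for a fixed monoidal model category: one must re-run the pushout and transfinite-composition analysis of free $\sO$-algebra extensions inside $\rkm$, using that $\rkm$ is a genuine monoidal model category (so the pushout product axiom holds in $\rkm$ and iterated pushout products $f^{\boxprod n}$ of $\rkm$-cofibrations are again $\rkm$-cofibrations) together with the hypothesized cofibrancy of the colored entries of $\sO$ in $\symseqcrkm$. The bookkeeping is lightened by the facts that $\M$ and $\rkm$ share fibrations and trivial cofibrations and that every $K$-colocal object is in particular cofibrant in $\M$, so that throughout one simply reads ``cofibrant in $\M$'' as ``$K$-colocal'' ($=$ cofibrant in $\rkm$).
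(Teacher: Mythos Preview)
Your proposal is correct and follows essentially the same route as the paper: reduce to Theorem \ref{right-preservation}, use Proposition \ref{rkm-sigmacof-necessary}(2) to get $\sO$ cofibrant in $\symseqcm$, and then invoke the $\Sigma$-cofibrant operad result (Theorem 6.3.1 of \cite{white-yau}) once in $\M$ and once in $\rkm$ to obtain the two projective semi-model structures together with the preservation of cofibrant objects under the forgetful functor. The paper's proof is terser --- it simply cites Theorem 6.3.1 of \cite{white-yau} three times without your explanatory paragraph on the layer-by-layer mechanism or your ``main obstacle'' discussion --- and it does not pass through Proposition \ref{sigmacof-rkm}/$\staro$, since that characterization is not needed once one is content to quote the $\Sigma$-cofibrant machinery as a black box.
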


\begin{proof}
As before we will use Theorem \ref{right-preservation} with $T$ the monad on $\M^{\fC}$ associated to the colored operad $\sO$.  First note that $\rkm$ exists by assumption.
\begin{enumerate}
\item
$\alg(\sO;\M)$ inherits a semi-model structure from $\M^{\fC}$ by Theorem 6.3.1 in \cite{white-yau}.  This is the case because $\sO$ being cofibrant in $\symseqcrkm$  implies $\sO \in \symseqcm$ is cofibrant (by Proposition \ref{rkm-sigmacof-necessary}(2)).
\item
Likewise, $\alg(\sO;\rkm)$ inherits a semi-model structure from $\rkmc$ by, once again,  Theorem 6.3.1 in \cite{white-yau} applied to $\rkm$ and the fact that $\sO \in \symseqcrkm$ is cofibrant.
\item
That the forgetful functor
\[
\alg(\sO;\rkm) \to \rkmc
\]
preserves cofibrant objects is part of Theorem 6.3.1 in \cite{white-yau}.
\end{enumerate}
\end{proof}

\section{Application: Spaces}
\label{sec:applications}

In this section we provide applications to right Bousfield localizations to topological spaces. For this entire section $\Top$ will denote the category of pointed compactly generated spaces, since Remark 3.1.10 of \cite{hirschhorn} demonstrates that these are the only interesting right Bousfield localizations. Everything in this section is also true for pointed compactly generated weak Hausdorff spaces, using Proposition \ref{prop:smallness-in-top} for the requisite smallness. We will often use the fact that for topological model categories like $\Top$, one can use topological (rather than simplicial) mapping spaces to detect $K$-colocal equivalences. The following is Lemma 2.3 and 2.5 in \cite{christensen-isaksen} together with Example 9.1.15 of \cite{hirschhorn}:

\begin{lemma} \label{lemma:top-internal-hom-test}
Let $K$ be a set of cofibrant objects in a topological model category $\M$ such that the pair $(\M,K$) is right localizable. Then the set
\[\Lambda(K) = 
\bigl\{A \wedge S^{n-1}_+  \to A \wedge D^n_+ \;|\; A\in K,\, n \geq 0\bigr\}
\]
together with the generating trivial cofibrations $\sJ$ of $\M$ form a set of generating cofibrations of $\rkm$.
\end{lemma}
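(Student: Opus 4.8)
The plan is to obtain this as the topological specialization of the general construction of the right Bousfield localization, with a conveniently chosen cosimplicial resolution. Recall from the discussion surrounding \eqref{lambdak} (and from \cite{hirschhorn} Chapter 5, or \cite{christensen-isaksen} Section 2, which is the reference handling the topological case under the weaker hypothesis that $(\M,K)$ be right localizable) that $\rkm$ exists and is cofibrantly generated by $\bigl(\Lambdakbar,\sJ\bigr)$, where $\Lambdakbar = \Lambdak \cup \sJ$ and $\Lambdak = \{\bA \otimes \ddeltan \to \bA \otimes \deltan \mid A \in K,\ n \geq 0\}$ for any chosen cosimplicial resolution $\bA$ of $A$. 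So all that remains is to identify this set, for one good choice of $\bA$, with $\{A \wedge S^{n-1}_+ \to A \wedge D^n_+\}$.

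First I would use that a topological model category is tensored and cotensored over $\Top$, hence over $\sset$ via geometric realization, and set $\bA^n = A \wedge |\deltan|_+$, where $|\deltan|$ is the topological $n$-simplex. Exactly as in the proof of Theorem \ref{rkm-tractable} (the topological form of \cite{hirschhorn} (16.1.4(1))), the hypothesis that $A$ is cofibrant in $\M$ makes this $\bA$ a cosimplicial resolution of $A$ in $\Mdelta$; this is the content of \cite{christensen-isaksen} Lemma 2.3. For this $\bA$, the coend formula for the tensoring together with the fact that geometric realization preserves colimits gives $\bA \otimes \deltan = A \wedge |\deltan|_+$ and $\bA \otimes \ddeltan = A \wedge |\ddeltan|_+$ (the latter being the $n$-th latching object of $\bA$). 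Under the standard homeomorphisms $|\deltan| \cong D^n$ and $|\ddeltan| \cong S^{n-1}$, the map $\bA \otimes \ddeltan \to \bA \otimes \deltan$ is precisely $A \wedge S^{n-1}_+ \to A \wedge D^n_+$. Hence $\Lambdak$ is the set in the statement, and $\Lambdakbar = \Lambdak \cup \sJ$ is a set of generating cofibrations for $\rkm$.

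Finally I would record why the topological mapping spaces implicit in the statement are the right object to test against: by \cite{hirschhorn} Example 9.1.15, in a topological model category the topological mapping space out of a cofibrant object into a fibrant object computes the homotopy function complex $\map(-,-)$. Combined with \cite{christensen-isaksen} Lemma 2.5, this pins down the $K$-colocal equivalences and the $K$-colocal (cofibrant) objects in the topological framework, and is what allows the rest of this section to work with $\Top(A,-)$ — and, ultimately, with the internal hom — in place of $\map$.

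The main obstacle will be the verification that $\bA^n = A \wedge |\deltan|_+$ is genuinely a cosimplicial resolution, i.e., that $\bA$ is Reedy cofibrant in $\Mdelta$ and that each $\bA^n$ is weakly equivalent to $A$; this uses the cofibrancy of $A$ together with the compatibility of the closed monoidal (topological) structure on $\M$ with its model structure, and is the technical heart of \cite{christensen-isaksen} Lemma 2.3. Everything else is the bookkeeping with latching objects and geometric realization sketched above.
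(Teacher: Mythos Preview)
Your proposal is correct and matches the paper's approach: the paper simply records this lemma as ``Lemma 2.3 and 2.5 in \cite{christensen-isaksen} together with Example 9.1.15 of \cite{hirschhorn}'', and what you have written is exactly an unpacking of those citations --- choosing the topological cosimplicial frame $\bA^n = A \wedge |\deltan|_+$, identifying the resulting latching maps with $A \wedge S^{n-1}_+ \to A \wedge D^n_+$, and invoking \cite{hirschhorn} (9.1.15) to justify using topological mapping spaces. The only caveat is that the blanket statement ``$\rkm$ is cofibrantly generated by $(\Lambdakbar,\sJ)$'' in your first paragraph is, in this paper's own Theorem \ref{rkm-exists}(4), stated under the extra hypothesis that every object is fibrant; you correctly sidestep this by appealing directly to \cite{christensen-isaksen} (2.5), whose factorization argument works under the weaker right-localizability hypothesis in the topological setting.
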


Specializing to the case of $\M = \Top$, we find that \$ (Definition \ref{def:rkm-ppa}) is always satisfied.

\begin{theorem} \label{thm:all-monoidal-in-Top}
Let $K$ be any set of cofibrant objects in $\Top$ that are small with respect to the cofibrations. Then $(\Top,K)$ satisfies \$.
\end{theorem}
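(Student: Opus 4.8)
The plan is to reduce the condition $\$$ to a single closure property of the class of $K$-colocal objects of $\Top$. First I would record the shape of $\Lambdakbar$ here. Since every object of $\Top$ is fibrant and every object is small relative to the cofibrations, the pair $(\Top,K)$ is right localizable, so $\rkm$ exists by Theorem~\ref{rkm-exists}; and by Lemma~\ref{lemma:top-internal-hom-test} --- equivalently, by choosing the cosimplicial resolution $\bA^n = A \wedge D^n_+$ in \eqref{lambdak} (valid as in the proof of Theorem~\ref{rkm-tractable}) --- we may take
\[
\Lambda(K) = \bigl\{\, A \wedge S^{n-1}_+ \longrightarrow A \wedge D^n_+ \;\big|\; A \in K,\ n \geq 0 \,\bigr\}, \qquad \Lambdakbar = \Lambda(K) \cup \sJ .
\]
Consequently the set $D$ of domains and codomains of maps in $\Lambdakbar$ consists of the spaces $A \wedge S^{n-1}_+$ and $A \wedge D^n_+$ for $A \in K$ and $n \geq 0$, together with the domains and codomains of the generating trivial cofibrations of $\Top$; every member of $D$ is cofibrant, being a smash product of cofibrant pointed spaces.

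The heart of the matter is the following closure lemma, which I would prove next: \emph{if $W$ is a $K$-colocal object of $\Top$ and $Z$ is any cofibrant pointed space, then $W \wedge Z$ is again $K$-colocal.} Recall that the class of $K$-colocal objects contains $K$ and the zero object $*$, and is closed under homotopy colimits --- indeed $\map(\hocolim_i C_i, -) \simeq \operatorname{holim}_i \map(C_i, -)$, and a homotopy limit of a levelwise weak equivalence is a weak equivalence. Since $\varnothing \to W$ is a cofibration, the pushout product axiom makes $(-) \wedge W$ left Quillen, so it preserves homotopy colimits of cofibrant diagrams. Now $W \wedge S^0 = W$ is $K$-colocal; inductively $W \wedge S^n \simeq \Sigma(W \wedge S^{n-1})$ is a homotopy pushout of two copies of $*$ over a $K$-colocal object, hence $K$-colocal; and $W \wedge D^n \simeq W \wedge * = *$ is $K$-colocal since the pointed disk is contractible. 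Writing $Z$, up to weak equivalence, as a pointed CW complex --- i.e. as an iterated homotopy pushout of its skeleta with attaching data built from wedges of the spheres $S^{n-1}$ and the disks $D^n$ --- and smashing the resulting diagram with $W$ gives a diagram of $K$-colocal objects whose homotopy colimit is $W \wedge Z$. Hence $W \wedge Z$ is $K$-colocal.

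Granting the lemma, $\$$ follows quickly. Fix $A \in D$ and a $K$-colocal equivalence $f : X \to Y$; since every object of $\Top$ is fibrant, the clause ``between fibrant objects'' in $\$$ is vacuous. As $A$ is cofibrant, $\Hom(A,-)$ is right Quillen, so $\Hom(A,X)$ and $\Hom(A,Y)$ are fibrant, and for each $B \in K$ there are natural weak equivalences
\[
\map\bigl(B,\Hom(A,X)\bigr) \simeq \Hom\bigl(B,\Hom(A,X)\bigr) \cong \Hom(B \wedge A, X) \simeq \map(B \wedge A, X),
\]
the outer two because $\Hom$ computes the homotopy function complex on cofibrant--fibrant pairs in the topological model category $\Top$ (cf. Lemma~\ref{lemma:top-internal-hom-test} and the surrounding discussion), using that $B$ and $B \wedge A$ are cofibrant, and the middle one by the exponential law. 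Since $B \in K$, it is $K$-colocal (by the very definition of $K$-colocal equivalence), and $A$ is cofibrant, so by the lemma $B \wedge A$ is $K$-colocal; hence $\map(B \wedge A, f)$ is a weak equivalence. Reading the displayed chain back, $\map(B, \Hom(A,f))$ is a weak equivalence for every $B \in K$, i.e. $\Hom(A,f)$ is a $K$-colocal equivalence. As $A \in D$ was arbitrary, $(\Top,K)$ satisfies $\$$.

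I expect the main obstacle to be the careful execution of the closure lemma: one must work throughout with cofibrant models and genuine homotopy colimits (not the strict colimits of a skeletal filtration), must identify the \emph{reduced} cells of a pointed CW complex so that smashing with $W$ visibly remains within the $K$-colocal objects, and must confirm that the $K$-colocal objects are closed under arbitrary (not merely finite) homotopy colimits. The auxiliary facts --- that $\Hom$ represents $\map$ on cofibrant--fibrant pairs, and that a smash product of cofibrant objects is cofibrant --- are standard features of $\Top$ already used elsewhere in the paper.
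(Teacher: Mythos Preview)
Your argument is correct, but it proceeds by a different mechanism than the paper's. The paper never proves your closure lemma. Instead, it (i) dispatches the contractible members of $D$ directly via $\Hom(A,f)\simeq f$, and (ii) for $A = K_0 \wedge S^j_+$ uses the adjunction rewrite
\[
\Hom\bigl(B,\Hom(K_0\wedge S^j_+,f)\bigr)\;\cong\;\Hom\bigl(B\wedge S^j_+,\Hom(K_0,f)\bigr),
\]
together with the observation that the class of $K$-colocal equivalences coincides with the class of $K'$-colocal equivalences for $K' = \{W \wedge S^m_+ : W \in K,\ m\geq 0\}$; since $\Hom(K_0,f)$ is an honest weak equivalence and $B\wedge S^j_+\in K'$, the result follows. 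Thus the paper works on the \emph{equivalence} side (closing $K$ under suspension), whereas you work on the \emph{object} side (closing the $K$-colocal objects under smashing with arbitrary cofibrant $Z$). Your route buys a stronger auxiliary statement --- the closure lemma itself --- at the cost of invoking CW approximation and closure of $K$-colocal objects under arbitrary homotopy colimits; the paper's route is more elementary, needing only the suspension adjunction, and it yields as a byproduct the identification $R_K = R_{K'}$ recorded in Remark~\ref{remark:monoidal-right-top}. One small point to tidy in your write-up: the cells appearing in $\Lambda(K)$ are $S^{n-1}_+$ and $D^n_+$, not the reduced $S^{n-1}$ and $D^n$ you use in the inductive step of the lemma; the stable splitting $S^{n-1}_+ \simeq S^{n-1}\vee S^0$ bridges the two, so the lemma still covers what you need, but the exposition should make this explicit.
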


\begin{proof}
Suppose $f$ is a $K$-colocal equivalence. Let $A$ be a domain or codomain of a map in $\Lambda(K) \cup \sJ$, so $A$ is of the form $K \wedge S^j_+$, $K \wedge D^j_+$, $D^j_+$, or $(D^j \times I)_+$. First, we argue that when $A$ is contractible \$ is satisfied. We must show $\Hom(A,f)$ is a $K$-colocal equivalence, which is equivalent to $\Hom(K,\Hom(A,f))$ being a weak homotopy equivalence, where $\Hom(A,X)$ has the compact open topology. This means the following map must be an isomorphism for $t \geq 0$:
\[
[S^t,\Hom(K\wedge A,f)] \cong [S^t\wedge K,\Hom(A,f)] \cong [S^t\wedge K,f] \cong [S^t,\Hom(K,f)].
\]
The right-most map is an isomorphism because $f$ is a $K$-colocal equivalence. The middle isomorphism uses that $\Hom(A,f)\simeq f$ because $A\simeq pt$. 

Next, we argue that the class of $K$-colocal equivalences coincides with the class of $K'$-colocal equivalences, where $K'$ is the closure of $K$ under all suspensions $-\wedge S^m_+$. This is because of the following chain of equivalent statements (and it also follows from Lemma 5.5.2 in \cite{hirschhorn}, taking $L$ to be the simplicial circle):

\noindent $f$ is a $K$-colocal equivalence\\
$\Longleftrightarrow$ $\Hom(K,f)$ is a weak equivalence\\
$\Longleftrightarrow$ $[S^t,\Hom(K,f)]$ is an isomorphism for all $t$\\ $\Longleftrightarrow$ $[S^{t+m},\Hom(K,f)]$ is an isomorphism for all $t, m$\\
$\Longleftrightarrow$ $[S^t,\Hom(S^m,\Hom(K,f))]$ is an isomorphism for all $t,m$\\
$\Longleftrightarrow$ $[S^t,\Hom(K\wedge S^m,f)]$ is an isomorphism for all $t,m$\\
$\Longleftrightarrow$ $f$ is a $K'$-colocal equivalence

This implies that $K$-colocal objects are the same as $K'$-colocal objects. Observe that the $K'$-colocal objects contain all objects $A$ of the form $K_0\wedge S^j_+$ for some $K_0\in K$. We now prove that \$ holds for such objects. To see that $\Hom(A,f)$ is a $K$-colocal equivalence, consider
\[
\Hom\bigl(K,\Hom(A,f)\bigr) 
\cong \Hom\bigl(K\wedge S^j_+,\Hom(K_0,f)\bigr).
\]
The map $\Hom(K_0,f)$ is a weak equivalence (hence a $K'$-colocal equivalence) because $f$ is a $K$-colocal equivalence. Since $K\wedge S^j_+ \subseteq K'$, we see that $\Hom\bigl(K,\Hom(A,f)\bigr)$ is a weak equivalence as required.

Since smashing with a contractible object has no effect, this proves that \$ holds for $A$ of the form $K\wedge S^j_+$ or $K\wedge D^j_+$. 
\end{proof}

\begin{remark} \label{remark:monoidal-right-top}
This theorem demonstrates that if one wishes a right Bousfield localization $R_K$ to be monoidal, one may as well right localize with respect to the set $\{\Sigma^m A\}$ over all $A$ in $K$. So for Top, the smallest right Bousfield localization of Definition \ref{def:monoidal-right-bous} can be obtained simply by enlarging $K$ in this way, and since this does not change the resulting $\rkm$, we see again that every right Bousfield localization is monoidal in Top. Note that the step of introducing $K'$ demonstrates that every right Bousfield localization in $\Top$ is stable, even though $\Top$ itself is unstable.
\end{remark}

\begin{example}[$n$-connected covers] \label{ex:n-conn-covers}
Let $K = \{S^m\;|\; m>n\}$, a set of cofibrant objects that are small relative to the cofibrations. In this case $R_K(X) = CW_A(X)$ where $A=S^n$ \cite{chacholski-thesis,farjoun}. The $K$-colocal objects are $X$ with $\pi_{\leq n}(X) = 0$, and the $K$-colocal equivalences are maps $f$ with $\pi_{>n}(f)$ an isomorphism. In this case, $K' = K$ in the proof above, and when $A = S^k_+ \wedge S^j_+$ for $k>n$ then $S^m_+\wedge A \cong S^{m+k+j}_+$ is again an element in $K$, so it is automatic that $\Hom(K,\Hom(A,f))$ is a weak equivalence for any $K$-colocal equivalence $f$. Theorems \ref{thm:all-monoidal-in-Top}, \ref{rkm-ppa}, and  \ref{rkm-tractable} demonstrate that the pushout product axiom is satisfied in $R_K(\Top)$. 
\end{example}

\begin{example} \label{ex:CW_A}
Let $A$ be any CW complex. Using the same reasoning as in the previous example, the right Bousfield localization $R_K(\Top)$, where $R_K(X) = CW_A(X)$ as  studied by Chach\'{o}lski and co-authors \cite{chacholski-thesis}, \cite{chacholski-dwyer-intermont}, \cite{chacholski-parent-stanley} and Nofech \cite{nofech}, forms a  monoidal model category.
\end{example}

An example of this type of colocalization is Mike Cole's mixed model structure on $\Top$, see \cite{may-ponto-more-concise} (19.1.9). We now give examples of preservation of algebra structure under colocalization. As we do not know $\diamond$ holds for $\Top$, we will need to focus on colocalizations and colored operads $\sO$ for which $\star^\sO$ holds.

\begin{example}
Let $K = \{S^1\}$ so that $K$-colocal spaces are precisely those $X$ with $\pi_0(X) = 0$; i.e., $X$ is path connected. The $Com$ operad has $Com(j)=S^0$ for all $j$, so is not entrywise path connected. Similarly, Ass$(j) = S^0[\Sigma_j]$ is a coproduct of copies of $S^0$ so is not path connected. However, an $E_\infty$ operad $\sO$ does have path connected spaces (contractible even), so such an operad is entrywise $K$-colocal. It is easy to check that such an operad is in fact $\Sigma$-cofibrant in the $K$-colocal model structure on symmetric sequences, since the fixed-point property of $E\Sigma_n$ guarantees the existence of an equivariant lift in a lifting problem against a $K$-colocally trivial fibration. It follows that $R_K$ preserves $E_\infty$-algebras. Similarly, the spaces of the $A_\infty$-operad are path connected CW complexes (the Stasheff associahedra), so are $K$-colocal, but we cannot say that $R_K$ preserves algebras over this operad because we do not know $A_\infty$ is $\Sigma$-cofibrant in the $K$-colocal model structure.
\end{example}

We can also build operads guaranteed to satisfy our criteria.

\begin{example} \label{ex:transfer-top-QkCom}
Let $K$ be any set of cofibrant objects. For each $n$, endow $\Top^{\Sigma_n}$ with the projective $K$-colocal model structure. This is possible because $\rkm$ is cofibrantly generated. Consider the free-operad adjunction
\[
\prod_n \Top^{\Sigma_n} \adjoint Op(\Top)
\]
Fresse's Theorem 12.2.A \cite{fresse} proves that there is a transferred semi-model structure, since $\rkm$ has the pushout product axiom, by Theorem \ref{thm:all-monoidal-in-Top}. In fact, it can be made a full model structure following \cite{rezk}, since $\rkm$ has a nice path object. Let $X=Q_K(Com)$ be the cofibrant replacement of $Com$ in this model structure, as in \cite{white-commutative-monoids}. By construction this operad satisfies $\star^X$, so $R_K$ preserves algebras over this operad.
\end{example}

\section{Application: Equivariant Spaces} \label{sec:app-equivar-spaces}

In this section we provide applications to right Bousfield localizations in the model category $\Top^G$ of $G$-equivariant spaces for a compact Lie group or a  finite group $G$.

Let $G$ be a compact Lie group. Let $\M = \Top^G$ denote the category of $G$-equivariant compactly generated spaces (everything would also work if we used compactly generated weak Hausdorff spaces). Let $\Map$ denote the space of all continuous maps, endowed with the conjugation action by $G$. This is the internal Hom object. Let $\Map_G$ denote the space of $G$-equivariant maps, endowed with the compact-open topology. This gives the enrichment of $\Top^G$ in $\Top$, so we may use Lemma \ref{lemma:top-internal-hom-test} with this mapping space. The weak equivalences (resp. fibrations) of $\Top^G$ are maps $f$ such that $H$-fixed points $f^H$ are weak equivalences (resp. fibrations) for all closed subgroups $H<G$. The generating (trivial) cofibrations are 
\[\bigl\{i \wedge (G/H)_+\;|\; H<G \mbox{ closed}\bigr\}
\]
where $i$ is a generating (trivial) cofibration in $\Top$. There is an adjunction
\[
\Map_G\bigl(X \wedge (G/H)_+,f\bigr) \cong \Map(X,f^H)
\]
for any $G$-space $X$.

\begin{proposition} \label{prop:monoidal-right-bous-in-GTop}
Suppose that a set $K$ of cofibrant objects in $\Top^G$ is closed under the operations $-\wedge S^1$ and $-\wedge (G/H)_+$ for all $H<G$.
Then $(\Top^G,K)$ satisfies \$ (Definition \ref{def:rkm-ppa}).
\end{proposition}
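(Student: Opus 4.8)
The plan is to transcribe the proof of Theorem~\ref{thm:all-monoidal-in-Top}, keeping carefully separate the two mapping spaces on $\Top^G$: the internal hom $\Map$ (with the conjugation $G$-action) and its $G$-fixed points $\Map_G$, the latter being the $\Top$-enrichment used via Lemma~\ref{lemma:top-internal-hom-test} to detect $K$-colocal equivalences. The engine of the argument is the following observation: \emph{for every $K_0\in K$ and every $K$-colocal equivalence $f\colon X\to Y$, the map $\Map(K_0,f)$ is an honest weak equivalence in $\Top^G$}, i.e.\ $\Map(K_0,f)^H$ is a weak equivalence of spaces for every closed subgroup $H\leq G$. To prove this, combine the standard identification $Z^H\cong\Map_G((G/H)_+,Z)$ with the closed monoidal adjunction $\Map_G((G/H)_+,\Map(K_0,-))\cong\Map_G((G/H)_+\wedge K_0,-)$ to obtain a natural isomorphism $\Map(K_0,f)^H\cong\Map_G(K_0\wedge(G/H)_+,f)$; since $K_0\wedge(G/H)_+\in K$ by hypothesis and $\Map_G$ computes the homotopy function complex on the fibrant objects $X,Y$ (every object of $\Top^G$ being fibrant), this is a weak equivalence because $f$ is a $K$-colocal equivalence. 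This is exactly where the closure of $K$ under $-\wedge(G/H)_+$ is indispensable: it converts the statement ``$f$ is a $K$-colocal equivalence'', which a priori only controls $\Map_G$ (i.e.\ $G$-fixed-point level data), into information about \emph{all} fixed points.

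Next I would invoke Lemma~\ref{lemma:top-internal-hom-test} to list the objects that occur as a domain or codomain of a map in $\Lambda(K)\cup\sJ$, where now $\sJ$ is the set of generating trivial cofibrations of $\Top^G$, i.e.\ the maps $i\wedge(G/H)_+$ with $i$ a generating trivial cofibration of $\Top$. Up to $G$-homotopy equivalence these objects are: the basepoint (the degenerate $n=0$ cell, which is trivial); the objects $K_0\in K$ together with $K_0\wedge D^j_+\simeq K_0$; the cells $K_0\wedge S^j_+$ with $K_0\in K$; and the objects $(G/H\times C)_+\simeq(G/H)_+$ coming from $\sJ$, with $C$ contractible. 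Condition \$ (Definition~\ref{def:rkm-ppa}) asks that $\Map(A,-)$ send $K$-colocal equivalences between fibrant objects to $K$-colocal equivalences for each such $A$; since every object of $\Top^G$ is fibrant and every weak equivalence in $\Top^G$ is a $K$-colocal equivalence, it suffices to check the families one by one.

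Then I would dispatch the families. For $A\simeq K_0$ the claim follows at once from the engine together with homotopy invariance of $\Map(-,Z)$: $\Map(A,f)$ is a weak equivalence in $\Top^G$, hence a $K$-colocal equivalence. For $A=K_0\wedge S^j_+$, the closed monoidal adjunction in $\Top^G$ gives $\Map(K_0\wedge S^j_+,f)\cong\Map(S^j_+,\Map(K_0,f))$; since $S^j_+$ is cofibrant in $\Top^G$ the functor $\Map(S^j_+,-)$ is right Quillen, and $\Map(K_0,f)$ is a weak equivalence in $\Top^G$ between fibrant objects by the engine, so $\Map(A,f)$ is again a weak equivalence in $\Top^G$, hence a $K$-colocal equivalence. (Here the closure of $K$ under $-\wedge S^1$ plays the bookkeeping role that the passage to $K'$ plays in Theorem~\ref{thm:all-monoidal-in-Top}, and together with the engine it shows, as in Remark~\ref{remark:monoidal-right-top}, that these localizations are effectively stable.) Finally, for $A\simeq(G/H)_+$ coming from $\sJ$, I would argue directly: for each $K_1\in K$, homotopy invariance of $\Map(-,Z)$ and the adjunction give $\Map_G(K_1,\Map(A,f))\simeq\Map_G(K_1\wedge(G/H)_+,f)$, which is a weak equivalence because $K_1\wedge(G/H)_+\in K$ and $f$ is a $K$-colocal equivalence; hence $\Map(A,f)$ is a $K$-colocal equivalence.

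The one step that requires genuine care, rather than routine bookkeeping, is the engine: getting the fixed-point identification $\Map(K_0,f)^H\cong\Map_G(K_0\wedge(G/H)_+,f)$ exactly right, and recognizing that the hypothesis ``$K$ is closed under $-\wedge(G/H)_+$'' is precisely what makes its right-hand side a weak equivalence. Once that is in hand, the remainder is a faithful transcription of the $\Top$ argument, with $\Map$ and $\Map_G$ kept carefully distinct and ``weak equivalence in $\Top^G$'' always understood as ``$H$-fixed-point weak equivalence for every closed subgroup $H\leq G$''.
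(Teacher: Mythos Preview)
Your proof is correct and in fact tighter than the paper's. Both arguments reduce to testing $\Map_G\bigl(B,\Map(A,f)\bigr)$ for $B\in K$ and $A$ a (co)domain in $\Lambda(K)\cup\sJ$, but they organize the adjunctions differently. The paper writes every $A$ as $D_+\wedge(G/H)_+$ and then uses the chain
\[
\Map_G\bigl(B,\Map(A,f)\bigr)\;\cong\;\Map\bigl(B,\Map(A,f)\bigr)^G\;\cong\;\Map\bigl(B,\Map(A,f)^G\bigr)\;\cong\;\Map\bigl(B,\Map_G(A,f)\bigr),
\]
reducing to $\Top(B,g)$ with $g=\Map_G(A,f)$ a non-equivariant weak equivalence. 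Your route instead isolates the fixed-point identity $\Map(K_0,f)^H\cong\Map_G\bigl(K_0\wedge(G/H)_+,f\bigr)$ and uses it to prove the stronger intermediate statement that $\Map(K_0,f)$ is a \emph{genuine} weak equivalence in $\Top^G$; the remaining cases then fall out by right-Quillen functoriality of $\Map(S^j_+,-)$ and one more use of the adjunction for $A\simeq(G/H)_+$.

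This buys you two things. First, the middle isomorphism in the paper's display---commuting $(-)^G$ past $\Map(B,-)$---fails for $G$-spaces $B$ with nontrivial action (already for $B=G_+$), so your engine is the correct formal replacement for that step. Second, as your parenthetical hints, your argument never actually invokes closure of $K$ under $-\wedge S^1$: the engine handles $K_0$ and $K_0\wedge D^j_+$, right-Quillenness of $\Map(S^j_+,-)$ handles $K_0\wedge S^j_+$, and the closure under $-\wedge(G/H)_+$ alone handles both the engine and the $\sJ$ cells. So you have effectively established the proposition under the weaker hypothesis that $K$ is closed only under $-\wedge(G/H)_+$ for all closed $H\leq G$.
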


\begin{proof}
Let $f$ be a $K$-colocal equivalence. Let $A$ be a domain or codomain of a map in $\Lambda(K)\cup \sJ$. Then $A$ is of the form $D_+ \wedge (G/H)_+$ where $D$ is in the set
\[
\bigl\{D^j,D^j\wedge I, K_0\wedge S^j,K_0\wedge D^j\bigr\}. 
\]
over all $K_0 \in K$ and all $j$. We have argued above that contractible objects do not matter to the homotopy type of an internal hom object. We must prove that the map $\Map_G(B,\Map(A,f))$ is a weak equivalence in $\Top$ for all $B\in K$. We use \cite{mandell-may-equivariant} (III.1.6):
\[
\begin{split}
\Map_G\bigl(B,\Map(D_+ \wedge (G/H)_+,f)\bigr) 
&\cong \Map\bigl(B,\Map(D_+ \wedge (G/H)_+,f)\bigr)^G\\
&\cong \Map\bigl(B,\Map(D_+ \wedge (G/H)_+,f)^G\bigr)\\
& \cong \Map\Bigl(B,\Map_G\bigl(D_+ \wedge (G/H)_+,f)\bigr)\Bigr)
\end{split}
\]
Because $D_+ \wedge (G/H)_+$ is in $K$ by hypothesis, $\Map_G(D_+\wedge (G/H)_+,f)$ is a weak equivalence in $\Top$. Forgetting the $G$-action, we are now trying to prove
\[
\Map_G(B,\Map(A,f))\cong \Top(B,g)\]
is a weak equivalence, where $g$ is a weak equivalence in $\Top$. As $B$ is cofibrant and $\Top^G$ is a topological model category, \cite{hovey} (4.2.3) proves that $\Top(B,-)$ is a right Quillen functor, hence preserves weak equivalences between fibrant objects by Ken Brown's Lemma \cite{hovey} (1.1.12). We conclude that $\Map_G\bigl(B,\Map(A,f)\bigr)$ is a weak equivalence as required.
\end{proof}

The hypotheses of this theorem can always be arranged by enlarging $K$ if necessary. We now record the equivariant analogue of the $CW_A$ functors.

\begin{example}
Suppose $A$ is a cofibrant object in $\Top^G$. Denote the \textit{smash closure} of $A$ by
\[
K(A) = \Bigl\{\Sigma^p A^{\wedge q} \wedge (G/H_1)_+ \wedge \cdots \wedge (G/H_r)_+ \;|\; p,r\geq 0,\, q \geq 1;\, \mathrm{all }\, H_k < G\Bigr\}. 
\]
Then $K(A)$ satisfies the closure properties in Proposition \ref{prop:monoidal-right-bous-in-GTop}, so $(\Top^G, K(A))$ satisfies \$.  Note that $K(A)$ is the smallest set of objects containing $A$ that satisfies the closure properties in Proposition \ref{prop:monoidal-right-bous-in-GTop}.  
\end{example}

\begin{example}
More generally, suppose $\cala$ is a non-empty set of cofibrant objects in $\Top^G$.  Then the set
\[
K(\cala) =  \Bigl\{\Sigma^p A_1 \wedge \cdots \wedge A_q \wedge (G/H_1)_+ \wedge \cdots \wedge (G/H_r)_+ \;|\; p,r\geq 0,\, q \geq 1;\, \mathrm{all }\, H_k < G; \mathrm{all}\, A_j \in \cala\Bigr\}
\]
satisfies the closure properties in Proposition \ref{prop:monoidal-right-bous-in-GTop}, so $(\Top^G, K(\cala))$ satisfies \$.  In this case, $K(\cala)$ is the smallest set of objects containing $\cala$ that satisfies the closure properties in Proposition \ref{prop:monoidal-right-bous-in-GTop}.
\end{example}

We now record several examples unique to the setting $\Top^G$. First, consider the case where $G$ is finite.

Let $\sF$ be a nonempty set of subgroups of $G$, and let
\[
K(\sF) = \bigl\{(G/H)_+ \;|\; H\in \sF\bigr\}.
\]
A $K(\sF)$-colocal equivalence is a map $f$ such that
\[
\Map_G\bigl((G/H)_+,f\bigr) \cong f^H
\]
is a weak equivalence in $\Top$ for all $H\in \sF$.
 The fibrations in $R_{K(\sF)}(\Top^G)$ are the same as in $\Top^G$, i.e. maps $f$ such that $f^H$ is a fibration in $\Top$ for all $H<G$. The generating trivial cofibrations are maps of the form
 \[
 D_+^j \wedge (G/H)_+ \to D_+^j \wedge I \wedge (G/H)_+
 \]
 for all $j\geq 0$ and all $H<G$. The generating cofibrations are these maps together with maps of the form
 \[
 S_+^{j-1}\wedge (G/H)_+\to D_+^j \wedge (G/H)_+
 \]
for $j\geq 0$ and $H\in \sF$. Denote the model category $R_{K(\sF)}(\Top^G)$ by $R_\sF(\Top^G)$.

Suppose $\sF$ is a \emph{family}, i.e., closed under conjugation, intersection, and passage to subgroup.  Then there is a model structure $\Top^\sF$ with weak equivalences (resp. fibrations) the maps $f$ such that $f^H$ is a weak equivalence (resp. fibration) in $\Top$ for $H \in \sF$. The generating (trivial) cofibrations are maps of the form $i \wedge (G/H)_+$ where $H\in \sF$ and $i$ is a generating (trivial) cofibration of $\Top$.

\begin{theorem} \label{prop:cellularizations-of-GTop}
The identity functor $\Top^\sF \to R_\sF(\Top^G)$ is a Quillen equivalence. Moreover, $\Top^\sF$ and $R_\sF(\Top^G)$ both satisfy the pushout product axiom.
\end{theorem}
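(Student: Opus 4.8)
The plan is to prove that $\Top^\sF$ and $R_\sF(\Top^G)$ have the same class of weak equivalences, deduce from this that the identity adjunction between them is a Quillen equivalence, and then verify the pushout product axiom in each model structure by means of the orbit decomposition of a product of coset spaces. The weak equivalences of both model structures are, by the definitions recalled just above, exactly the maps $f$ with $f^H$ a weak equivalence in $\Top$ for every $H\in\sF$: for $\Top^\sF$ this is the definition, and for $R_\sF(\Top^G)=R_{K(\sF)}(\Top^G)$ it uses the identification of the homotopy function complex $\map\bigl((G/H)_+,f\bigr)$ with the fixed-point space $\Map_G\bigl((G/H)_+,f\bigr)=f^H$, which is legitimate because $\Top^G$ is a topological model category with every object fibrant and each $(G/H)_+$ cofibrant (cf.\ Lemma \ref{lemma:top-internal-hom-test}).

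For the Quillen equivalence: by Lemma \ref{lemma:top-internal-hom-test} (or Theorem \ref{rkm-exists}(4)) applied to $\Top^G$, the model structure $R_\sF(\Top^G)$ is cofibrantly generated with generating cofibrations $\Lambda(K(\sF))\cup\sJ$ and generating trivial cofibrations $\sJ$, where $\sJ$ denotes the generating trivial cofibrations of $\Top^G$. Now $\Lambda(K(\sF))=\{\,i\wedge(G/H)_+\mid H\in\sF,\ i\ \text{a generating cofibration of }\Top\,\}$ is precisely the generating set for the cofibrations of $\Top^\sF$, so the identity carries cofibrations of $\Top^\sF$ to cofibrations of $R_\sF(\Top^G)$. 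Likewise each generating trivial cofibration $i\wedge(G/H)_+$ of $\Top^\sF$ (with $i$ a generating trivial cofibration of $\Top$) is a cofibration of $R_\sF(\Top^G)$ and an $\sF$-equivalence, hence a weak equivalence of $R_\sF(\Top^G)$ by the previous paragraph; so the identity preserves trivial cofibrations too. Thus $\id\colon\Top^\sF\to R_\sF(\Top^G)$ is left Quillen, and since the two model structures have the same weak equivalences this identity adjunction is automatically a Quillen equivalence.

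For the pushout product axiom: by the reduction to generators \cite{hovey}~(4.2.5) it suffices to compute pushout products of generating (trivial) cofibrations. The two ingredients are the identity $(f_0\wedge W)\boxprod(g_0\wedge W')\cong(f_0\boxprod g_0)\wedge(W\wedge W')$ and the orbit decomposition $(G/H)_+\wedge(G/H')_+=(G/H\times G/H')_+\cong\bigvee_\alpha(G/K_\alpha)_+$, in which the $K_\alpha$ are the isotropy groups $H\cap gH'g^{-1}$ occurring in $G/H\times G/H'$; when $H,H'\in\sF$ and $\sF$ is a family, every $K_\alpha$ again lies in $\sF$. For $\Top^\sF$ this shows that the pushout product of two generating (trivial) cofibrations is the smash of a (trivial) cofibration of $\Top$ (by the pushout product axiom in $\Top$) with a coproduct of coset spaces $(G/K_\alpha)_+$, $K_\alpha\in\sF$, hence a (trivial) cofibration of $\Top^\sF$. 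For $R_\sF(\Top^G)$: since a right Bousfield localization has the same fibrations, hence the same trivial cofibrations, as the ambient category, the trivial cofibrations of $R_\sF(\Top^G)$ are exactly those of $\Top^G$; a pushout product one of whose factors lies in $\sJ$ is therefore a trivial cofibration of $\Top^G$ by the pushout product axiom in $\Top^G$, hence a trivial cofibration of $R_\sF(\Top^G)$, while a pushout product of two maps in $\Lambda(K(\sF))$ is the smash of a cofibration of $\Top$ with a coproduct of coset spaces $(G/K_\alpha)_+$, $K_\alpha\in\sF$, hence a cofibration of $R_\sF(\Top^G)$ because $\Lambda(K(\sF))$ belongs to its generating cofibrations. (If one wishes the axiom to include the unit condition, it is checked similarly by induction over $\sF$-cells, using that the $K(\sF)$-colocalization of the $\otimes$-unit $S^0$ meets each $H$-fixed-point space, $H\in\sF$, in a space weakly equivalent to $S^0$.)

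The main obstacle is the orbit decomposition: it is precisely the hypothesis that $\sF$ is a family---closed under conjugation, intersection, and passage to subgroups---that forces the isotropy groups of $G/H\times G/H'$ to remain in $\sF$, so that pushout products of $\sF$-cells are again built from $\sF$-cells. The only other point requiring a genuine argument is the identification of the homotopy function complex $\map\bigl((G/H)_+,-\bigr)$ with the fixed-point functor $(-)^H$; the remainder is routine bookkeeping with cofibrantly generated model structures.
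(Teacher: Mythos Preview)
Your proof is correct. It differs from the paper's mainly in how the pushout product axiom for $R_\sF(\Top^G)$ is established. The paper invokes its general machinery: it checks condition~\$ (Definition~\ref{def:rkm-ppa}) by computing
\[
\Map_G\bigl((G/H)_+,\Map(A,f)\bigr)\cong\Map(A,f^H)
\]
and appealing to Theorems~\ref{rkm-ppa} and~\ref{rkm-tractable}. You instead give a direct hands-on verification at the level of generating maps, using the orbit decomposition $(G/H)_+\wedge(G/H')_+\cong\bigvee_\alpha(G/K_\alpha)_+$ together with the closure of $\sF$ under conjugation and intersection. Your approach is more elementary and self-contained (it essentially reproduces the content of Fausk's Lemma~2.19, which the paper merely cites for $\Top^\sF$), while the paper's approach illustrates how the theorem fits into the general \$-framework developed earlier. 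For the Quillen equivalence, the paper checks the right adjoint preserves fibrations (an easier condition since $R_\sF(\Top^G)$ has \emph{more} fibrations than $\Top^\sF$ requires), whereas you check the left adjoint preserves (trivial) cofibrations; both are valid and short.
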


\begin{proof}
First, every fibration of $R_\sF(\Top^G)$ is a fibration of $\Top^\sF$ because it is easier to satisfy $f^H$ being a fibration for all $H\in \sF$ than to satisfy it for all $H < G$. Next, these two model categories have the same weak equivalences, so the identity Quillen adjunction is a Quillen equivalence. Next, $\Top^\sF$ satisfies the pushout product axiom by Lemma 2.19 in \cite{fausk}. Fundamentally, this is because $G$ is finite and $\sF$ is closed under intersection, so $G/H_1 \times G/H_2$ with the diagonal action is cofibrant.

By Theorems \ref{rkm-ppa} and \ref{rkm-tractable}, to see that $R_\sF(\Top^G)$ satisfies the pushout product axiom, it is enough to check that \$ holds for $(\Top^G, K(\sF))$.  In the current setting, \$ requires the map
\[
\begin{split}
\Map_G\bigl((G/H)_+, \Map(A, f)\bigr) 
&\cong \Map\Bigl((G/H)_+,\Map(A,f)\Bigr)^G\\
&\cong \Map\Bigl((G/H)_+ \wedge A,f\Bigr)^G\\
&\cong \Map_G\bigl((G/H)_+ \wedge A, f\bigr)\\
&\cong \Map\bigl(A, f^H\bigr)
\end{split}\]
to be a weak equivalence whenever $f$ is a $K(\sF)$-colocal equivalence, $H \in \sF$ (so $(G/H)_+ \in K(\sF)$), and $A$ a (co)domain of a map in $\Lambda(K(\sF)) \cup \sJ$.  Since $H \in \sF$ and $f$ is a $K(\sF)$-colocal equivalence, the map $f^H$ is a weak equivalence in $\Top$ (necessarily between fibrant objects).  Moreover, since every choice of $A$ is cofibrant, $\Map(A,f^H)$ is a weak equivalence as well by Ken Brown's Lemma \cite{hovey} (1.1.12).
\end{proof}

\begin{remark}
This Proposition remains true when $G$ is a compact Lie group, but all subgroups should be closed and $\Top^\sF$ is only a monoidal model category if $\sF$ is an Illman collection, which is automatic in the case when $G$ is finite. See Lemma 2.19 in \cite{fausk} for more details.
\end{remark}

\begin{example}
When $\sF = \{e\}$ we obtain a model structure on $\Top^G$ Quillen equivalent to the \textit{coarse model structure}, where $f$ is a weak equivalence if it is so when viewed in $\Top$. However, $R_{\{e\}}(\Top^G)$ has fewer fibrations and more cofibrations, allowing equivariant cells as well as the usual cells of $\Top$.
\end{example}

\begin{example}
When $\sF = \{G\}$, $R_\sF(\Top^G)$ is a new model structure on $\Top^G$ where a map $f$ is a weak equivalence if and only if $f^G$ is a weak homotopy equivalence in $\Top$.
\end{example}

We are prepared to give examples of preservation of algebraic structure under colocalization. We can mimic Example \ref{ex:transfer-top-QkCom} to put a model structure on the category of $G$-equivariant operads transferred from the projective model structure on $\prod_n (\Top^{G})^{\Sigma_n}$. In this model structure we can take the cofibrant replacement of $Com$, and it is an equivariant $E_\infty$-operad, which we shall call $\mathscr{E}$. This operad plays an important role in the search for algebraic models for equivariant spectra. We will show a preservation result for this operad. Note, however, that this is the wrong operad to encode complete equivariant commutativity (including norms) in $G$-spectra, because it does not allow for mixing of the $G$-action with the $\Sigma_n$-action. The correct operads to study for norms are the $N_\infty$-operads of \cite{blumberg-hill}, further studied in \cite{gutierrez-white}.

\begin{example}
The colocalizations $K(F)$ above preserve algebras over the operad $\mathscr{E}$. This is because $\mathscr{E}$ is $\Sigma$-cofibrant in $\prod_n (\Top^{G})^{\Sigma_n}$ and in $\prod_n \Top^{\Sigma_n}$, after forgetting the $G$-action, so $\mathscr{E}$-algebras inherit a semi-model structure in any ($G$-)topological model category. The model $R_{K(F)}(\Top^G)$ is a topological model category, as can readily be seen by following the proof in \cite{hirschhorn} that any right Bousfield localization of a simplicial model category is simplicial. We have already shown this $R_{K(F)}$ is an \textit{enriched} colocalization in the sense of \cite{gutierrez-transfer-quillen}, so it should not come as a surprise that the resulting model category $R_{K(F)}(\Top^G)$ is also enriched. With this semi-model structure in hand, we can easily prove $R_{K(F)}$ preserves $\mathscr{E}$-algebras, following Theorem \ref{preservation-sigmacof}.
\end{example}

\begin{example}
Let $K = \{S^{n+1}\}$, so that $K$-colocal objects are $n$-connected covers. Then both the operad $\mathscr{E}$ and the $N_\infty$ operads of \cite{blumberg-hill} are objectwise $K$-colocal and $\Sigma_n$-free. They therefore satisfy $\star^\sO$, and hence their algebras are preserved by taking $n$-connected covers.
\end{example}

\section{Application: Chain Complexes}
\label{app:chain}

In this section we provide applications to right Bousfield localizations in the model category of chain complexes of modules over a commutative ring $R$.  Colocalizations in these contexts are well-studied, as examples below will demonstrate. Recall the projective model structure on $\Ch_{\geq 0}(R)$ from \cite{dwyer-spalinski}. This model structure is combinatorial, closed symmetric monoidal, and all objects are bifibrant, i.e. both fibrant and cofibrant. Recall also the projective model structure on unbounded chain complexes $\Ch(R)$ from \cite{hovey}. This model structure is combinatorial, closed symmetric monoidal, and has all objects fibrant. Lastly, recall from \cite{hess-rational-survey} the category $\Ch^{\geq 0}(k)$ of non-negatively graded cochain complexes. It is combinatorial, closed symmetric monoidal, and has all objects fibrant. There are also formally dual non-positively graded chain complexes, cochain complexes, and unbounded cochain complexes.

In all three cases let $S(n)$ be the chain complex with $R$ in degree $n$ and 0 everywhere else, and let $D(n)$ be the chain complex with $R$ in degrees $n$ and $n-1$ (or $n+1$ for cochain complexes), 0 otherwise, and the identity map as the differential $d_n$. The monoidal product $\otimes$ is defined by 
\[
(C_\bullet \otimes D_\bullet)_n = \bigoplus_{i+j=n} (C_i \otimes_R D_j)
\]
and the internal $\Hom$ is defined by
\[
\Hom(X,Y)_n = \prod_k \Hom_R(X_k,Y_{n+k})
\]
We will record several examples of colocalizations in these settings. 

\begin{example} \label{ex:CW_A-chains}
If $A$ is an object of $\M$ then colocalization with respect to $K = \{A\}$ gives CW$_A$ by analogy with Example \ref{ex:CW_A}, and this can be viewed as $A$-cellular homological algebra.
$\rkm$ is the model categorical analogue of the localizing subcategory of $\D(R)$ generated by $A$, i.e. the smallest subcategory containing $A$ and closed under retracts and coproducts. This is the subcategory generated by $A$ under homotopy colimits.
\end{example}

An example of this type of colocalization is the mixed model structure on $\Ch(R)$ from \cite{may-ponto-more-concise} (19.1.10).

\begin{example} \label{ex:n-conn-cover-chains}
Let $K = \{S(n)\}$ for some $n$. Then the $K$-colocal objects are the $X$ such that $H_{<n}(X) = 0$, and the $K$-colocal equivalences are maps $f$ such that $H_{\geq n}(f)$ is an isomorphism. The functor $R_K$ can be viewed as an $n$-connected cover. Of course, this example is a special case of the above, and demonstrates that $\rkm = R_{K'}(\M)$ for $K' = \{S(m)\;|\;m\geq n\}$.
\end{example}

\begin{example} \label{ex:loc-coloc-triangles}
To any localization in $\M$ at a set of maps $S$, we can assign a colocalization with respect to the cofibers of $S$ as in \cite{barnes-roitzheim-stable} (9.2). The resulting triangles $R_K(X)\to X\to L_S(X)$ are much studied in the theory of triangulated categories, see \cite{neeman-book}, \cite{hovey-palmieri-strickland}.
\end{example}

We learned the following two examples from Bill Dwyer.

\begin{example}
Let $R$ be the integers and $p$ be a prime number. There is a colocalization such that $X$ is colocal precisely when $H_i(X)$ is $p$-torsion for all $i$. More generally, the authors of \cite{barnes-roitzheim-stable} study the monoidal properties of the analogous colocalization of a general commutative ring $R$ and a perfect $R$-module $A$. In this setting the colocal objects are the $A$-torsion $R$-modules.
\end{example}

\begin{example}
Let $I$ be a finitely generated ideal of $R$. There is a colocalization such that $X$ is colocal precisely when for all $i$ and all $x\in H_i(X)$, there is some integer $k$ with $I^kx = 0$. This allows for the study of local cohomology at $I$ using colocalization.
\end{example}

In addition, in \cite{bauer-coloc-chain-spectra} and \cite{bauer-boardman-colocalizations}, Bauer developed machinery to lift colocalizations of chain complexes to categories of spectra, giving yet another application of colocalizations for chain complexes.

\begin{lemma}\label{lemma:chains-internal-hom-test}
Let $K$ be a set of cofibrant objects in any of our models $\M$ of (co)chain complexes
 such that the pair $(\M,K$) is right localizable. Then the set
\[
\Lambda(K) = 
\bigl\{A \otimes_R S(n-1)  \to A \otimes_R D(n) \;|\; A\in K
\bigr\}
\]
together with the generating trivial cofibrations $\sJ$ of $\M$ form a set of generating cofibrations of $\rkm$. Here $n$ runs through all degrees of complexes in $\M$.
\end{lemma}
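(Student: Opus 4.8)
The plan is to transcribe the proof of the topological analogue, Lemma~\ref{lemma:top-internal-hom-test}, into these algebraic settings, systematically replacing the homotopy function complex $\map(A,-)$ by the internal hom $\Hom(A,-)$; this substitution is exactly the standing reduction used throughout the paper for closed symmetric monoidal examples, and it is legitimate here because each $A\in K$ is cofibrant, every object of $\M$ is fibrant, and in these categories $K$-colocal equivalences are detected by $\Hom(A,-)$. Since $\M$ is one of our combinatorial (co)chain categories with every object fibrant and $(\M,K)$ is right localizable, Theorem~\ref{rkm-exists}(4) already gives that $\rkm$ exists and is cofibrantly generated. Writing $\Lambda(K)\cup\sJ$ for the concrete set in the statement --- so $\Lambda(K)=\{A\otimes_R S(n-1)\to A\otimes_R D(n)\mid A\in K\}$ with $n$ ranging over all degrees, where $S(-1)=0$ and $D(0)=S(0)$ in the bounded cases --- I would reduce the lemma to two checks: (i) $\Lambda(K)\cup\sJ$ permits the small object argument; and (ii) a map is injective with respect to $\Lambda(K)\cup\sJ$ precisely when it is a trivial fibration of $\rkm$. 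Check (i) is immediate since $\M$ is combinatorial, so all domains are small.

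For (ii), take a map $p\colon X\to Y$. Having the right lifting property against $\sJ$ is exactly being a fibration in $\M$, equivalently a fibration in $\rkm$. For the maps $A\otimes_R S(n-1)\to A\otimes_R D(n)$, the tensor--hom adjunction $(-)\otimes_R A\dashv\Hom(A,-)$ shows that, for a fixed $A\in K$, $p$ lifts against all of them (over all degrees $n$) if and only if $\Hom(A,p)$ lifts against $\{S(n-1)\to D(n)\}$ over all degrees $n$. The point is that in each of our categories this last set \emph{is} a set of generating cofibrations of $\M$: for unbounded $\Ch(R)$ it is Hovey's set $\{S(n-1)\to D(n)\}_{n\in\Z}$; in the bounded and cochain cases one must read the extremal degree with the conventions $S(-1)=0$, $D(0)=S(0)$, so that the degenerate term is the unit cofibration $0\to S(0)$ (and dually for cochain complexes, where $D(n)$ is supported in degrees $n$ and $n+1$). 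It follows that $p$ is $(\Lambda(K)\cup\sJ)$-injective if and only if $p$ is a fibration in $\M$ and $\Hom(A,p)$ is a trivial fibration in $\M$ for every $A\in K$.

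It then remains to match this description with the trivial fibrations of $\rkm$, i.e.\ the fibrations of $\M$ that are $K$-colocal equivalences. Since $\M$ is a monoidal model category and $A$ is cofibrant, $(-)\otimes_R A$ is left Quillen, so $\Hom(A,-)\colon\M\to\M$ is right Quillen and, as every object is fibrant, computes the derived internal hom; hence $p$ is a $K$-colocal equivalence exactly when $\Hom(A,p)$ is a weak equivalence in $\M$ for all $A\in K$. Now if $p$ is a trivial fibration of $\rkm$, then $p$ is a fibration in $\M$ and each $\Hom(A,p)$ is both a fibration and a weak equivalence, hence a trivial fibration, in $\M$; conversely, if $p$ is a fibration in $\M$ and each $\Hom(A,p)$ is a trivial fibration in $\M$, then each $\Hom(A,p)$ is a weak equivalence in $\M$, so $p$ is a $K$-colocal equivalence and therefore a trivial fibration of $\rkm$. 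This proves (ii), and with it the lemma.

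I expect the main obstacle to be the index bookkeeping inside (ii): one must be sure that letting $n$ run over \emph{all} degrees --- in particular the extremal degenerate case --- reproduces, after applying $\Hom(A,-)$, the full set of generating cofibrations of $\M$. If that degenerate term (the unit cofibration $0\to S(0)$) were omitted, the class of $(\Lambda(K)\cup\sJ)$-injective maps would be strictly larger than the trivial fibrations of $\rkm$; for instance in $\Ch_{\geq 0}(\Z)$ the map $\Z[0]\xrightarrow{\,\cdot 2\,}\Z[0]$ has the right lifting property against $\{S(n-1)\to D(n)\}_{n\geq 1}$ and against the trivial cofibrations, yet is not a quasi-isomorphism. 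The remaining ingredients --- smallness, the tensor--hom adjunction, $\Hom(A,-)$ being right Quillen, the internal hom modeling the derived internal hom, and the explicit generating cofibrations of each $\M$ --- are routine in these categories and are precisely the analogues of the facts used for Lemma~\ref{lemma:top-internal-hom-test}.
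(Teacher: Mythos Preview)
Your argument is correct and takes a somewhat different route from the paper's. The paper's proof is a two-line sketch: it cites Christensen--Isaksen's Lemma~2.3 for the detection of $K$-colocal trivial fibrations and their Lemma~2.5 for the factorization (instructing the reader to replace $\Delta[n]$ by $D(n)$ and $\partial\Delta[n]$ by $S(n-1)$ throughout), then notes that smallness comes from combinatoriality. You instead first invoke Theorem~\ref{rkm-exists}(4) to know that $\rkm$ already exists and is cofibrantly generated, and then verify directly---via the tensor--hom adjunction and the explicit generating cofibrations of $\M$---that the concrete set $\Lambda(K)\cup\sJ$ detects the trivial fibrations of $\rkm$. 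Your direct argument is essentially what one obtains by unpacking Christensen--Isaksen's Lemma~2.3 in the chain-complex setting, but it is more self-contained and sidesteps the separate appeal to their factorization Lemma~2.5 (factorization is automatic once the model structure is known to exist and the set permits the small object argument). Your careful bookkeeping of the degenerate index (reading $S(-1)=0$, $D(0)=S(0)$ in the bounded cases so as to recover the map $0\to S(0)$) is a point the paper does not spell out.

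Both approaches rest on the same key identification, which you flag explicitly in your opening paragraph and which the paper adopts as a standing convention in its remark after the definition of $K$-colocal equivalence: that for cofibrant $A$ and fibrant targets, $\map(A,f)$ is a weak equivalence of simplicial sets if and only if $\Hom(A,f)$ is a quasi-isomorphism. Your use of this is entirely consistent with the paper's framework, so there is no gap relative to the paper's own argument.
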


\begin{proof}
That these maps detect fibrations between fibrant objects by lifting is Lemma 2.3 of \cite{christensen-isaksen}, and already appears in \cite{hirschhorn} (Ch. 5). To see that these maps provide a factorization system, proceed exactly as in the proof of Lemma 2.5 of \cite{christensen-isaksen}, but replacing $\Delta[n]$ everywhere by $D(n)$ and replacing $\partial \Delta[n]$ by $S(n-1)$. This proof boils down to the small object argument, which is guaranteed to stop because $\M$ is combinatorial. Indeed, all that is required in order for Christensen-Isaksen's Hypothesis 2.4 to hold is that the domains of $J$ are small relative to the cofibrations. 
\end{proof}

\begin{theorem} \label{thm:chains-all-monoidal}
Let $\M = \Ch(R)$ denote any of our categories of chain complexes.  Then every right Bousfield localization $\rkm$ is monoidal, i.e. satisfies Condition \$.
\end{theorem}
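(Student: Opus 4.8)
The plan is to follow the proof of Theorem~\ref{thm:all-monoidal-in-Top} almost verbatim, with the chain complexes $S(n)$ and $D(n)$ playing the roles of the spheres and disks. By Definition~\ref{def:rkm-ppa} and Lemma~\ref{lemma:chains-internal-hom-test}, verifying \$ amounts to showing that whenever $A$ is a domain or codomain of a map in $\Lambdakbar = \Lambdak \cup \sJ$ — that is, $A$ is one of $0$, $D(n)$, $A_0 \otimes_R S(n-1)$, or $A_0 \otimes_R D(n)$ with $A_0 \in K$ — the functor $\Hom(A,-)$ sends $K$-colocal equivalences between fibrant objects to $K$-colocal equivalences. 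First I would dispose of the contractible cases: each of $0$, $D(n)$, and $A_0 \otimes_R D(n)$ is chain-homotopy equivalent to $0$ (the identity of $D(n)$ is null-homotopic, and tensoring preserves this), so for every $Z$ the complex $\Hom(A, Z)$ is chain-homotopy equivalent to $\Hom(0, Z) = 0$, naturally in $Z$; hence $\Hom(A, f)$ is a quasi-isomorphism for every $f$, hence a weak equivalence in $\M$ and therefore a $K$-colocal equivalence by \cite{hirschhorn}~(3.3.3(2)(a)).

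The substantive case is $A = A_0 \otimes_R S(n-1) \cong A_0[n-1]$, a shift of an element of $K$. As in the topological argument I would introduce the set $K'$ obtained by closing $K$ under the shift functors $-\otimes_R S(m)$, and check that $K$-colocal equivalences coincide with $K'$-colocal equivalences: using the tensor--hom adjunction $\Hom(A_0 \otimes_R S(m), Z) \cong \Hom(A_0, Z)[-m]$ together with the fact that shifting preserves quasi-isomorphisms (and, in the bounded variants $\Ch_{\geq 0}(R)$ and $\Ch^{\geq 0}(k)$, that the good truncation defining the internal hom does too), this is exactly the chain of equivalences in the proof of Theorem~\ref{thm:all-monoidal-in-Top}, transcribed into $\Ch(R)$; in particular every $A_0 \otimes_R S(n-1)$ is a $K$-colocal object, which also secures the standing hypothesis of Theorem~\ref{rkm-ppa}. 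Now let $f : X \to Y$ be a $K$-colocal equivalence between fibrant objects and let $A_1 \in K$. The tensor--hom adjunction gives a natural isomorphism
\[
\Hom\bigl(A_1,\, \Hom(A_0 \otimes_R S(n-1),\, f)\bigr)
\;\cong\;
\Hom\bigl(A_1 \otimes_R S(n-1),\, \Hom(A_0, f)\bigr).
\]
Since $A_0 \in K$ and $f$ is a $K$-colocal equivalence between fibrant objects, $\Hom(A_0, f)$ is a weak equivalence in $\M$ between fibrant objects (here I use that in these categories the internal hom computes the homotopy function complex, as in Lemma~\ref{lemma:chains-internal-hom-test}, and that $\Hom(A_0,-)$ is right Quillen because $A_0$ is cofibrant). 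As $A_1 \otimes_R S(n-1) \in K'$ is cofibrant, $\Hom(A_1 \otimes_R S(n-1),-)$ is right Quillen and hence carries the weak equivalence $\Hom(A_0,f)$ to a weak equivalence. Therefore $\Hom(A_1, \Hom(A,f))$ is a quasi-isomorphism for every $A_1 \in K$, i.e. $\Hom(A,f)$ is a $K$-colocal equivalence; and since tensoring with the acyclic complex $D(n)$ changes nothing homotopically, the codomain $A_0 \otimes_R D(n)$ reduces to a case already treated. Thus \$ holds, and $\rkm$ is monoidal by Theorem~\ref{rkm-ppa} (equivalently Corollary~\ref{rkm-monoidal}).

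The step most in need of care is the bookkeeping hidden inside "transcribe the topological chain of equivalences": in the unbounded model $\Ch(R)$ one must confirm that the generating cofibrations of $\rkm$ involve only the shifts $S(j)$ that colocal equivalences can actually detect, and in the bounded models one must track the good truncation built into the internal hom when commuting it past shifts and past $\Hom(A_1,-)$. This is also where the earlier input — that the internal hom, rather than a framed $\map$, detects $K$-colocal equivalences — is genuinely used. These are routine checks carried out variant by variant; everything else parallels the proof of Theorem~\ref{thm:all-monoidal-in-Top} essentially word for word.
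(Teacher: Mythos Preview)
Your proposal is correct and follows essentially the same route as the paper: dispose of the contractible (co)domains, introduce the shift-closure $K'$, verify via the tensor--hom adjunction that $K$-colocal equivalences and $K'$-colocal equivalences coincide, and then conclude for $A = A_0 \otimes_R S(n-1)$. The only cosmetic difference is in the final step: the paper argues directly that $A_0 \otimes_R S(n) \in K'$ makes $\Hom(A_0 \otimes_R S(n), f)$ a quasi-isomorphism and then applies $\Hom(B,-)$, whereas you shuffle via the adjunction to $\Hom(A_1 \otimes_R S(n-1), \Hom(A_0, f))$ first; both finish the same way using that $\Hom(\text{cofibrant},-)$ preserves quasi-isomorphisms between fibrant objects.
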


\begin{proof}
Let $f$ be a $K$-colocal equivalence, $B\in K$, and let $A$ be a domain or codomain of a map in $\Lambda(K)\cup J$. We must show that $\Hom(B,\Hom(A,f))$ is a quasi-isomorphism. Here $A$ has the form $C \otimes_R S(n), C\otimes_R D(n), D(n)$, or 0, where $C\in K$. Since tensoring with a contractible object does not change anything, we are reduced to the case $A = C$ and $A = C\otimes_R S(n)$. For $A = C$ we know that $\Hom(C,f)$ is a quasi-isomorphism because $C \in K$ and $f$ is a $K$-colocal equivalence. For $A = C\otimes_R S(n)$, proceed as in the proof of Theorem \ref{thm:all-monoidal-in-Top} and note that $R_K$ is the same as $R_{K'}$ where $K' = \{W \otimes S(n) \;|\; W\in K, n=1,2,\dots \}$ is the stabilization of $K$ under shift suspension. To see that the class of $K$-colocal equivalences coincides with the class of $K'$-colocal equivalences, use the following chain of equivalent statements:

\noindent $f$ is a $K$-colocal equivalence\\
$\Longleftrightarrow$ $\Hom(K,f)$ is a quasi-isomorphism\\
$\Longleftrightarrow$ $H_t(\Hom(K,f))$ is an isomorphism for all $t$\\ 
$\Longleftrightarrow$ $[S(t),\Hom(K,f)]_{\Ch(R)}$ is an isomorphism for all $t$\\
$\Longleftrightarrow$ $[S(t)\otimes S(m),\Hom(K,f)]_{\Ch(R)}$ is an isomorphism for all $t, m$\\
$\Longleftrightarrow$ $[S(t),\Hom(S(m),\Hom(K,f))]_{\Ch(R)}$ is an isomorphism for all $t,m$\\
$\Longleftrightarrow$ $[S(t),\Hom(K\otimes S(m),f)]_{\Ch(R)}$ is an isomorphism for all $t,m$\\
$\Longleftrightarrow$ $\Hom(K\otimes S(m),f)$ is a quasi-isomorphism for all $m$\\
$\Longleftrightarrow$ $f$ is a $K'$-colocal equivalence

This implies that $K$-colocal objects are the same as $K'$-colocal objects. Since $C\otimes_R S(n)$ is in $K'$, $C\otimes_R S(n)$ is $K$-colocal.  So $\Hom(C\otimes_R S(n), f)$ and 
\[
\Hom\Bigl(B,\Hom(C\otimes_R S(n),f)\Bigr)\]
are quasi-isomorphisms as required. 
\end{proof}

\begin{remark}
Stable and monoidal right Bousfield localizations have been studied in \cite{barnes-roitzheim-stable}. There, stability means the class of $K$-colocal equivalences is closed under suspension, while the class of $K$-colocal objects is closed under desuspension. 
Our notion is different, and appears to be satisfied more frequently.
To \cite{barnes-roitzheim-stable}, a monoidal right Bousfield localization has $K$ closed under the monoidal product. They prove that $\rkm$ is stable and monoidal if $K$ and $\M$ are stable and monoidal. 
As Theorem \ref{rkm-monoidal} and Theorem 6.2 in \cite{barnes-roitzheim-stable} are both `if and only if' results, our Theorem recovers theirs in case $\M$ is stable and $K$ is closed under desuspension.
\end{remark}

We now record the dual of Proposition 7.5 in \cite{white-localization}.

\begin{proposition}
Let $k$ be a field of characteristic zero. The only right Bousfield localizations of $\Ch(k)_{\geq 0}$ are n-connected covers as in Example \ref{ex:n-conn-cover-chains}.
\end{proposition}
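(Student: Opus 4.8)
The plan rests on two elementary facts: that a right Bousfield localization of $\M := \Ch(k)_{\geq 0}$ is determined by its weak equivalences, and that every chain complex of $k$-vector spaces is formal. For the first: $\M$ is combinatorial and right proper with every object bifibrant, so $R_K(\M)$ exists for any set $K$ of objects (their cofibrancy is automatic); and since each $R_K(\M)$ has exactly the fibrations of $\M$, two such localizations with the same weak equivalences also have the same trivial fibrations, and hence the same cofibrations (those with the left lifting property against trivial fibrations), so they coincide. It therefore suffices to show that for every $K$ the class of $K$-colocal equivalences equals the class of $\{S(n)\}$-colocal equivalences for a suitable $n$, namely the maps $f$ with $H_{\geq n}(f)$ an isomorphism (Example \ref{ex:n-conn-cover-chains}); the resulting model structure is then the $n$-connected cover. (The degenerate case where every object of $K$ is acyclic should be recorded separately: then all maps are $K$-colocal equivalences and $R_K(\M)$ is the indiscrete model structure, a limiting ``$\infty$-connected cover''.)

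For the formality: choosing, in each $X \in \M$, a vector-space complement to the cycles inside $X_m$ and to the boundaries inside the cycles exhibits $X$ as a direct sum of $H_*(X)$ (with zero differential) and contractible ``disk'' complexes $D(m)$, so $X$ is quasi-isomorphic to $\bigoplus_{m \geq 0} S(m)^{\oplus \dim_k H_m(X)}$. Consequently $\Ho(\M)$ is equivalent to the category of non-negatively graded $k$-vector spaces, and for $A, X \in \M$ and $i \geq 0$ one has
\[
\pi_i\, \map(A, X) \;\cong\; [A[i], X]_{\Ho(\M)} \;\cong\; \prod_{m \geq 0} \Hom_k\bigl(H_m(A),\, H_{m+i}(X)\bigr),
\]
which also matches the internal-hom description of the mapping complex used in this section. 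Since $f \colon X \to Y$ is a $K$-colocal equivalence iff $\map(A, f)$ is a weak equivalence for every $A \in K$, and since a product of linear maps is an isomorphism exactly when each factor is while $\Hom_k(V,-)$ reflects isomorphisms for $V \neq 0$, $f$ is a $K$-colocal equivalence iff $H_{m+i}(f)$ is an isomorphism whenever $i \geq 0$ and $H_m(A) \neq 0$ for some $A \in K$.

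Now put $n := \min\{\, m \geq 0 : H_m(A) \neq 0 \text{ for some } A \in K \,\}$ (which exists once $K$ has a non-acyclic object). As $i$ runs over $i \geq 0$ and $A$ over $K$, the indices $m + i$ above sweep out exactly $\{ s : s \geq n \}$, so the condition becomes ``$H_s(f)$ is an isomorphism for all $s \geq n$'', i.e.\ $f$ is an $\{S(n)\}$-colocal equivalence; hence $R_K(\M) = R_{\{S(n)\}}(\M)$ is the $n$-connected cover (with $n = 0$ returning $\M$ itself), which is the claim. I do not expect a serious obstacle; the point requiring the most care is the formality computation of $\pi_* \map(A, X)$ --- equivalently, the identification of $\Ho(\M)$ with graded vector spaces together with its mapping spaces --- and the bookkeeping that pins the colocal equivalences down to the single half-line $s \geq n$ rather than something larger, plus remembering the acyclic case. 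I would also remark that the characteristic-zero hypothesis is never used and the argument works over any field; it is retained only for parallelism with the dual statement in \cite{white-localization}.
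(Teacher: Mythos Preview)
Your proposal is correct and follows essentially the same route as the paper: both arguments use formality over a field to reduce every object to a coproduct of spheres $S(j)$, so that the colocalization is determined by a single cutoff degree $n$. The paper's proof is a terse sketch phrased in terms of ``nullifying'' spheres, whereas you carry out the computation of $\pi_*\map(A,X)$ explicitly and read off the colocal equivalences directly; your version is more detailed, handles the degenerate acyclic-$K$ case, and correctly observes that characteristic zero is not actually used.
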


\begin{proof}
Over any principal ideal domain, the homotopy type is determined by $H_*$, so this means adding weak equivalences is equivalent to nullifying some object. All objects are coproducts of spheres $S(j)$, and killing $k^2$ in degree $n$ is the same as killing $k$ in degree $n$. Thus, the colocalization is completely determined by the highest degree in which the first nullification occurs. The colocalization is therefore equivalent to $\rkm$ for $K = \{S(n)\}$ for that highest degree $n$, and kills everything below that degree. 
\end{proof}

In \cite{white-localization} an example demonstrated that a left localization of unbounded chain complexes could fail to be monoidal. There does not appear to be a corresponding example for right localizations, leading to the conclusion that it is easier for $\rkm$ to satisfy the pushout product axiom than for a left localization. However, it is more difficult for $R_K$ to preserve operad-algebra structure than for a left localization to do so, as the following example shows. 

\begin{example} \label{ex:chain-coloc-destroys}
For $\M$ any of our categories of chain complexes, there is a right Bousfield localization that destroys monoid structure. Consider the first connective cover functor $R_K$ for $K = \{S(1)\}$. The $K$-colocal objects $X$ have $H_{\leq n}(X) = 0$ and a $K$-colocal equivalence is a map $f$ inducing an isomorphism on $H_{>n}(f)$. 

This colocalization fails to preserve algebraic structure, even over a $\Sigma$-cofibrant operad in $\M$. For example, suppose $A$ is a (unital) differential graded algebra. Then $A$ has a unit map $\eta:S(0)\to A$ and a multiplication $\mu:A\otimes_R A \to A$ such that $S(0)\otimes_R A \to A\otimes_R A \to A$ is an isomorphism. However, $R_K(A)$ cannot have a DGA structure because the unit map $S(0) \to R_K(A)$ is nullhomotopic, since $H_0(R_K(A)) = 0$. It follows that $S(0)\otimes_R R_K(A) \to R_K(A)$ is nullhomotopic, and not an isomorphism.
\end{example}

\begin{remark}
In this example, the operad $A_\infty$ is not objectwise $K$-colocal. We are about to prove that this is the only way a right localization of chain complexes can fail to preserve structure. If we restrict attention to non-unital DGAs then the operad \textit{is} $K$-colocal for $K = \{S(1)\}$ so this colocalization preserves non-unital DGAs.
\end{remark}

\begin{theorem}
\label{thm:chain-over-field}
Let $k$ be a field of characteristic zero and $\M = \Ch(k)$ be any of our categories of chain complexes. Then $\M$ satisfies $\diamond$. Furthermore, for any colored operad $\sO$ valued in $\M$, $\sO$ satisfies $\star^{\sO}$ if $\sO$ is objectwise $K$-colocal.
\end{theorem}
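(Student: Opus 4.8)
The plan is to exploit the fact that over a field $k$ of characteristic zero the group algebra $k[\Gamma]$ is semisimple for every finite group $\Gamma$ (Maschke's theorem); concretely, $1/|\Gamma|$ lies in $k$, so a non-equivariant solution to an equivariant lifting problem can be \emph{averaged} into an equivariant one. Both $\diamond$ and the condition $\staro$ of Definition~\ref{def:condition-star} are instances of this averaging argument, applied to $\Gamma=\sigman$ in the first case and to automorphism groups of profiles in the second; nothing here is sensitive to which of the variants of $\Ch(k)$ we take, since all we use is that $\Ch(k)$ is $k$-linear and that $\rkm$ exists.

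To verify $\diamond$, fix $n\geq 1$, let $X\in\msigmanop$ have $K$-colocal underlying $\M$-object, let $\beta\colon U\to V$ in $\msigmanop$ be, underlying, both a fibration in $\M$ and a $K$-colocal equivalence, and let $g\colon X\to V$ be the given map. By definition of $\rkm$, a fibration in $\M$ that is a $K$-colocal equivalence is exactly a trivial fibration in $\rkm$, and by Theorem~\ref{rkm-exists}(2) the $K$-colocal objects of $\M$ are exactly the cofibrant objects of $\rkm$; hence $\initial\to X$ has the left lifting property against $\beta$ in $\rkm$, producing a (non-$\Sigma_n$-equivariant) chain map $\lambda_0\colon X\to U$ with $\beta\lambda_0=g$. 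Now set $\lambda=(1/n!)\sum_{\sigma\in\sigman}\sigma_U\,\lambda_0\,\sigma_X^{-1}$, where $\sigma_{(-)}$ denotes the action of $\sigma$. I would then check that $\lambda$ is again a chain map (a $k$-linear combination of chain maps, using that $\Hom_{\Ch(k)}(X,U)$ is a $k$-module on which composition and the $\Sigma_n$-action are $k$-bilinear), that $\lambda$ is $\Sigma_n$-equivariant (reindex the sum), and that $\beta\lambda=g$ (use $\gamma_V\beta=\beta\gamma_U$ and $\gamma_V g=g\gamma_X$, valid because $\beta$ and $g$ are maps in $\msigmanop$). Then $\lambda$ is a lift in $\msigmanop$, so $\diamond$ holds.

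For $\staro$, let $\sO$ be a $\fC$-colored operad in $\M$ that is objectwise $K$-colocal, and let $p\colon E\to B$ in $\symseqcm$ be entrywise both a fibration in $\M$ and a $K$-colocal equivalence. Fix $\singledbrc\in\sigmacopc$. The groupoid $\sigmabrcop$ is finite and connected, with automorphism group $\Gamma:=\Sigma_{k_1}\times\cdots\times\Sigma_{k_r}$, where the $k_i$ are the multiplicities of the distinct colors in $\uc$; thus $\M^{\sigmabrcop}$ is equivalent to the category of $\Gamma$-objects in $\M$, and the lifting problem \eqref{dc-lift} becomes an equivariant lifting problem in which the underlying $\M$-object of the source $\sO\singledbrc$ is $K$-colocal (hence cofibrant in $\rkm$) and the underlying map of $p\singledbrc$ is a fibration in $\M$ and a $K$-colocal equivalence (hence a trivial fibration in $\rkm$). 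Exactly as in the proof of $\diamond$, I would choose a non-equivariant lift and average it over $\Gamma$; since $|\Gamma|$ is invertible in $k$, this yields a $\Gamma$-equivariant lift, i.e. the required dotted arrow in $\M^{\sigmabrcopd}$. As $\singledbrc\in\sigmacopc$ was arbitrary, $\sO$ satisfies $\staro$.

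I do not expect a genuine obstacle here; the only points needing care are (a) recognizing "(entrywise) a fibration in $\M$ together with a $K$-colocal equivalence" as "(entrywise) a trivial fibration in $\rkm$", so that a non-equivariant lift exists against the $K$-colocal (hence $\rkm$-cofibrant) source, and (b) checking that the averaged map is still a morphism of chain complexes and still a lift of the given square — both of which reduce to $k$-linearity of composition and of the group action together with the equivariance of the maps already present in the square. Characteristic zero enters only through the invertibility of $|\Gamma|$ in $k$; in positive characteristic this averaging fails, which is precisely why the corresponding preservation results require $\sO$ to be suitably $\Sigma$-cofibrant (compare Example~\ref{ex:chain-coloc-destroys}).
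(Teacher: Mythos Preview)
Your proof is correct and follows essentially the same approach as the paper: both rely on Maschke's theorem in characteristic zero to promote non-equivariant lifts (obtained because $K$-colocal means $\rkm$-cofibrant and the map in question is an $\rkm$-trivial fibration) to equivariant ones. The paper phrases this more tersely as ``the characteristic zero assumption guarantees the $\Sigma_n$-equivariance is no obstacle'' and concludes that objectwise $K$-colocal symmetric sequences are projectively cofibrant in $\symseqc(\rkm)$ (which is equivalent to $\staro$ by Proposition~\ref{sigmacof-rkm}), whereas you write out the averaging $\lambda = \tfrac{1}{|\Gamma|}\sum_{\gamma}\gamma_U\lambda_0\gamma_X^{-1}$ explicitly; your version is more detailed but the content is the same.
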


By Theorem \ref{preservation-sigmacof} we deduce

\begin{corollary}
For any of our categories of chain complexes over a field of characteristic zero, and any set of cofibrant objects $K$, $R_K$ preserves $\sO$-algebras for any colored operad $\sO$ that is objectwise $K$-colocal.
\end{corollary}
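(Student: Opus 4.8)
The plan is to derive everything from a single \emph{averaging} argument of Maschke type, which is available precisely because each of our categories $\Ch(k)$ is $k$-linear and $|\Sigma_n| = n!$ is invertible in $k$ when $\operatorname{char} k = 0$. First I would record two standing reductions. Each of our categories of (co)chain complexes is combinatorial and right proper, so $(\M,K)$ is right localizable for any set $K$ of cofibrant objects and $\rkm$ exists by Theorem \ref{rkm-exists}; consequently the $K$-colocal objects of $\M$ are exactly the cofibrant objects of $\rkm$, and a map that is at once a fibration in $\M$ and a $K$-colocal equivalence is precisely a trivial fibration of $\rkm$. Second, $\Ch(k)$ is $k$-linear: each hom-set is a $k$-module, composition is $k$-bilinear, and any finite $k$-linear combination of chain maps is again a chain map.

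To prove $\diamond$, fix $n \geq 1$, an object $X \in \msigmanop$ whose underlying complex is $K$-colocal, a map $\beta : U \to V$ in $\msigmanop$ whose underlying map is a fibration and a $K$-colocal equivalence, and an equivariant map $g : X \to V$. Forgetting the $\Sigma_n$-actions, $\beta$ is a trivial fibration of $\rkm$ and $X$ is cofibrant in $\rkm$, so ordinary lifting produces an a priori non-equivariant chain map $\lambda_0 : X \to U$ with $\beta\lambda_0 = g$. I would then set
\[
\lambda \;=\; \frac{1}{n!}\sum_{\sigma \in \Sigma_n} \sigma\cdot\lambda_0,
\]
where $\sigma\cdot\lambda_0$ denotes the conjugate of $\lambda_0$ by the action of $\sigma$ on $X$ and $U$. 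By $k$-linearity this is a chain map; being a normalized sum over the whole group it lies in $\msigmanop$, i.e. is $\Sigma_n$-equivariant; and $\beta\lambda = \tfrac{1}{n!}\sum_\sigma \sigma\cdot(\beta\lambda_0) = \tfrac{1}{n!}\sum_\sigma \sigma\cdot g = g$ since $g$ is already equivariant. So $\lambda$ is the required lift, and $\diamond$ holds.

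For the second assertion, let $\sO$ be a $\fC$-colored operad in $\M$ that is objectwise $K$-colocal, let $p : E \to B$ in $\symseqcm$ be entrywise both a fibration and a $K$-colocal equivalence, and fix an equivariant map $\sO\singledbrc \to B\singledbrc$ in $\M^{\sigmabrcopd}$ for some $\singledbrc \in \sigmacopc$. The groupoid $\Sigma_{[\uc]}$ is \emph{connected}, hence equivalent to the automorphism group $G_{\uc}$ of $\uc$, and by the analysis of self-maps of a profile in the proof of Lemma \ref{tensor-over-sigmac} one has $G_{\uc} \cong \Sigma_{k_1}\times\cdots\times\Sigma_{k_r}$, the product of symmetric groups on the color-blocks of $\uc$; thus $|G_{\uc}| = k_1!\cdots k_r!$ divides $|\uc|!$ and is invertible in $k$. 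Exactly as before, after forgetting the group action $\sO\singledbrc$ is cofibrant in $\rkm$ and $p\singledbrc$ is a trivial fibration of $\rkm$, so a non-equivariant lift exists, and averaging it over $G_{\uc}$ yields a $G_{\uc}$-equivariant --- hence $\Sigma_{[\uc]}$-equivariant --- lift in $\M^{\sigmabrcopd}$. As $\singledbrc$ was arbitrary, $\star^{\sO}$ holds; combined with Proposition \ref{sigmacof-rkm} this shows $\sO$ is cofibrant in $\symseqcrkm$, and since $\Ch(k)$ and every $\rkm$ are cofibrantly generated monoidal model categories by Theorem \ref{thm:chains-all-monoidal}, Theorem \ref{preservation-sigmacof} then delivers the corollary.

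The routine parts are the standing reductions and the verification that the averaged map is a chain map, is equivariant, and still lifts $g$. The only points requiring attention are identifying $\Sigma_{[\uc]}$ with a finite group whose order divides $|\uc|!$ (so that the averaging scalar is legitimate) and observing that ``objectwise $K$-colocal'' is exactly the hypothesis needed to produce the initial non-equivariant lift. There is no genuine obstacle, and that is the moral of the theorem: in characteristic zero the additive structure of $\Ch(k)$ makes equivariance free, whereas for $\Top$ one would instead need $\Sigma_n$-freeness of the objects in question (cofibrancy of $E\Sigma_n$), which is why $\diamond$ is only known there in special cases.
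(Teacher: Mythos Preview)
Your proof is correct and follows essentially the same route as the paper: both arguments reduce to Maschke-type reasoning in characteristic zero to show that equivariance poses no obstacle to lifting, hence $\diamond$ and $\star^{\sO}$ hold, and then invoke Theorem \ref{preservation-sigmacof}. The paper's treatment is terser---it simply asserts that ``the characteristic zero assumption guarantees the $\Sigma_n$-equivariance is no obstacle'' and that objectwise $K$-colocal operads are projectively cofibrant in $\symseqc(\rkm)$---whereas you write out the explicit averaging $\lambda = \tfrac{1}{|G|}\sum_\sigma \sigma\cdot\lambda_0$ and identify $\Sigma_{[\uc]}$ with $\Sigma_{k_1}\times\cdots\times\Sigma_{k_r}$; this added detail is helpful but not a different strategy. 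One minor remark: your observation that $k_1!\cdots k_r!$ divides $|\uc|!$ is true but unnecessary, since in characteristic zero every nonzero integer is already invertible.
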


\begin{proof}[Proof of Theorem \ref{thm:chain-over-field}]
Since $k$ has characteristic zero, all symmetric sequences are projectively cofibrant in the model category $\symseqc(\M)$ by Maschke's Theorem. This is because every module with a $\Sigma_n$-action (i.e. every $X \in \M^{\Sigma_n}$) is $\Sigma_n$-free.
The same reasoning shows that $K$-colocal objects $X$ in $(\rkm)^{\Sigma_n}$ have $\Sigma_n$-equivariant lifts against fibrations, because the characteristic zero assumption guarantees the $\Sigma_n$-equivariance is no obstacle. So $\diamond$ holds. For the second part, let $\sO$ be objectwise $K$-colocal. Then $\sO$ is projectively cofibrant in $\symseqc(\rkm)$, because each $\sO \smallbinom{d}{[\uc]}$ is projectively cofibrant in $(\rkm)^{\Sigma_n}$.
\end{proof}

\begin{example}
For any of the $n$-connected cover colocalizations of Example \ref{ex:n-conn-cover-chains}, $R_K$ preserves algebras over any $E_\infty$ operad $\sO$, when $k$ has characteristic zero. This is because all spaces $\sO(n)$ have $H_i(\sO(n)) = 0$ for all $i$, hence are $K$-colocal. Similarly, commutative differential graded algebras are preserved.
\end{example}

\begin{example}
Suppose $(\M,K)$ is right localizable and that the $K$-colocalization functor can be chosen to be lax monoidal (e.g. see \cite{gutierrez-transfer-quillen} (5.6)). Then for any colored operad $\sO$, the sequence $R_K\sO$ defined by
\[
(R_K\sO)\smallbinom{d}{[\uc]} = R_K\left(\sO \smallbinom{d}{[\uc]}\right)\]
is a colored operad over $\rkm$. By construction, this colored operad is objectwise $K$-colocal, so $R_K$ preserves algebras over $R_K\sO$.
\end{example}

Note that this example is more difficult for topological spaces, because in that setting $R_K\sO$ need not be $\Sigma$-cofibrant even if $\sO$ is.

\begin{example}
Suppose $(\M,K)$ is right localizable and satisfies \$. Consider the adjunction given by the free operad functor
\[
\prod_{n\geq 0} \M^{{\Sigma_n}} \adjoint Op(\M)
\]
Give $\M^{{\Sigma_n}}$ the projective $K$-colocal model structure. This is possible because the $K$-colocal model structure is cofibrantly generated.
Fresse's Theorem 12.2.A \cite{fresse} gives a transferred semi-model structure on $Op(\M)$, and for $\M = \Ch(k)$ this can be made into a full model structure by Theorem 6.1.1 of \cite{hinich}, using the fact that $k$ has characteristic zero. The operad $Com$ is defined to be the operad with $Com(n)=k$ for all $n$. Let $X=Q_K(Com)$ be the cofibrant replacement of $Com$ in this transferred model structure, as in \cite{white-commutative-monoids}. By construction this operad satisfies $\star^X$, and hence $R_K$ preserves $Q_K(Com)$-algebras.
\end{example}

\begin{remark}
Returning to Example \ref{ex:chain-coloc-destroys} we see that the failure of $R_K$ to preserve differential graded structure is due to the fact that both the associative operad and the $A_\infty$ operad do not have $K$-colocal spaces. Recall that $A_\infty(j) = k[\Sigma_j]$, hence has non-trivial homology. Similarly, the colored operad whose algebras are $\mathfrak{C}$-colored operads will not be preserved by $n$-connected covers for $n < |\mathfrak{C}|$.
\end{remark}

\subsection{Equivariant Chain Complexes}

The previous material can be generalized to the equivariant setting in the same way we generalized from $\Top$ to $\Top^G$. Now $G$ is a group acting on $R$ and on all $R$-modules. A cofibrantly generated, monoidal model structure can be placed on $\Ch(R)^G$ with all objects fibrant. The authors are unaware of any papers studying this model structure, let alone colocalizations therein. We believe this is a valuable example to develop intuition about equivariant spectra, an important example not included in our theory because not all objects are fibrant. Furthermore, we wonder if Bauer's work in \cite{bauer-boardman-colocalizations} and \cite{bauer-coloc-chain-spectra} could be generalized to this setting, so that equivariant colocalizations of chains would lift to the level of spectra.

\subsection{Simplicial Abelian Groups}

The category of simplicial abelian groups has a cofibrantly generated model structure \cite{quillen} in which all objects are fibrant. This category is equivalent to the category of bounded below chain complexes, by the Dold-Kan Theorem. The normalized chains functor $N$ is a natural isomorphism, compatible with the model structures, and is monoidal by \cite{schwede-shipley-equivalences} (4.1).

It follows that all our preservation results about chain complexes hold in this setting as well. In particular, every right localizable $(\M,K)$ satisfies \$, a host of examples is given above, and for any colored operad $\sO$ whose spaces are all $K$-colocal, $R_K$ preserves $\sO$ algebras.

\subsection{Cotorsion pairs} \label{subsec:cotorsion}

The first author and Daniel Bravo are currently working out the theory of abelian left and right Bousfield localization. Similarly to Section \ref{sec:monoidal}, conditions are given so that a left or right Bousfield localization of an abelian model category is again an abelian model category. By the Hovey correspondence \cite{hovey-cotorsion}, abelian model structures are in one-to-one correspondence with Hovey triples. A Hovey triple $(\cat Q, \cat W, \cat F)$, consists of classes of cofibrant, acyclic, and fibrant objects, such that $(\cat Q \cap \cat W, \cat F)$ and $(\cat Q, \cat W \cap \cat F)$ are complete cotorsion pairs. The power of this method is that it allows one to work with objects instead of morphisms. All of the model structures we have considered on chain complexes and $R$-modules can be encoded by Hovey triples. In particular, there is a Hovey triple for the projective model structure on $Ch(R)$, where the weak equivalences are the quasi-isomorphisms, all objects are fibrant, and cofibrant objects are the dg-projective chain complexes. There is also a Hovey triple for the flat model structure \cite{gillespie-2004-flat} where the weak equivalences are the quasi-isomorphisms, the fibrant objects are the dg-cotorsion complexes, and the cofibrant objects are the dg-flat complexes. This model structure is monoidal, by \cite{gillespie-qcoh} (Theorem 5.7).

Since right Bousfield localization changes the cofibrant objects in a controlled way, it is tempting to connect the flat model structure and the classical projective model structure on $Ch(R)$ by changing the flat model structure so that the new cofibrant objects are dg-projective complexes. Formally, beginning with the flat model structure on $Ch(R)$ \cite{gillespie-2004-flat} and taking $K$ to be the set $\{R\}$ (which generates the projective modules), the resulting model category $R_K(\M)$ has fibrant objects the dg-cotorsion complexes, and 
cofibrant objects the degreewise projective complexes. The weak equivalences are defined as maps that factor into a trivial cofibration followed by a trivial fibration. The class of weak equivalences is characterized by the property that $Hom(f,S^0(R))$ is a quasi-isomorphism for all $K$-colocal equivalences $f$. Equivalently, $Hom_R(\text{cone}(f),R)$ is exact. The class of $K$-colocal equivalences strictly contains the quasi-isomorphisms, so the resulting model structure is not Quillen equivalent to the usual projective model structure on chain complexes \cite[Section 2.3]{hovey}. However, it is a new setting one could use to study the interplay between the notions of projective, flat, and cotorsion modules. 

\section{Application: Stable Module Category}
\label{app:stmod}

In this section we provide applications to right Bousfield localizations in the stable module category.  The stable module category is a triangulated category of $R$-modules much studied in representation theory \cite{happel-book}. Here $R$ is a quasi-Frobenius ring (i.e. projective modules and injective modules coincide), and projective modules have been set to zero. Equivalently, two maps are homotopic if their difference factors through a projective module. The triangulated structure is given by defining, for a given $R$-module $M$, $\Omega(M)$ to be the kernel of a map from a projective onto $M$; the inverse $\Omega^{-1}$ is the cokernel of $M\to I$ for $I$ injective. If $R = k[G]$ for a field $k$ and a finite group $G$, then this category is the homotopy theory of a combinatorial model category where all objects are bifibrant \cite{hovey} (2.2). If $R$ is of the form $k[G]$ where $k$ is a principal ideal domain and $G$ a finite group, then it is the homotopy category of a combinatorial model category with all objects fibrant, the projective model structure of \cite{hovey-cotorsion} (8.6), which coincides with \cite{hovey} (2.2) if $k$ is a field. Furthermore, the projective model structure satisfies the pushout product axiom and the monoid axiom \cite{hovey-cotorsion} (9.5), with monoidal product $-\otimes_K-$ and the diagonal $G$-action. 

We focus on the case $R = k[G]$ for a field $k$, and denote the model structure on $R$-mod of \cite{hovey} (2.2) by $\M$.  
Unlike our work on chain complexes, we now do \textit{not} want $k$ to be a field of characteristic zero. If it were, then all modules would be projective, so the stable module category would be trivial. The only interesting case when $k$ is a field is the modular case, where the characteristic of $k$ divides the order of $G$. The primary operads of interest in $\M$ are the associative and commutative operads, which encode associative $R$-algebras and commutative $R$-algebras, respectively.

Colocalizations have been studied in this context in \cite{benson-iyengar-krause-stratifying-localizing}, \cite{rickard-idempotent}, \cite{benson-carlson-rickard-complexity1}, \cite{benson-carlson-rickard-complexity2}, \cite{beligiannis-reiten} (for Torsion theories), \cite{bravo-gillespie-hovey}, \cite{shamir-coloc-stmod} and \cite{groth-stovicek}, and in highly related contexts in \cite{inassaridze-coloc-thick} and \cite{bokstedt-neeman-holims} (Section 6). In this setting, there is a relationship between colocalization and localization functors as in Example \ref{ex:loc-coloc-triangles}, and it now gives a natural equivalence between such functors \cite{hovey-palmieri-strickland} (3.1.6). Additionally, these localization-colocalization pairs are intimately related to recollements.

Example 4.1 of \cite{white-localization} demonstrates that not every left Bousfield localization of $\M$ is monoidal. Thus, we do not expect every right Bousfield localization of $\M$ to be monoidal. Corollary 1.2 of \cite{benson-iyengar-krause-colocalizing} gives a bijective correspondence between monoidal left localizations and monoidal right localizations, so in fact we know there \textit{must} be an example where the pushout product axiom fails for $\rkm$. A candidate would be colocalization with respect to the cofiber of the map from Example 4.1 of \cite{white-localization}. To avoid such examples, we shall assume condition \$ for the remainder of the section.

\begin{example}
The colocalization of \cite{rickard-idempotent} (5.8) with respect to a tensor ideal subcategory, satisfies \$ by construction. See Theorem 5.19 of \cite{rickard-idempotent}.
\end{example}

\begin{theorem} \label{thm:stmod-clubsuit}
Let $R = k[G]$ where $k$ is a field and $G$ is a finite group. The model structure $\M$ of \cite{hovey} (2.2), for the stable module category, satisfies  $\clubsuit$. Since all objects are cofibrant, this implies $\M$ satisfies $\spadesuit$ from \cite{white-yau} (6.1.1), and the strong commutative monoid axiom from \cite{white-commutative-monoids}. Furthermore, for any right localizable, stable $K$ satisfying \$, $R_K(\M)$ satisfies $\clubsuit$.
\end{theorem}

\begin{proof}
Let $X$ be an $R$-module with a $\Sigma_n$-action. Cofibrations are monomorphisms, so the functor $X\otimes_{\Sigma_n}(-)^{\boxprod n}$ automatically preserves cofibrations. We must prove it preserves trivial cofibrations. For any trivial cofibration $f$, $f^{\boxprod n}:A\to B$ is a trivial cofibration. Let $C$ be the cokernel of this map, and note that $C$ is nullhomotopic. Apply the functor $X\otimes_{\Sigma_n}(-)$. Since this functor preserves projective modules, it also preserves all nullhomotopic objects of $\M$ because they are generated by the projectives. Since $X\otimes_{\Sigma_n}(-)$ is a left adjoint, $X\otimes_{\Sigma_n}C$ is the cokernel of $X\otimes_{\Sigma_n}A \to X\otimes_{\Sigma_n}B$, and so this map is a weak equivalence as required. The strong commutative monoid axiom is the special case $X = k$, the unit of $\M$.

The ``furthermore'' part follows in much the same way. First, because $\M$ is a combinatorial, stable model category in which all objects are bifibrant, we have very good control over the $K$-colocal equivalences and the generating (trivial) cofibrations of $\rkm$. The trivial cofibrations are the same as in $\M$, so are preserved by the functors $X\otimes_{\Sigma_n}(-)^{\boxprod n}$ just as above.
To see that these functors preserve $K$-colocal cofibrations, it is enough that they preserve $K$-colocal objects. This is easy to verify, since the property of being $K$-colocal is detected using homotopy function complexes.
\end{proof}

We note that in the case $R = k[G]$ for a principal ideal domain $k$, rather than a field $k$, the cofibrant objects of the projective model structure $\M$ on $R$-modules are the Gorenstein projective modules \cite{hovey-cotorsion} (8.6). We do not know if the functors $X\otimes_{\Sigma_n} (-)^{\boxprod n}$ preserve Gorenstein projectivity. The following is a formal consequence of Theorem \ref{thm:stmod-clubsuit}, though of course direct proofs are also possible.

\begin{corollary}
There are combinatorial stable model category structures on algebras over any colored operad in $\M = k[G]$-mod, wherein maps that factor through a projective module are null.  Furthermore:
\begin{enumerate}
\item
In these model categories, all objects are fibrant. In particular, the categories of associative, commutative, Lie, $A_\infty$, and $E_\infty$, and $L_\infty$-algebras all have model structures.
\item
For any right localizable, stable $K$ satisfying \$, algebras over any entrywise $K$-colocal colored operad in $\rkm$ have transferred semi-model structures. If the associative (resp. commutative) operad is entrywise colocal, then there is a full model structure on its category of algebras in $\rkm$.
\end{enumerate}
\end{corollary}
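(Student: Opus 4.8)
The plan is to derive the corollary formally from the Theorem just proved --- that $\M = k[G]$-mod (and, for any $\$$-satisfying right localizable stable $K$, also $\rkm$) satisfies $\clubsuit$, hence the strong commutative monoid axiom --- together with the colored-operad transfer machinery of \cite{white-yau} and the standard facts about the stable module category: $\M$ is combinatorial, closed symmetric monoidal, every object is bifibrant \cite{hovey} (2.2) (so in particular the $\otimes$-unit $k$ is cofibrant), and $\M$ satisfies the monoid axiom \cite{hovey-cotorsion} (9.5). For the unlocalized statement, all of the hypotheses of the colored-operad transfer theorem of \cite{white-yau} --- cofibrant generation, the monoid axiom, $\clubsuit$, and the smallness needed for the small object argument (automatic, $\M$ being combinatorial) --- are in force, so for every $\fC$-colored operad $\sO$ in $\M$ the category $\algom$ carries a cofibrantly generated model structure with weak equivalences and fibrations created entrywise in $\M^{\fC}$; the semi-model version is Theorem 6.2.3 of \cite{white-yau}, and the strong form of $\clubsuit$ together with the monoid axiom promotes it to a genuine model structure (for the associative and commutative operads this is also \cite{ss} and \cite{white-commutative-monoids} respectively). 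This structure is again combinatorial, since the free $\sO$-algebra monad $\sO \circ (-)$ is accessible --- the circle product is built from coproducts, tensor products, and colimits over finite groupoids, all accessible on the presentable category $\M^{\fC}$ --- so the transferred model structure is combinatorial by the usual criterion; and ``maps factoring through a projective are null'' simply records that $\Ho(\M)$ is the stable module category, a property of the underlying homotopy relation that is visibly inherited by $\algom$ because its weak equivalences are detected on underlying objects.

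Part (1) is then immediate: fibrations in $\algom$ are created in $\M^{\fC}$ and every object of $\M$ is fibrant, so every $\sO$-algebra is fibrant. Taking $\sO$ to be the one-colored associative operad produces a model structure on associative $k[G]$-algebras, and the one-colored commutative operad produces one on commutative $k[G]$-algebras; for the latter the relevant input is exactly the strong commutative monoid axiom, i.e. the case $X = k$ of $\clubsuit$, supplied by the Theorem (compare \cite{white-commutative-monoids}).

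For part (2), combinatoriality of $\M$ makes $(\M,K)$ right localizable, and since every object of $\M$ is fibrant Theorem \ref{rkm-exists}(4) produces a cofibrantly generated model category $\rkm$; condition $\$$ makes $\rkm$ monoidal (for combinatorial $\M$ one may appeal to \cite{gutierrez-roitzheim} (2.12); cf. the remark after Theorem \ref{rkm-ppa}). By the ``furthermore'' clause of the Theorem, $\rkm$ satisfies $\clubsuit$. Applying Theorem 6.2.3 of \cite{white-yau} to the cofibrantly generated monoidal model category $\rkm$ therefore yields a transferred semi-model structure on $\algorkm$ for every $\fC$-colored operad $\sO$ that is entrywise $K$-colocal --- this is Theorem \ref{rkm-algebra-model}, except that there $\diamond$ was used to secure $\clubcofk$, whereas here $\clubsuit$ for $\rkm$ is handed to us directly. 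When $\sO$ is the associative (resp. commutative) operad and is entrywise $K$-colocal, one promotes this semi-model structure to a full model structure: every object of $\M$, hence of $\rkm$, is fibrant, so Proposition \ref{path-object-rkm} supplies functorial path objects for fibrant $\sO$-algebras in $\rkm$, and --- exactly as for the analogous statement over $\Top$ in Example \ref{ex:transfer-top-QkCom} --- the Rezk-style bootstrap of \cite{rezk}, fed by these path objects together with the strong (commutative) monoid axiom for $\rkm$, upgrades the semi-model structure to a genuine one.

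The one genuinely non-formal point is this last upgrade inside $\rkm$. Right Bousfield localization discards cofibrations, so the $\otimes$-unit need not be cofibrant in $\rkm$ and it is unclear whether $\rkm$ inherits the monoid axiom from $\M$; the usual Schwede--Shipley passage to a full model structure on monoids is thus unavailable, and one must instead rely on the two features peculiar to this setting --- that every object of $\M$ is fibrant (so path objects persist in $\rkm$, Proposition \ref{path-object-rkm}) and that $\rkm$ still satisfies $\clubsuit$ (the Theorem) --- precisely the inputs the preceding Theorem's proof already isolates. Everything else (combinatoriality, creation of weak equivalences and fibrations, universal fibrancy, and the stable-module-category flavor) is bookkeeping on top of the ambient hypotheses and the quoted transfer theorems.
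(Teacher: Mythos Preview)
Your argument is broadly correct and close to the paper's, but it diverges in two places, one of which matters.

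For the unlocalized full model structure, the paper's route is simpler than yours: every object of $\M$ is cofibrant, so every $\sO$ is automatically entrywise cofibrant and Theorem 6.2.3 of \cite{white-yau} applies; and since every object is cofibrant, the resulting semi-model structure \emph{is} a full model structure (the cofibrancy side-conditions in the semi-model axioms become vacuous). You instead invoke the monoid axiom and a promotion principle, which is unnecessary machinery here.

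For the upgrade to a full model structure on associative and commutative algebras in $\rkm$, you explicitly declare the monoid axiom for $\rkm$ ``unclear'' and route around it via Proposition \ref{path-object-rkm} and a Rezk-style path-object argument. The paper does the opposite: it invokes \cite{barnes-roitzheim-stable} (6.2) to deduce the monoid axiom for $\rkm$ from the stability hypothesis on $K$ --- this is precisely where stability is used --- and then applies \cite{ss} (associative case) and \cite{white-commutative-monoids} (commutative case) directly. Your detour is not only longer but has a gap: Proposition \ref{path-object-rkm} requires a (cocommutative) coalgebra interval in $\M = k[G]\text{-mod}$, and neither you nor the paper supplies one; the analogy with Example \ref{ex:transfer-top-QkCom} is not enough, since that example lives in $\Top$, which has an evident interval. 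The paper's monoid-axiom route avoids this issue entirely.
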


\begin{proof}
Let $\sO$ be any colored operad in $\M$. As all objects are cofibrant, $\sO$ is entrywise cofibrant, so Theorem 6.2.3 of \cite{white-yau} proves there is a transferred semi-model structure over $\M$. Since all objects in $\M$ are cofibrant, that semi-model structure is a model structure. In these model structures, a map $f$ is a weak equivalence or fibration if it is so in $\M$. The proof for $\rkm$ is similar, but we use Theorem \ref{rkm-algebra-model} to prove that the transferred semi-model structure exists. For for associative operad, we use \cite{barnes-roitzheim-stable} (6.2) to deduce the monoid axiom for $\rkm$, and the main result of \cite{ss} for the claimed model structure. For the commutative operad, we use the main result of \cite{white-commutative-monoids}, since $\rkm$ satisfies the commutative monoid axiom and the monoid axiom.
\end{proof}


\begin{example}
Let $\M = k[G]$-mod and let $K = \{k\}$ where $k$ has the trivial $G$ action. The colocalization of $(\M,K)$ is studied in \cite{beligiannis-reiten} (IV.2.7). Since $k$ is the monoidal unit, the commutative operad is objectwise $K$-colocal. Let $K'$ denote the stable, monoidal closure of $K$, following \cite{barnes-roitzheim-stable}. Then $R_{K'}$ preserves commutative monoids. Let $K''$ be the closure of $K'$ and $\{k[\Sigma_n]\;|\;n=1,2,\dots \}$. Then $R_{K''}$ preserves both commutative monoids and associative monoids. 
\end{example}

We conclude this section by pointing out several other potential applications of our main results in algebraic settings.

\begin{remark}
The paper \cite{inassaridze-coloc-thick} conducts colocalization in the model category of \cite{joachim-johnson}, for Kasparov $K$-theory. An interesting open problem is whether or not this model category is monoidal, and whether or not that colocalization preserves algebraic structure. Similarly, \cite{becker} proved that the degreewise injective model structure on Ch(R) of \cite{gillespie-degreewise} is a right Bousfield localization of the Inj-model structure of \cite{bravo-gillespie-hovey}. For rings $R$ with monoidal Inj-model structure, our work gives a new approach to studying the monoidal properties of the degreewise injective model structure. Lastly, \cite{murfet-thesis} considered a colocalization with respect to local cohomology in the category of quasi-coherent sheaves over a scheme $X$. This is a monoidal model category by \cite{gillespie-qcoh}, and one could therefore ask about preservation of algebraic structure by this colocalization. 
\end{remark}

\section{Application: Small Categories}
\label{app:small-cat}

The category $Cat$ of small categories possesses a combinatorial, simplicial, Cartesian model structure called the \textit{canonical} (or \textit{folk}) model structure \cite{rezk-folk}. The weak equivalences are the equivalences of categories, and all objects are bifibrant.
Commonly studied operads in this setting include the categorical Barrat-Eccles operad and the $A_\infty$ operad, whose algebras are $A_\infty$-categories. The path object of $Cat$ can be used to construct model structures for these categories of algebras, following \cite{rezk}. Since $Cat$ is simplicial, \$ can easily be verified as we did for topological spaces.

For any colocalization $R_K$ chosen, model structures on algebras over the Barrat-Eccles operad or the $A_\infty$-operad can be constructed as for the stable module category; since all objects are cofibrant there will be no difference between a semi-model structure over $Cat$ and a full model structure. Thus, $\clubsuit$ and $\star^X$ need not be verified, and we will have preservation results over any colored operad that is entrywise $K$-colocal.

A notion of suspension in $Cat$ has recently been invented in \cite{vicinsky-thesis}. It should therefore be possible to define $n$-connected covers as a colocalization, and one can mimic the theory of topological spaces and chain complexes to determine what types of algebraic structure are preserved. Since the Barrat-Eccles operad is contractible, such colocalizations should preserve algebras over it.

Another colocalization in this setting is $A$-cellularization for some category or set of categories $A$, by analogy with $\Top$. The description on pages 37-39 of \cite{vicinsky-thesis} give a hands-on way to do such cellularization, replacing her $T$ by some category $A$. In particular, $A$ can be chosen in such a way that either of the operads above consist of $A$-colocal spaces. 

\begin{remark}
There are several examples similar to $Cat$, where preservation results of this sort could be proven. For example, the (folk) model structure on the category of groupoids and the (folk) model structure on set-valued operads are both combinatorial, Cartesian, and have all objects fibrant. A vast generalization of all three is given by the model structure \cite{everaert-kieboom-linden-internal-cat} of categories internal to some category $\C$. In this cofibrantly generated model structure, all objects are fibrant, and a monoidal product is inherited from $\C$ (just as $Cat$ inherits the Cartesian structure from Set). All of these examples have colocalization functors and operads, so the question of preservation may be studied therein.
\end{remark}


\end{document}